\newtheorem{theorem}{Theorem}[section]
\newtheorem{lemma}{Lemma}[section]
\newtheorem{example}{Example}[section]
\newtheorem{proposition}{Proposition}[section]
\newtheorem{remark}{Remark}[section]
\numberwithin{equation}{section}
\newcommand{\al}{\alpha}
\newcommand{\fy}{\varphi}
\def\Dal{{\partial_t^\al}}
\def\Om{\Omega}
\def\II{(\Om)}
\def\bPtau{\bar\partial_\tau}
\title{An Analysis of the Crank-Nicolson Method for Subdiffusion} 
\author{Bangti Jin\thanks{Department of Computer Science, University College London, Gower Street, London, WC1E 2BT, UK
(\texttt{b.jin@ucl.ac.uk, bangti.jin@gmail.com})}
\and Buyang Li\thanks{ Department of Applied Mathematics, The Hong Kong Polytechnic University, Kowloon, Hong Kong.
(\texttt{libuyang@gmail.com})}
\and Zhi Zhou\thanks{Department of Applied Physics and Applied Mathematics,
Columbia University, 500 W. 120th Street, New York, NY 10027, USA (\texttt{zhizhou0125@gmail.com})}}
\date{\today}
\begin{document}
\maketitle
\begin{abstract}
In this work, we  analyze a Crank-Nicolson type time stepping scheme for
the subdiffusion equation, which involves a Caputo fractional derivative of order $\alpha\in (0,1)$
in time. It hybridizes the backward Euler convolution quadrature with a $\theta$-type method, with
the parameter $\theta$ dependent on the fractional order $\alpha$ by $\theta=\alpha/2$, and
naturally generalizes the classical Crank-Nicolson method. We develop essential initial corrections
at the starting two steps for the Crank-Nicolson scheme, and together with the Galerkin finite element method in space,
obtain a fully discrete scheme. The overall scheme is easy to implement, and robust with respect to data
regularity. A complete error analysis of the fully discrete scheme is provided,
and a second-order accuracy in time is established for both smooth and nonsmooth problem data.
Extensive numerical experiments are provided to illustrate its accuracy, efficiency and robustness,
and a comparative study also indicates its competitive with existing schemes.\\
\textbf{Keywords}: Crank-Nicolson method, subdiffusion, initial correction, error estimates, nonsmooth data, convolution quadrature
\end{abstract}

\section{Introduction}\label{sec:intro}
Let $\Omega$ be a bounded convex polygonal domain in $\mathbb R^d\,(d=1,2,3)$
with a boundary $\partial\Omega$, and $T>0$ be a fixed value.
We are interested in efficient numerical methods for
the following fractional-order evolution equation of
$u(t):(0,T)\rightarrow H^1_0(\Omega)\cap H^2(\Omega)$:
\begin{equation}\label{eqn:fde}
   \Dal u(t) -\Delta u(t)= f(t)\quad \text{for} \,\,\,t\in (0,T),\\
\end{equation}
where $\Delta:H^1_0(\Omega)\cap H^2(\Omega)\rightarrow L^2(\Omega)$ denotes the Laplacian,
$f:(0,T)\rightarrow L^2(\Omega)$ is a given function, and the notation $\Dal u$, $0<\al<1$,
denotes the Caputo fractional derivative of order $\al$ with respect to $t$, defined
by \cite[pp. 91]{KilbasSrivastavaTrujillo:2006}
\begin{equation}\label{McT}
   \Dal u(t):= \frac{1}{\Gamma(1-\al)} \int_0^t(t-s)^{-\al}\frac{d}{ds}u(s)\, ds,
\end{equation}
with $\Gamma(\cdot)$ being the Gamma function defined by
$\Gamma(s):=\int_0^\infty t^{s-1}e^{-t}dt$ for $\Re(s)>0$.
The model \eqref{eqn:fde} is {subject to} a zero boundary condition
$u =0$ on $\partial\Omega\times(0,T]$, and the following initial condition
\begin{equation*}
      u(0)=v,\quad\text{in}\,\,\,\Omega,
\end{equation*}
where $v$ is a given function defined on the domain $\Omega$.
The model \eqref{eqn:fde} with $0<\alpha<1$ is popular for modeling subdiffusion processes, in which the
mean-squared displacement of  particle motion grows only sublinearly with the time $t$, instead
of the linear growth for normal diffusion. It has been applied in several
fields, e.g., solute transport in heterogeneous media and cytoplasmic crowding
in living cells; see \cite{Metzler:2014} for an extensive list.

Recently, there has been much interest in developing efficient numerical schemes for \eqref{eqn:fde}.
A number of time stepping schemes have been proposed, which roughly can be divided into two groups,
i.e., L1 type scheme and convolution quadrature (CQ). L1 type schemes are of finite difference
nature, and can be derived by polynomial interpolation \cite{Diethelm:1997,LinXu:2007,
GaoSunZhang:2014,Alikhanov:2015,CaoZhangKarniadakis:2015,LvXu:2016}.
These schemes were derived under the assumption that the solution $u$ is smooth, and
require high solution regularity for error estimates. See also \cite{McLeanMustapha:2009,MustaphaAbdallahFurati:2014} for discontinuous Galerkin methods. CQ due to \cite{Lubich:1986,Lubich:1988}
presents a flexible framework for devising high-order time stepping schemes for \eqref{eqn:fde},
and merits excellent stability property. Thus
it has been customarily applied \cite{Yuste:2006,CuestaLubichPalencia:2006,JinLazarovZhou:SISC2016,Dimitrov:2014,ZengLiLiuTurner:2015}. For both L1 type and CQ schemes, 
proper corrections are necessary in order to obtain high-order convergence for
general problem data, including very smooth data. However, to the best of our limited knowledge, for problem \eqref{eqn:fde}, so far
this has been only done in \cite{CuestaLubichPalencia:2006} and \cite{JinLazarovZhou:SISC2016} for
CQ generated by the second-order BDF. Hence, it remains imperative to develop and analyze high-order schemes
robust with respect to data regularity.

In this paper, we present an analysis of a robust $O(\tau^2)$ accurate fractional Crank-Nicolson scheme, with finite element
space discretization. Let $\tau$ be the constant time step size and $h$ the mesh size. Using the
time-stepping scheme developed in \cite{Dimitrov:2014} and standard Galerkin finite element method in space,
we propose a fully discrete scheme approximates the solution $u(t_n)$ by $U_h^n$, $n=1,2,\ldots,N$:
\begin{equation}\label{eqn:CN}
    \bar\partial_\tau^\alpha (U_h^n-v_h) - \big(1-\tfrac{\alpha}{2}\big)\Delta_h U_h^{n}
    -\tfrac{\alpha}{2}\Delta_h U_h^{n-1}=\big(1-\tfrac{\alpha}{2}\big)F_h^{n}+\tfrac{\alpha}{2}F_h^{n-1},
\end{equation}
where $\Delta_h:X_h\rightarrow X_h$ denotes the Galerkin approximation of the Laplacian on
a finite element subspace $X_h\subset H^1_0(\Omega)$, $F_h^n=P_hf(t_n)$ denotes the $L^2$-projection of
$f(t_n)$ onto $X_h$, and $v_h\in X_h$ is an approximation to the initial data $v$. In \eqref{eqn:CN},
$\bar\partial_\tau^\alpha \varphi^n$ denotes the backward Euler CQ approximation to the Riemann-Liouville
fractional derivative $^R\kern-.2mm\partial_t^\alpha \varphi(t_n)$ (cf. \eqref{eqn:RL} below) defined by:
\begin{equation}\label{eqn:BE}
  \bar\partial_\tau^\alpha \varphi^n := \tau^{-\alpha}\sum_{j=0}^nb_{n-j}\varphi^j,\quad \mbox{with}\quad
\sum_{j=0}^\infty b_j\xi^j:=(1-\xi)^\alpha,
\end{equation}
where the weights $b_j$ are available in closed form: $b_j=(-1)^j\frac{\Gamma(\alpha+1)}{\Gamma(j+1)\Gamma(\alpha-j+1)}$.
Clearly, for $\alpha=1$, the scheme \eqref{eqn:CN} recovers the classical Crank--Nicolson method \cite{CrankNicolson:1947}, and
thus it represents a natural extension of the latter to the fractional case, which has long been missing in the literature.
For $\alpha\in(0,1)$, the scheme \eqref{eqn:CN} hybridizes the backward Euler CQ with the $\theta$-type method
with a weight $\theta=\alpha/2$. This choice was motivated by the fact that it yields a
local truncation error $O(\tau^2)$ under certain compatibility conditions; see
Section \ref{subsec:fully} for details.
The numerical experiments therein show that it is indeed second-order accurate in time
if the solution $u$ is sufficiently smooth.

However, the solution $u$ of problem \eqref{eqn:fde} can be weakly singular near $t=0$ even for smooth
problem data \cite{SakamotoYamamoto:2011,Stynes:2016}, and thus a straightforward implementation of
\eqref{eqn:CN} yields only an $O(\tau)$ convergence, cf. Table \ref{tab:1st}, as for other
high-order time stepping schemes. 
Inspired by \cite{CuestaLubichPalencia:2006}, we shall correct it at the starting two steps, leading to
a novel corrected scheme, cf. \eqref{eqn:fully-mod} below. The new scheme has two
distinct features, which make it very attractive. (i) Since it employs the backward Euler CQ and
changes only the first two steps, it is
straightforward to implement. (ii) It is robust with respect to data regularity: it can achieve an $O(\tau^2)$
convergence in time for nonsmooth initial data $v$ and source term $f$ incompatible with
$v$ at $t=0$ (cf. Theorems \ref{thm:v-nonsmooth} and \ref{thm:inhomog-fully}).
Our numerical experiments in Section \ref{sec:numerics} fully confirm its accuracy and robustness.

The contributions of the work are threefold. First, we develop essential initial corrections
for the scheme \eqref{eqn:CN} in order to restore the $O(\tau^2)$ accuracy for nonsmooth data.
It presents a new robust second-order scheme for \eqref{eqn:fde}, competitive with
the corrected second-order BDF.
Second, we provide a complete convergence analysis of the corrected scheme under realistic
regularity conditions on the data. For example, for $v\in L^2(\Omega)$ and $f\equiv0$, we show in Theorem
\ref{thm:v-nonsmooth} the following error estimate
\begin{equation*}
   \| u_h(t_n)-U_h^n \|_{L^2(\Omega)} \le c\tau^2 t_n^{-2} \| v \|_{L^2(\Omega)}, \quad n \ge 1,
\end{equation*}
where $u_h$ is the semidiscrete Galerkin solution, cf. \eqref{eqn:fem}.
Some preliminary analysis of the scheme \eqref{eqn:CN} was given in \cite[Theorem 1]{Dimitrov:2014} under
high regularity assumption on the solution, i.e., $u\in C^4[0,T]$, and restrictive compatibility condition
$u^{(i)}(0)=0$, $i=0,1,2$. We shall derive
optimal error estimates that are directly expressed in terms of data regularity. As a by-product,
we also give the guideline for constructing initial corrections for other schemes, cf. Remark \ref{rem:correct}. Third, the proof relies
on the discrete Laplace transform and a refined analysis of the kernel function,
largely inspired by a strategy outlined in \cite{LubichSloanThomee:1996}. Due to the hybridization
of the $\theta$ method with the backward Euler CQ, the scheme lacks a simple convolution structure,
leading to a complex kernel, and
is challenging to analyze. We shall develop a general strategy in Lemma \ref{lem:est-hom-2} to
overcome the challenge. Thus the convergence analysis
differs substantially from existing works \cite{CuestaLubichPalencia:2006,JinLazarovZhou:SISC2016},
for which the requisite basic estimates on the kernel are well known.

The rest of the paper is organized as follows. In Section \ref{sec:CN} we rederive
the scheme \eqref{eqn:CN} and develop the initial corrections. Then in Section \ref{sec:conv},
we present a complete convergence analysis of the corrected scheme. The focus is
on the time discretization error, since the error analysis of the semidiscrete Galerkin scheme
is well understood. Last, in Section \ref{sec:numerics} we present extensive
numerical experiments to confirm the convergence rates for both smooth and nonsmooth problem data,
where a comparative study with CQ generated by the second-order BDF, cf.
\cite{JinLazarovZhou:SISC2016}, and the L1-2 scheme, cf. \cite{GaoSunZhang:2014}, also shows clearly
its competitiveness. Throughout, the notation $c$, with or without a subscript,
denotes a generic constant which may differ at different occurrences, but it is
always independent of the mesh size $h$ and time step size $\tau$.

\section{The fractional Crank-Nicolson scheme}\label{sec:CN}
In this part, we derive a fully discrete scheme for problem \eqref{eqn:fde} using a
standard Galerkin FEM in space and the fractional Crank-Nicolson approximation in time.

\subsection{Semidiscrete Galerkin scheme}
Let $\mathcal{T}_h$ be a shape regular, quasi-uniform triangulation of the
domain $\Omega $ into $d$-simplexes, denoted by $T$, with a mesh size $h$. Then
over $\mathcal{T}_h$, we define a continuous piecewise linear finite
element space $X_h$ by
\begin{equation*}
  X_h= \left\{v_h\in H_0^1(\Omega):\ v_h|_T \mbox{ is a linear function},\ \forall\, T \in \mathcal{T}_h\right\}.
\end{equation*}
We define the $L^2(\Omega)$-projection $P_h:L^2(\Omega)\to X_h$ and
the Ritz projection $R_h:H_0^1(\Omega)\to X_h$ by
\begin{equation*}
   \begin{aligned}
     (P_h \fy,\chi_h) &=(\fy,\chi_h) , &&\forall\, \chi_h\in X_h,\\
     (\nabla R_h\fy,\nabla\chi_h) & = (\nabla \fy,\nabla \chi_h),&& \forall\, \chi_h\in X_h,
   \end{aligned}
\end{equation*}
respectively, where $(\cdot,\cdot)$ denotes the inner product of $L^2(\Omega)$. Then the spatially
semidiscrete Galerkin FEM scheme for problem \eqref{eqn:fde} reads: find $u_h(t)\in X_h$ such that
\begin{equation}\label{eqn:fem}
  (\Dal u_h,\chi_h) + (\nabla u_h,\nabla \chi_h) = (f,\chi_h),\quad\forall\, \chi_h\in X_h,
\end{equation}
with the initial condition $u_h(0)=v_h\in X_h$.
The choice $v_h$ depends on the smoothness of the initial data $v$ (\cite{Thomee:2006}):
for $v\in D(\Delta)=H_0^1(\Omega)\cap H^2(\Omega)$, we take $v_h=R_h v $,
and for $v\in L^2(\Omega)$, we take $v_h=P_hv $.
Upon introducing the discrete Laplacian $\Delta_h: X_h\to X_h$, defined by
$-(\Delta_h\fy_h,\chi_h)=(\nabla\fy_h,\nabla\chi_h)$ for all $\fy_h, \chi_h\in X_h,$
we can rewrite \eqref{eqn:fem} as: with $u_h(0)=v_h\in X_h$ and $f_h(t)=P_hf(t)$
\begin{equation}\label{eqn:fdesemidis}
  \Dal u_h(t) -\Delta_h u_h(t) = f_h(t) , \quad \forall\, t>0.
\end{equation}

The semidiscrete scheme \eqref{eqn:fdesemidis} has been analyzed in
\cite{JinLazarovZhou:SIAM2013,JinLazarovPasciakZhou:IMA2014,LeMcLeanLamichhane:2016,KaraaMustapha:2016}, and we refer interested readers to these works for detailed error estimates.

\subsection{Formal derivation of the fractional Crank-Nicolson scheme}\label{subsec:fully}

In this part, we formally derive the fractional Crank-Nicolson scheme \eqref{eqn:CN}. Upon recalling the
defining relation of the Caputo derivative $\partial_t^\alpha \varphi$
in terms of the Riemann-Liouville one \cite[pp. 91, eq. (2.4.1)]{KilbasSrivastavaTrujillo:2006}, i.e.,
$\partial_t^\alpha\varphi(t)={^R\kern-.2mm \partial_t^\alpha}(\varphi(t)-\varphi(0))$, where the
Riemann-Liouville fractional derivative $^R\kern-.2mm\Dal \varphi(t)$, for $0<\alpha<1$,
is defined by: 
\begin{equation}\label{eqn:RL}
  ^R\kern-.2mm\Dal \varphi(t) = \frac{1}{\Gamma(1-\alpha)}\frac{d}{dt}\int_0^t(t-s)^{-\alpha}\varphi(s)ds.
\end{equation}
we can rewrite the
scheme \eqref{eqn:fdesemidis} into
\begin{equation*}
    ^R\kern-.2mm\Dal (u_h(t)-v_h) -\Delta_h u_h(t) = f_h(t),
\end{equation*}

Now consider a uniform partition of $[0,T]$ with time
step size $\tau = T/N$, $N\in\mathbb{N}$, so that $0=t_0<t_1<\ldots<t_N=T$, and $t_n=n
\tau$, $n=0,\ldots,N$. The Riemann-Liouville derivative $^R\kern-.2mm\Dal \varphi(t_n)$ can be
discretized using the backward Euler CQ \eqref{eqn:BE}, and it is $O(\tau)$
accurate \cite[pp. 204-208]{Podlubny:1988}.
In order to achieve an $O(\tau^2)$ accuracy, we aim at deriving a $\theta$-type method with a
suitable weight $\theta$ by Fourier transform. We denote by ${\mathcal F}_t$  the Fourier
transform in  $t$ and by ${\mathcal F}_\xi^{-1}$ the inverse Fourier
transform in $\xi$. Assuming that the function $\varphi$ is smooth over the domain
${\mathbb R}$ and $\varphi=0$ for $t\le 0$, then the function
$$
\bar\partial_\tau^\alpha \varphi(t):=\tau^{-\alpha}\sum_{j=0}^\infty  b_j\varphi(t-j\tau)
$$
coincides with the scheme \eqref{eqn:BE} at $t=t_n$ and satisfies
\begin{align*}
\begin{aligned}
{\mathcal F}_t [\bar\partial_\tau^\alpha \varphi(t)](\xi) & =
\int_{{\mathbb R}}\bar\partial_\tau^\alpha \varphi(t)e^{-{\rm i}t\xi} d t
=\tau^{-\alpha}\sum_{j=0}^\infty \int_{{\mathbb R}} b_j\varphi(t-j\tau)e^{-{\rm i}t\xi} d t  \\
&=\tau^{-\alpha}(1-e^{-{\rm i}\tau\xi})^\alpha {\mathcal F} \varphi(\xi)
=({\rm i}\xi)^\alpha(1-\tfrac{{\rm i}\alpha}{2}\tau\xi+O(\tau^2\xi^2))
{\mathcal F} \varphi(\xi) .
\end{aligned}
\end{align*}
In view of the identity
\begin{align*}
&{\mathcal F}_t [{^R\kern-.2mm\partial_t^\alpha} \varphi(t-s)](\xi)
=({\rm i}\xi)^\alpha e^{-{\rm i}s\xi} {\mathcal F} \varphi(\xi)  =
({\rm i}\xi)^\alpha\big(1- {\rm i}s \xi+O({s^2\xi^2})\big)
 {\mathcal F} \varphi(\xi) ,
\end{align*}
and by the choice $s=\alpha\tau/2$, formally we derive
\begin{align*}
\begin{aligned}
\bar\partial_\tau^\alpha \varphi(t)&={^R\kern-.2mm\partial_t^\alpha} \varphi(t-\alpha\tau/2)
+{\mathcal F}_\xi^{-1}[O(\tau^2\xi^2) ({\rm i}\xi)^\alpha  {\mathcal F} \varphi(\xi) ] \\
&={^R\kern-.2mm}\partial_t^\alpha \varphi(t-\alpha\tau/2)+O(\tau^2) \\
&=(1-\tfrac{\alpha}{2}){^R\kern-.2mm}\partial_t^\alpha \varphi(t)
+ \tfrac{\alpha}{2}{^R\kern-.2mm}\partial_t^\alpha \varphi(t-\tau)+O(\tau^2).
\end{aligned}
\end{align*}
By choosing $t=t_n$ in the preceding expression, it intuitively motivates the scheme \eqref{eqn:CN}.

\begin{remark}
The zero extension to $t<0$ in the formal derivation implicitly imposes certain compatibility conditions at
$t=0$, i.e., $u^{(i)}=0$, $i=0,1,2$.
The estimate $\bar\partial_\tau^\alpha \varphi(t) = {^R\kern-.2mm}\partial_t^\alpha \varphi(t-\frac{\alpha}{2}\tau)+O(\tau^2)$
was first observed in \cite[Theorem 1]{Dimitrov:2014}. It implies that despite the $O(\tau)$ convergence at the node $t_n$, the
approximation $\bar\partial_\tau^\alpha \varphi^n$ is $O(\tau^2)$ accurate at the point $t=t_n-\frac{\alpha}{2}\tau$.
\end{remark}

We illustrate the scheme \eqref{eqn:CN} with one-dimensional numerical examples.
\begin{example}\label{exam:1d}
Consider problem \eqref{eqn:fde} on the unit interval $\Omega=(0,1)$ with $T=1$.
\begin{itemize}
  \item[$\mathrm{(a)}$]$v=0$, and $ f=2t^{2-\al}x(1-x)/\Gamma(3-\al)+2t^2 $. The exact solution $u(x,t)=t^2 x(1-x)$ is smooth.
  \item[$\mathrm{(b)}$] $v=x(1-x)$, and $f=0$.
\end{itemize}
The mesh size $h$ is fixed at $h = 10^{-4}$ so
that the error incurred by spatial discretization is negligible.
\end{example}

Since the exact solution $u$ is smooth and satisfies the compatibility condition in case \ref{exam:1d}(a), the scheme \eqref{eqn:CN}
exhibits an $O(\tau^2)$ rate as expected, cf. Table \ref{tab:smooth_sol},
where the $L^2$ error denotes $\|u_h(t_N)-U_h^N\|_{L^2(\Omega)}$. Generally, the solution of problem
\eqref{eqn:fde} is weakly singular in time near $t=0$, even for smooth problem data.
Thus, a direct implementation of \eqref{eqn:CN} can fail to achieve the
desired rate. Even though the initial data in case \ref{exam:1d}(b) is smooth, the solution $u$ does
not have the requisite temporal regularity, giving only an $O(\tau)$ convergence,
cf. Table \ref{tab:1st}. Nonetheless, with proper corrections at initial
steps to be described below, one can restore the desired $O(\tau^2)$ rate, cf. Table \ref{tab:2nd}.

\begin{table}[htb!]
\caption{The $L^2$ error for Example \ref{exam:1d}(a) at $t_N=1$, by the scheme \eqref{eqn:CN}.}
\label{tab:smooth_sol}
\centering
     \begin{tabular}{|c|cccccc|c|}
     \hline
      $\alpha\backslash N$  &$10$ &$20$ &$40$ & $80$ & $160$ &$320$ &rate \\
            \hline
     $\al=0.1$ &6.22e-6 &1.55e-6 &3.88e-7 &9.67e-8 &2.41e-8 &5.87e-9 &$\approx$ 2.01 \\
     $\al=0.5$ &1.52e-5 &3.79e-6 &9.47e-7 &2.37e-7 &5.93e-8 &1.50e-8 &$\approx$ 2.00 \\
     $\al=0.9$   &3.84e-6 &9.63e-7 &2.42e-7 &6.08e-8 &1.54e-8 &3.95e-9 &$\approx$ 1.98 \\
      \hline
     \end{tabular}
\end{table}

\begin{table}[htb!]
\caption{The $L^2$ error for Example \ref{exam:1d}(b) at $t_N=1$, by the scheme \eqref{eqn:CN}.}
\label{tab:1st}
\centering
     \begin{tabular}{|c|cccccc|c|}
     \hline
      $\alpha\backslash N$  &$40$ &$80$ &$160$ & $320$ & $640$ &$1280$ &rate \\
            \hline
     $\al=0.1$ &1.63e-5 &8.15e-6 &4.07e-6 &2.04e-6 &1.02e-6 &5.09e-7 &$\approx$ 1.00 \\
     $\al=0.5$ &3.13e-5 &1.58e-5 &7.97e-6 &4.00e-6 &2.00e-6 &1.00e-6 &$\approx$ 0.99 \\
     $\al=0.9$   &2.04e-6 &1.35e-6 &7.55e-7 &3.98e-7 &2.05e-7&1.03e-7 &$\approx$ 0.96 \\
      \hline
     \end{tabular}
\end{table}

\begin{table}[htb!]
\caption{The $L^2$ error for Example \ref{exam:1d}(b) at $t_N=1$,
by the corrected scheme \eqref{eqn:fully-mod}.}
\label{tab:2nd}
\centering
     \begin{tabular}{|c|cccccc|c|}
     \hline
      $\alpha\backslash N$  &$10$ &$20$ &$40$ & $80$ & $160$ &$320$ &rate \\
     \hline
     $\al=0.1$ &7.70e-6 &1.80e-6   &4.37e-7 &1.08e-8 &2.66e-8 &6.60e-9 &$\approx$ 2.01 \\
     $\al=0.5$ &4.24e-5 &9.91e-6 &2.40e-6 &5.89e-7 &1.46e-7 &3.61e-8 &$\approx$ 2.02 \\
     $\al=0.9$   &4.62e-5 &9.92e-6 &2.39e-6 &5.86e-7 &1.45e-7 &3.59e-8 &$\approx$ 2.03 \\
      \hline
     \end{tabular}
\end{table}

It is well known that if $\varphi(0)\neq0$, uncorrected high-order CQs can achieve only
an $O(\tau)$ rate, which is also the case for \eqref{eqn:CN}. Inspired by \cite{CuestaLubichPalencia:2006},
we correct the scheme \eqref{eqn:CN} as follows. To derive the correction,
we define $\tilde f_h(t):=f_h(t)-f_h(0)$ and
rewrite  \eqref{eqn:fdesemidis} into
\begin{equation*}
  \begin{aligned}
    {^R\kern-.2mm\Dal} (u_h(t)-v_h) & = \Delta_h(u_h(t)-v_h) + \Delta_h v_h + \tilde{f}_h(t) + f_h(0)\\
       & = \Delta_h(u_h(t)-v_h) + \partial_t\partial_t^{-1} \Delta_hv_h + \tilde{f}_h(t) + \partial_t\partial_t^{-1}f_h(0).
  \end{aligned}
\end{equation*}
Next we apply \eqref{eqn:CN} and approximate $\partial_t\partial_t^{-1}
$by $\tilde\partial_\tau\partial_t^{-1}$, where $\tilde\partial_\tau$ denotes the second-order BDF, i.e.,
\begin{equation*}
  \begin{aligned}
  \bar\partial_\tau^\alpha (U_h-v_h)^n & = (1-\tfrac{\alpha}{2})(\Delta_h(U_h-v_h) +\tilde f_h)^n  + \tfrac{\alpha}{2}(\Delta_h(U_h-v_h) +\tilde f_h)^{n-1} \\
   & \ \ + (1-\tfrac{\alpha}{2})(\tilde\partial_\tau\partial_t^{-1} (\Delta_hv_h  +f_h(0)))^n + \tfrac{\alpha}{2}(\tilde\partial_\tau\partial_t^{-1} (\Delta_hv_h  +f_h(0)))^{n-1}.
  \end{aligned}
\end{equation*}
The purpose of keeping $\partial_t^{-1}$ intact in the discretization and using the approximation $\tilde\partial_\tau\partial_t^{-1}$
instead of $\bar\partial_\tau\partial_t^{-1}$ is to maintain the desired $O(\tau^2)$ accuracy. Letting
$1_\tau=(0,{3}/{2},1,1,\ldots)$, then $\tilde\partial_\tau\partial_t^{-1}1=1_\tau$ on the grid points
$t_n$ \cite[Section 3]{CuestaLubichPalencia:2006}, the scheme is given explicitly by
\begin{equation}\label{eqn:fully-mod}
\begin{split}
   \bPtau^\alpha (U_h-v_h) ^1 - (1-\tfrac\alpha2)\Delta_h U_h^1 -  (\tfrac12-\tfrac\alpha4) \Delta_h v_h&= 
   (1-\tfrac\alpha2) (F_h^1+\tfrac12F_h^0),\\
   \bPtau^\alpha (U_h-v_h)^2  - (1-\tfrac\alpha2)\Delta_hU_h^2 - \tfrac\alpha2 \Delta_hU_h^1  -\tfrac\alpha4\Delta_h v_h&= 
   (1-\tfrac\alpha2)F_h^2+\tfrac\alpha2F_h^1 + \tfrac\alpha4 F_h^0,\\
   \bPtau^\alpha (U_h-v_h)^n   - (1-\tfrac\alpha2)\Delta_hU_h^n -  \tfrac\alpha2 \Delta_h U_h^{n-1} &= 
    (1-\tfrac\alpha2) F_h^n + \tfrac\alpha2 F_h^{n-1} , \quad 3\le n\le N.
\end{split}
\end{equation}
It is noteworthy that the correction only changes the first two steps.


\section{Convergence analysis}\label{sec:conv}

Now we analyze the corrected scheme \eqref{eqn:fully-mod},
and discuss homogeneous and inhomogeneous problems separately, and
focus on the temporal error.

\subsection{Solution representations}

The convergence analysis relies crucially on the integral representations of the
semidiscrete Galerkin solution $w_h(t):=u_h(t)-v_h$ and fully discrete solution $W_h^n:=U_h^n-v_h$.
First, we derive a representation of the solution $w_h(t)$
by means of Laplace transform. Clearly, the function  $w_h(t)$ satisfies
\begin{equation*}
    \Dal w_h   -\Delta_hw_h = \Delta_h v_h + f_h,
\end{equation*}
with $w_h(0)=0$. Upon Laplace transform, denoted by $~~\widehat{}~~$, and using the formula
$\widehat{\partial_t^\alpha \varphi}=z^\alpha \widehat{\varphi}-z^{\alpha-1}\varphi(0)$
\cite[Lemma 2.24, pp. 98]{KilbasSrivastavaTrujillo:2006}, we obtain
\begin{equation*}
    z^\al \widehat w_h(z)  -\Delta_h\widehat w_h(z) = z^{-1} \Delta_hv_h + \widehat f_h(z).
\end{equation*}
By inverse Laplace transform, the function $w_h(t)$ can be represented by
\begin{equation}\label{eqn:semisol}
  w_h(t)=\frac{1}{2\pi \mathrm{i}}\int_{\Gamma_{\theta,\delta}}
  e^{zt}\left(- K(z) \Delta_h v_h - zK(z) \widehat f_h(z)\right) dz,
\end{equation}
with the kernel function
\begin{equation}\label{eqn:kernel}
   K(z)= -z^{-1}(z^\al    -\Delta_h)^{-1}.
\end{equation}
In the representation \eqref{eqn:semisol}, the contour $\Gamma_{\theta,\delta}$ is defined by
\begin{equation*}
  \Gamma_{\theta,\delta}=\left\{z\in \mathbb{C}: |z|=\delta, |\arg z|\le \theta\right\}\cup
  \{z\in \mathbb{C}: z=\rho e^{\pm {\rm i}\theta}, \rho\ge \delta\},
\end{equation*}
oriented with an increasing imaginary part. Throughout, we choose the angle $\theta\in(\pi/2 , \pi)$.
Since the discrete Laplacian operator $\Delta_h$ satisfies the following resolvent estimate
\cite[Chapter 6]{Thomee:2006}, \cite[Example 3.7.5 and Theorem 3.7.11]{ABHN}
\begin{equation}\label{Deltah-resolvent}
  \| (z -\Delta_h)^{-1} \| \le c z^{-1},  \quad \forall z \in \Sigma_{\theta},
\end{equation}
there exists a constant $c$ which depends only on $\theta$ and $\al$ such that
\begin{equation}\label{eqn:resol}
  \| (z^{\al}-\Delta_h)^{-1} \| \le cz^{-\al},  \quad \forall z \in \Sigma_{\theta}.
\end{equation}

Next, we derive a representation of  $W_h^n$
by means of discrete Laplace transform, i.e., generating function. Recall that for a given sequence
$(f^n)_{n=0}^\infty$, the generating function $\widetilde f(\xi)$ is defined by
$
  \widetilde f(\xi) := \sum_{n=0}^\infty f^n\xi^n.
$
Then we have the following solution representation.

\begin{proposition}\label{lem:solurep}
Let $K(z)$ be given by \eqref{eqn:kernel} and $G_h^n: =  F_h^n-F_h^0$. Then,
there exists a $\delta_0\in(0,{\pi}/{2})$ {\rm(}independent of $\tau${\rm)} such that for $\delta\in(0,\delta_0]$ and $\theta\in({\pi}/{2},{\pi}/{2}+\delta_0]$,
the fully discrete solution $W_h^n:=U_h^n-v_h$ can be represented by
\begin{equation}\label{Rep-Wh}
  W_h^n= \frac{1}{2\pi\mathrm{i} }\int_{\Gamma^\tau_{\theta,\delta}}e^{zt_{n}}
  \left(\mu(e^{-z\tau}) K( \beta_\tau(e^{-z\tau}))(-\Delta_h v_h- F_h^0) -  \beta_\tau(e^{-z\tau})K( \beta_\tau(e^{-z\tau}))\widetilde G_h(e^{-z\tau})\tau  \right)\,dz,
\end{equation}
with the contour {\rm(}oriented with an increasing imaginary part{\rm)} defined by
$
\Gamma_{\theta,\delta}^\tau :=\{ z\in \Gamma_{\theta,\delta}:|\Im(z)|\le {\pi}/{\tau} \} .
$
The functions $\beta_\tau(\xi)$ and $\mu(\xi) $ are, respectively, given by
\begin{equation}\label{eqn:fun0}
   \beta_\tau(\xi) = \frac{1-\xi}{\tau(1-\tfrac\al2+\tfrac\al2\xi)^{1/\al}}\qquad\text{and}\qquad\mu(\xi)= \frac{3\xi-\xi^2}{2(1-\tfrac\al2+\tfrac\al2\xi)^{1/\al}}.
\end{equation}
\end{proposition}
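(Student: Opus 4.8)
The plan is to apply the discrete Laplace transform (generating function) to the recursion \eqref{eqn:fully-mod} and then invert it by a contour integral over $\Gamma_{\theta,\delta}^\tau$. First I would introduce the sequences $W_h^n := U_h^n - v_h$ and form the generating function $\widetilde W_h(\xi) = \sum_{n\ge 1} W_h^n \xi^n$ (with $W_h^0 = 0$). The key point is that the scheme \eqref{eqn:fully-mod} is designed so that, even though its first two steps differ from the general step $n\ge 3$, the differences are exactly accounted for by the correction terms. Concretely, I would show that the corrected recursion can be written uniformly, for all $n\ge 1$, as
\begin{equation*}
  \bar\partial_\tau^\alpha W_h^n - (1-\tfrac\alpha2)\Delta_h W_h^n - \tfrac\alpha2 \Delta_h W_h^{n-1}
  = (1-\tfrac\alpha2)(\Delta_h v_h + F_h^0) + \tfrac\alpha2 (\Delta_h v_h + F_h^0) + (1-\tfrac\alpha2) G_h^n + \tfrac\alpha2 G_h^{n-1} + r^n,
\end{equation*}
where $G_h^n = F_h^n - F_h^0$ and the correction sequence $r^n$ is supported on $n=1,2$ and chosen to encode the $\tilde\partial_\tau\partial_t^{-1}$ discretization, i.e. it reproduces the sequence $1_\tau = (0, 3/2, 1, 1, \ldots)$ applied to $\Delta_h v_h + F_h^0$. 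This is the bookkeeping step: one has to verify that the first two lines of \eqref{eqn:fully-mod} are precisely this uniform recursion with the stated $r^n$.

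Next I would multiply by $\xi^n$ and sum over $n\ge 1$. Using the convolution identity $\sum_{n\ge 0}(\sum_{j=0}^n b_{n-j} W_h^j)\xi^n = (1-\xi)^\alpha \widetilde W_h(\xi)$ for the backward Euler CQ (from \eqref{eqn:BE}), together with $\sum_{n\ge 1}\Delta_h W_h^{n-1}\xi^n = \xi \Delta_h \widetilde W_h(\xi)$, the left-hand side collapses to
\begin{equation*}
  \Big(\tau^{-\alpha}(1-\xi)^\alpha - (1-\tfrac\alpha2 + \tfrac\alpha2\xi)\Delta_h\Big)\widetilde W_h(\xi).
\end{equation*}
Dividing through by $(1-\tfrac\alpha2+\tfrac\alpha2\xi)$ and recognizing that $\tau^{-\alpha}(1-\xi)^\alpha/(1-\tfrac\alpha2+\tfrac\alpha2\xi) = \beta_\tau(\xi)^\alpha$ with $\beta_\tau$ as in \eqref{eqn:fun0}, the operator factor becomes $\beta_\tau(\xi)^\alpha - \Delta_h$, whose inverse is $-\beta_\tau(\xi) K(\beta_\tau(\xi))$ by the definition \eqref{eqn:kernel} of $K$. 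On the right-hand side, the constant-in-$n$ term $(\Delta_h v_h + F_h^0)$ contributes $(1+\xi)\cdot\frac{\xi}{1-\xi}$-type factors through $\sum_{n\ge1}\xi^n$ and the shift, which combine with the correction $\sum r^n\xi^n$ (the generating function of $1_\tau$, namely $\tfrac{3}{2}\xi + \xi^2 + \xi^3 + \cdots - \tfrac12\xi^2 = \tfrac{\xi(1-\tfrac12\xi)}{1-\xi}\cdot\text{(appropriate combination)}$) to produce exactly the factor $\mu(\xi)/(1-\tfrac\alpha2+\tfrac\alpha2\xi)\cdot(\beta_\tau(\xi))^{-\alpha}\cdot(\ldots)$; after simplification one reads off $\mu(\xi)$ as defined in \eqref{eqn:fun0} multiplying $(-\Delta_h v_h - F_h^0)$. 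Similarly the source terms $(1-\tfrac\alpha2)G_h^n + \tfrac\alpha2 G_h^{n-1}$ generate $(1-\tfrac\alpha2+\tfrac\alpha2\xi)\widetilde G_h(\xi)$, which cancels the denominator and leaves $-\beta_\tau(\xi)K(\beta_\tau(\xi))\widetilde G_h(\xi)$; the extra factor $\tau$ in \eqref{Rep-Wh} comes from the change of variables below. Thus $\widetilde W_h(\xi) = \mu(\xi)K(\beta_\tau(\xi))(-\Delta_h v_h - F_h^0) - \beta_\tau(\xi)K(\beta_\tau(\xi))\widetilde G_h(\xi)$.

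Finally I would recover $W_h^n$ by Cauchy's integral formula, $W_h^n = \frac{1}{2\pi\mathrm{i}}\int_{|\xi|=\rho} \xi^{-n-1}\widetilde W_h(\xi)\,d\xi$ for suitable $\rho<1$, and then substitute $\xi = e^{-z\tau}$, which maps the circle $|\xi|=\rho = e^{-\delta\tau}$ (traversed once) onto the truncated contour $\Gamma_{\theta,\delta}^\tau$ with $d\xi = -\tau e^{-z\tau}dz$; the factor $\xi^{-n-1}d\xi = -\tau e^{z\tau(n+1)}e^{-z\tau}dz = -\tau e^{zt_n}dz$, producing the $e^{zt_n}$ and the $\tau$ in \eqref{Rep-Wh}. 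I need to check that $\widetilde W_h$, hence the integrand, is analytic in a neighborhood of the relevant region: this requires $\beta_\tau(e^{-z\tau})^\alpha$ to stay in the sector $\Sigma_\theta$ where the resolvent bound \eqref{eqn:resol} holds, and this is exactly where the constant $\delta_0$ and the restriction $\theta\in(\pi/2,\pi/2+\delta_0]$ enter — one shows that for $|z|$ small on the contour, $\tau\beta_\tau(e^{-z\tau})$ behaves like $z(1 + O(\tau))$ up to an admissible rotation, keeping its argument within $\theta$. I expect this analyticity/sector verification for $\beta_\tau$ and $\mu$ — establishing the existence of $\delta_0$ independent of $\tau$ — to be the main technical obstacle; the algebraic manipulation of generating functions, while somewhat lengthy, is routine once the uniform form of the recursion is in hand.
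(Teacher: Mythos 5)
Your overall route is the same as the paper's: form the generating function of $W_h^n$, use the CQ convolution rule and the algebra of the corrected first two steps to solve for $\widetilde W_h(\xi)$, then invert by Cauchy's formula and the substitution $\xi=e^{-z\tau}$. The algebraic part is essentially right (with one bookkeeping slip: solving $(\beta_\tau(\xi)^\alpha-\Delta_h)\widetilde W_h(\xi)=\kappa(\xi)(\Delta_h v_h+F_h^0)+\widetilde G_h(\xi)$ with $\kappa(\xi)=\tfrac{3\xi-\xi^2}{2(1-\xi)}$ gives $\widetilde W_h(\xi)=\tau^{-1}\mu(\xi)K(\beta_\tau(\xi))(-\Delta_h v_h-F_h^0)-\beta_\tau(\xi)K(\beta_\tau(\xi))\widetilde G_h(\xi)$; your version drops the $\tau^{-1}$ on the first term, so after the change of variables both terms would carry a factor $\tau$, which is inconsistent with \eqref{Rep-Wh}).

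The genuine gap is in the last step. The substitution $\xi=e^{-z\tau}$ does \emph{not} map the circle $|\xi|=\varrho$ onto $\Gamma_{\theta,\delta}^\tau$: its image is the vertical segment $\Gamma^\tau=\{z=-\ln(\varrho)/\tau+\mathrm{i}y,\ |y|\le\pi/\tau\}$. To arrive at the stated representation over $\Gamma_{\theta,\delta}^\tau$ you must deform the contour from $\Gamma^\tau$ to $\Gamma_{\theta,\delta}^\tau$ inside the strip $|\Im z|\le\pi/\tau$, and this requires two ingredients you do not supply: (i) analyticity of $z\mapsto(\beta_\tau(e^{-z\tau})^\alpha-\Delta_h)^{-1}$ (together with $\kappa(e^{-z\tau})$ and $\widetilde G_h(e^{-z\tau})$) in the region enclosed by $\Gamma^\tau$, $\Gamma_{\theta,\delta}^\tau$ and the lines $\Im z=\pm\pi/\tau$ — this is precisely the sector mapping property $\beta_\tau(e^{-z\tau})^\alpha\in\Sigma_\phi$ of Lemma \ref{lem:est-hom-2}, which is where $\delta_0$ and $\theta\in(\pi/2,\pi/2+\delta_0]$ actually enter (you correctly flag this sector verification as the main obstacle, but you attach it to the wrong geometric step); and (ii) cancellation of the integrals along the two horizontal segments $\Im z=\pm\pi/\tau$, which holds because $e^{-z\tau}$, hence the whole integrand, is $2\pi\mathrm{i}/\tau$-periodic — without this observation Cauchy's theorem alone does not close the argument. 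A further small technical point the paper handles and you omit: one redefines $F_h^n:=F_h^0$ for $n>N$ (harmless for $W_h^1,\dots,W_h^N$) so that $\widetilde G_h(\xi)$ is a finite sum and the analyticity of $\widetilde W_h$ used in both the Cauchy formula and the deformation is legitimate.
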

\begin{proof}
It follows from the scheme \eqref{eqn:fully-mod} that the function $W_h^n$ satisfies
\begin{equation}\label{Eq_Wnh}
\begin{split}
  \bPtau^\alpha W_h^1   -(1-\tfrac\alpha2)\Delta_hW_h^1 -  (\tfrac32-\tfrac34\alpha) \Delta_h v_h &= (1-\tfrac\alpha2) (F_h^1+\tfrac12F_h^0),\\
   \bPtau^\alpha W_h^2 - (1-\tfrac\alpha2)\Delta_hW_h^2  -\tfrac\alpha2\Delta_hW_h^1-(1+\tfrac\alpha4) \Delta_h v_h&= (1-\tfrac\alpha2)F_h^2+\tfrac\alpha2F_h^1 + \tfrac\alpha4 F_h^0,\\
  \bPtau^\alpha W_h^n  - (1-\tfrac\alpha2)\Delta_hW_h^n    -\tfrac\alpha2 \Delta_h W_h^{n-1}  -\Delta_h v_h&=  (1-\tfrac\alpha2) F_h^n + \tfrac\alpha2 F_h^{n-1} , \quad 3\le n\le N.
\end{split}
\end{equation}
with $W_h^0=0$. By multiplying both sides by $\xi^n$ and summing up the results for $n=1,2,\dots$, we obtain
\begin{equation}\label{Eq_Wnh-0}
  \begin{aligned}
     &\sum_{n=1}^\infty \xi^n \bPtau^\alpha W_h^n - \sum_{n=1}^\infty( (1-\tfrac\alpha2)\Delta_hW_h^n +\tfrac\alpha2\Delta_h W_n^{n-1})\xi^n-\Delta_hv_h\bigg(\sum_{n=1}^\infty \xi^n +(\tfrac12-\tfrac{3\alpha}{4})\xi+\tfrac\alpha4\xi^2\bigg) \\
   &= \sum_{n=1}^\infty\left((1-\tfrac\alpha2)F_h^n+\tfrac\alpha2F_h^{n-1} \right)\xi^n + \left((\tfrac{1}{2}-\tfrac{3\alpha}{4})\xi  + \tfrac{\alpha}{4}\xi^2\right)F_h^0.
  \end{aligned}
\end{equation}
Next we simplify the summations. Since $W_h^0=0$, by the discrete convolution rule, we deduce
\begin{equation*}
  \begin{aligned}
  \sum_{n=1}^\infty \xi^n\bPtau^\alpha W_h^n &= \sum_{n=0}^\infty \xi^n\bPtau^\alpha W_h^n = \tau^{-\alpha}(1-\xi)^\alpha\widetilde{W}_h(\xi),\\
  \sum_{n=1}^\infty( (1-\tfrac\alpha2)\Delta_hW_h^n &+\tfrac\alpha2\Delta_hW_n^{n-1})\xi^n = ((1-\tfrac\alpha2)+\tfrac{\alpha}{2}\xi)\Delta_h\widetilde W_h(\xi).
  \end{aligned}
\end{equation*}
Meanwhile, by a simple computation, we have
$
   \sum_{n=1}^\infty \xi^n +(\frac12-\frac{3\alpha}{4})\xi+\frac\alpha4\xi^2 
    = \frac{3\xi-\xi^2}{2(1-\xi)}(1-\frac{\alpha}{2}+\frac{\alpha}{2}\xi).
$
Consequently, the definition of $G_h^n$ implies $G_h^0=0$ and
\begin{equation*}
  \begin{aligned}
   &\sum_{n=1}^\infty\left((1-\tfrac\alpha2)F_h^n+\tfrac\alpha2F_h^{n-1} \right)\xi^n + \left((\tfrac{1}{2}-\tfrac{3\alpha}{4})\xi  + \tfrac{\alpha}{4}\xi^2\right)F_h^0\\
   = & \sum_{n=1}^\infty\left((1-\tfrac\alpha2)G_h^n+\tfrac\alpha2G_h^{n-1} \right)\xi^n + \left(\sum_{n=1}^\infty \xi^n+(\tfrac{1}{2}-\tfrac{3\alpha}{4})\xi  + \tfrac{\alpha}{4}\xi^2\right)F_h^0\\
   = & (1-\tfrac{\alpha}{2}+\tfrac{\alpha}{2}\xi)\widetilde G_h(\xi) + \tfrac{3\xi-\xi^2}{2(1-\xi)}(1-\tfrac{\alpha}{2}+\tfrac{\alpha}{2}\xi)F_h^0.
  \end{aligned}
\end{equation*}
Substituting the preceding identities into \eqref{Eq_Wnh-0} yields
\begin{equation*}
    \left((\beta_\tau(\xi))^{\al}   -\Delta_h  \right) \widetilde W_h(\xi) = \kappa(\xi)\Delta_h v_h
    + \kappa(\xi)  F_h^0 + \widetilde G_h(\xi),
\end{equation*}
with $\kappa(\xi) = \tfrac{3\xi-\xi^2}{2(1-\xi)}$. Since $|\xi|\leq 1$,
$\beta_\tau(\xi)^\alpha \in \Sigma_{\theta^\prime}$ for some $\theta^\prime\in(\pi/2,\pi)$ \cite[proof of Theorem 6.1]{JLZ},
by the resolvent estimate \eqref{eqn:resol}, we have
\begin{equation}\label{Def-tilde-Wh}
\widetilde W_h(\xi) =  \left((\beta_\tau(\xi))^{\al} -\Delta_h  \right)^{-1}\left(\kappa(\xi) \Delta_h v_h + \kappa(\xi) F_h^0 + \widetilde G_h (\xi)\right).
\end{equation}
Without loss of generality, we can assume $F^n_h=F^0_h$ (so $G^n_h=0$) for $n> N=T/\tau$.
Otherwise we redefine $F^n_h:=F^0_h$ for $n> N=T/\tau$, and this modification does not affect
of the value of $W_h^n$ for $n=1,\dots,N$, in view of \eqref{Eq_Wnh}. Clearly, the function
$\widetilde  W_h(\xi)$ defined in \eqref{Def-tilde-Wh} is analytic with respect to $\xi$ in a neighborhood of
the origin, and thus Cauchy's integral formula implies that for small $\varrho$
\begin{equation*}
    W_h^n = \frac{1}{2 \pi\mathrm{i}}\int_{|\xi|=\varrho}\xi^{-n-1} \widetilde  W_h(\xi)  d\xi
    = \frac{\tau}{2\pi\mathrm{i}}\int_{\Gamma^\tau} e^{zt_{n}}\widetilde  W_h(e^{-z\tau})\, dz,
\end{equation*}
where the second equality follows by changing the variables $\xi=e^{-z\tau}$, and the contour $\Gamma^\tau$ is given by
\begin{equation*}
   \Gamma^\tau:=\{ z=-\ln(\varrho)/\tau+\mathrm{i} y: \, y\in{\mathbb R}\,\,\,\mbox{and}\,\,\,|y|\le {\pi}/{\tau} \}.
\end{equation*}
Since both $\kappa(e^{-z\tau})$ and $\widetilde G_h(e^{-z\tau})$ are analytic with respect to $z\in{\mathbb C}
\backslash\{0\}$, Lemma \ref{lem:est-hom-2} below implies that the function $e^{zt_{n}}
\widetilde  W_h(e^{-z\tau})$  is analytic with respect
to $z$ in the region enclosed by $\Gamma^\tau$, $\Gamma^\tau_{\theta,\delta}$ and the two lines $\Gamma_{\pm}^\tau:
={\mathbb R}\pm \mathrm{i}\pi/\tau$ (oriented from left to right). Then, since the values of $e^{zt_{n}}\widetilde  W_h
(e^{-z\tau})$ on the two lines $\Gamma_{\pm}^\tau$ coincide, it follows from Cauchy's theorem that
\begin{align*}
\begin{aligned}
    W_h^n
    & = \frac{\tau}{2\pi\mathrm{i}}\int_{\Gamma^\tau} e^{zt_{n}}\widetilde  W_h(e^{-z\tau})\, dz  = \frac{\tau}{2\pi\mathrm{i}}\int_{\Gamma^\tau_{\theta,\delta}} e^{zt_{n}}\widetilde  W_h(e^{-z\tau})\, dz\\
    &\quad     +\frac{\tau}{2\pi\mathrm{i}}\int_{\Gamma^\tau_+} e^{zt_{n}}\widetilde  W_h(e^{-z\tau})\, dz -\frac{\tau}{2\pi\mathrm{i}}\int_{\Gamma^\tau_-} e^{zt_{n}}\widetilde  W_h(e^{-z\tau})\, dz
    = \frac{\tau}{2\pi\mathrm{i}}\int_{\Gamma^\tau_{\theta,\delta}} e^{zt_{n}}\widetilde  W_h(e^{-z\tau})\, dz .
\end{aligned}
\end{align*}
This completes the proof of the proposition.
\end{proof}

The next result gives basic estimates on the functions
$(1-\frac{\alpha}{2}+\frac{\alpha}{2}e^{-z\tau})^{1/\alpha}$ and $(1-e^{-z\tau})^\alpha$
from Proposition \ref{lem:solurep}. These estimates are crucial for the error analysis in Section \ref{ssec:homo} below.
\begin{lemma}\label{lem:g}
Let $\alpha\in(0,1)$. Then there exists a $\delta_1>0$ {\rm(}independent of $\tau${\rm)}
such that for $\delta\in(0,\delta_1]$ and $\theta\in({\pi}/{2},{\pi}/{2}+\delta_1]$, there hold for
any $z\in \Gamma_{\theta,\delta}^\tau $
  \begin{align}
     c_0\leq  |(1-\tfrac{\alpha}{2}+\tfrac{\alpha}{2}e^{-z\tau})^{1/\alpha}|&\leq c_1,\label{eqn:est-g-1}\\
    |(1-\tfrac{\alpha}{2}+\tfrac{\alpha}{2}e^{-z\tau})^{1/\alpha}-(1-\tfrac{z\tau}{2})|&\leq c\tau^2|z|^2,\label{eqn:est-g-2} \\
    |(1-e^{-z\tau})^\alpha - \tau^\alpha z^\alpha(1-\tfrac{\alpha}{2}+\tfrac{\alpha}{2}e^{-z\tau})|&\leq c|z|^{2+\alpha}\tau^{2+\alpha},\label{eqn:est-g-3}
  \end{align}
where the constants $c_0$, $c_1$ and $c$ are independent of $\tau$, $\theta$ and $\delta$ {\rm(}but may depend on $\delta_1${\rm)}.
\end{lemma}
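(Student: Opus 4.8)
The plan is to establish the three estimates \eqref{eqn:est-g-1}--\eqref{eqn:est-g-3} by reducing everything to elementary Taylor expansions of the analytic functions $\xi\mapsto(1-\tfrac{\alpha}{2}+\tfrac{\alpha}{2}\xi)^{1/\alpha}$ and $z\mapsto e^{-z\tau}$ on the compact-in-scale set $\Gamma_{\theta,\delta}^\tau$. The crucial structural fact is that on $\Gamma_{\theta,\delta}^\tau$ we have $|z\tau|\le C$ for a universal constant $C$ (since $|\Im z|\le\pi/\tau$ forces $|z|\lesssim 1/\tau$ when $\theta$ is close to $\pi/2$), so $w:=z\tau$ ranges over a \emph{bounded} region of the complex plane, a truncated sector of half-angle slightly above $\pi/2$ near the origin. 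Thus it suffices to prove the scalar inequalities for $w=z\tau$ in this fixed bounded region, with constants independent of how fine the partition is; the $\tau$-dependence in the right-hand sides is then purely bookkeeping via $|w|=\tau|z|$.

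First I would fix $\delta_1>0$ small enough that the sector $\{|\arg w|\le \pi/2+\delta_1\}\cap\{|w|\le C\}$ stays inside the domain of analyticity of $g(\xi):=(1-\tfrac\alpha2+\tfrac\alpha2\xi)^{1/\alpha}$ when $\xi=e^{-w}$; concretely, $1-\tfrac\alpha2+\tfrac\alpha2 e^{-w}$ must avoid the branch cut of $(\cdot)^{1/\alpha}$, i.e. stay away from the negative real axis, and since at $w=0$ it equals $1$ and $|e^{-w}-1|$ is small on a small truncated sector, continuity gives this for $\delta_1$ and $C$ (equivalently, for $\tau|z|$) suitably controlled. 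On this set $g(e^{-w})$ is analytic, nonvanishing, and $g(e^{0})=1$, so \eqref{eqn:est-g-1} is immediate by compactness: $|g(e^{-w})|$ is bounded above and below by positive constants. For \eqref{eqn:est-g-2} I would Taylor-expand $h(w):=g(e^{-w})$ about $w=0$: one computes $h(0)=1$ and $h'(0)$, and the key algebraic point is that $h'(0)=-\tfrac12$, so $h(w)-(1-\tfrac w2)=h(w)-h(0)-h'(0)w=O(w^2)$ by Taylor's theorem with remainder, the implied constant being $\sup|h''|$ over the compact set; rewriting $w=z\tau$ gives $|h(z\tau)-(1-\tfrac{z\tau}{2})|\le c\tau^2|z|^2$, which is \eqref{eqn:est-g-2}. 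The verification $h'(0)=-1/2$ is the only genuine computation and is short: $h(w)=(1-\tfrac\alpha2+\tfrac\alpha2 e^{-w})^{1/\alpha}$, so $h'(w)=\tfrac1\alpha(1-\tfrac\alpha2+\tfrac\alpha2 e^{-w})^{1/\alpha-1}\cdot(-\tfrac\alpha2 e^{-w})$, and at $w=0$ this is $\tfrac1\alpha\cdot 1\cdot(-\tfrac\alpha2)=-\tfrac12$.

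For \eqref{eqn:est-g-3} I would write $1-e^{-w}=w\,p(w)$ with $p(w):=\tfrac{1-e^{-w}}{w}$ analytic and $p(0)=1$, so $(1-e^{-w})^\alpha=w^\alpha p(w)^\alpha$ where $p(w)^\alpha$ is analytic near $w=0$ (as $p(0)=1$ avoids the branch point), with $p(w)^\alpha = 1 - \tfrac\alpha2 w + O(w^2)$, the linear coefficient coming from $p(w)=1-\tfrac w2+O(w^2)$. On the other hand $w^\alpha$ factors out of both sides of \eqref{eqn:est-g-3}: dividing by $\tau^\alpha z^\alpha = w^\alpha$ (legitimate since $\arg w$ stays in a fixed sector, so $w^\alpha$ is single-valued and $|w^\alpha|\asymp|w|^\alpha$), \eqref{eqn:est-g-3} is equivalent to $|p(w)^\alpha - (1-\tfrac\alpha2+\tfrac\alpha2 e^{-w})|\le c|w|^2$. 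Now both $p(w)^\alpha$ and $q(w):=1-\tfrac\alpha2+\tfrac\alpha2 e^{-w}$ are analytic near $0$ with $p(0)^\alpha=q(0)=1$ and matching first derivatives $-\tfrac\alpha2$, so their difference is $O(w^2)$ by Taylor, and multiplying back by $|w^\alpha|\le c|w|^\alpha\le c\tau^\alpha|z|^\alpha$ restores \eqref{eqn:est-g-3}. The main obstacle — really the only subtle point — is the justification that $\tau|z|$ is bounded on $\Gamma_{\theta,\delta}^\tau$ uniformly in $\tau$, and that the resulting bounded sector avoids all the relevant branch cuts; once $\delta_1$ is pinned down so that this holds, every estimate follows from Taylor's theorem on a fixed compact set, and all constants are manifestly independent of $\tau$, $\theta$, $\delta$ (depending only on $\delta_1$ and $\alpha$).
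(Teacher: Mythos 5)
Your overall strategy---pass to the bounded variable $w=z\tau$ and run Taylor's theorem with remainder on a fixed truncated sector---is in the same spirit as the paper's proof, which likewise reduces everything to expansions of $(1-\tfrac\alpha2+\tfrac\alpha2 e^{-z\tau})^{1/\alpha}$ and $(1-e^{-z\tau})^\alpha$ in the small quantity $z\tau$ (the paper splits into $|z|\tau\le\epsilon$, handled by series, and $\epsilon<|z|\tau\le\pi/\sin\theta$, handled by a trivial bound since there $\tau^2|z|^2\ge\epsilon^2$). Your computations $h'(0)=-\tfrac12$ and the matching of $p(w)^\alpha$ and $q(w):=1-\tfrac\alpha2+\tfrac\alpha2e^{-w}$ to first order are correct, and the reduction of \eqref{eqn:est-g-3} by factoring out $w^\alpha$ is legitimate on a sector.

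The genuine gap is in the step where you claim the region of $w$ ``avoids all the relevant branch cuts'' because ``at $w=0$ it equals $1$ and $|e^{-w}-1|$ is small on a small truncated sector \dots for $\delta_1$ and $C$ suitably controlled.'' The constant $C$ is not at your disposal: the contour $\Gamma^\tau_{\theta,\delta}$ is truncated at $|\Im z|=\pi/\tau$, so on the rays $|w|=|z|\tau$ runs all the way up to $\pi/\sin\theta>\pi$, no matter how $\delta_1$ and $\tau$ are chosen; near the endpoints $w\approx \pi\cot\theta+\mathrm{i}\pi$ one has $e^{-w}\approx -e^{\pi|\cot\theta|}$, so $|e^{-w}-1|>2$ and no near-origin continuity argument applies. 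Whether $q(w)$ stays uniformly away from $0$ and from the cut $(-\infty,0]$ on that far part of the contour is precisely the nontrivial content of \eqref{eqn:est-g-1}: it holds only because $|\tfrac\alpha2 e^{-w}|\le \tfrac\alpha2 e^{\pi|\cot\theta|}<1-\tfrac\alpha2$ once $\theta-\pi/2$ is small enough that $e^{\pi|\cot\theta|}<2/\alpha-1$ (this is exactly the paper's computation with $\eta=2/\alpha-1>1$), and it genuinely fails otherwise: $q$ vanishes at $w=-\ln(2/\alpha-1)\pm \mathrm{i}\pi$, a point that approaches the contour as $\alpha\to1$, so $\delta_1$ must be chosen depending on $\alpha$ by a quantitative argument, not by softness near $w=0$. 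Without this lower bound your compactness bound in \eqref{eqn:est-g-1} has nothing to stand on, and the constants $\sup|h''|$ and $\sup|(p^\alpha-q)''|$ invoked for \eqref{eqn:est-g-2}--\eqref{eqn:est-g-3} are not finite a priori, since $h''$ carries the factor $q^{1/\alpha-2}$ which blows up where $q$ is small (for \eqref{eqn:est-g-3} you additionally need, and do not verify, that $p(w)=(1-e^{-w})/w$ avoids $(-\infty,0]$ on the same region). To close the gap, either insert the explicit estimate $|q(w)|\ge(1-\tfrac\alpha2)-\tfrac\alpha2 e^{\pi|\cot\theta|}>0$ for $\theta\in(\pi/2,\pi/2+\delta_1(\alpha)]$ (plus the corresponding statement for $p$), or follow the paper and confine the Taylor argument to $|z|\tau\le\epsilon$ with $\epsilon$ chosen so the expansions converge, disposing of the remaining range $\epsilon<|z|\tau\le\pi/\sin\theta$ by the boundedness of the two sides alone.
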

\begin{proof}
Let $g(z)=(1-\frac{\alpha}{2}+\frac{\alpha}{2}e^{-z\tau})^{1/\alpha}$.
First we consider $z\in \Gamma_{\theta,+}^\tau$, and  write $z=re^{\mathrm{i}\theta}$, $r\in(\delta,\pi/(\tau\sin\theta)]$. Then
with $s=r\tau\sin\theta\in (0,\pi)$ and $\gamma=-\cot \theta>0$, $\eta=\frac{2}{\alpha}-1>1$, there holds
\begin{equation*}
   |g(z)|^\alpha = \tfrac{\alpha}{2}(\eta^2 + e^{2\gamma s} + 2\eta e^{\gamma s}\cos s)^{1/2}
     \geq \tfrac{\alpha}{2}(\eta^2 + e^{2\gamma s}-2 e^{\gamma s})^{1/2}
     \geq \tfrac{\alpha}{2}(\eta -e^{\gamma\pi}).
\end{equation*}
Since $\alpha\in(0,1)$, we have $\eta-e^{\gamma\pi}>0$, for $\theta\in(\pi/2,\pi)$ close to $\pi/2$.
Next we consider $z\in \Gamma_\delta$, with $z=\delta e^{\mathrm{i}\varphi}$,
$0\leq \varphi\leq \theta$ and small $\delta$. Then by letting $\rho=\tau\delta \in (0,1)$ and $s=\rho\cos\varphi \in[-\epsilon,\rho]$, for small $\epsilon>0$,
and $h(s)=(\rho^2-s^2)^{1/2}\leq \rho\leq 1$, we have $\cos h(s)\geq 0$ and thus
\begin{equation*}
     |g(z)|^\alpha
      =\tfrac{\alpha}{2}(\eta^2+2\eta e^{-s}\cos h + e^{-2s})^{1/2}\geq \tfrac{\alpha}{2}(\eta^2 + e^{-2s})^{1/2} \geq \tfrac{\alpha}{2}\eta.
\end{equation*}
This shows the lower bound on $|g(z)|$ in \eqref{eqn:est-g-1}. The upper bound on $|g(z)|$ in \eqref{eqn:est-g-1} follows by
$|g(z)|^\alpha \le 1-\tfrac\al2 + \tfrac\al2 e^{-\pi \cot\theta} \le c$ for any $z\in \Gamma_{\theta,+}^\tau,$
and a similar bound for $z\in \Gamma_\delta$.

For the  estimate \eqref{eqn:est-g-2}, it suffices to show
\begin{equation}\label{eqn:est-g}
  |g(z)-(1-\tfrac{z\tau}{2})|/(|z|^2\tau^2) \leq c, \quad \forall z\in \Gamma_{\theta,\delta}^\tau .
\end{equation}
If $|z|\tau\leq\epsilon$, where $\epsilon\in(0,1)$ is to be determined, then by Taylor expansion, we deduce
\begin{equation*}
  g(z) - (1-\tfrac{z\tau}{2}) = \sum_{k=2}^\infty C(\tfrac{1}{\alpha},k)(-\tfrac{\alpha}{2}+\tfrac{\alpha}{2}e^{-z\tau})^k + O(|z|^2\tau^2),
\end{equation*}
with $C(\gamma,k)=\frac{\Gamma(\gamma+1)}{\Gamma(k+1)\Gamma(\gamma-k+1)}
$. Meanwhile, we have
\begin{equation*}
  \begin{aligned}
   |-\frac{\alpha}{2} + \frac{\alpha}{2}e^{-z\tau}| &\leq \frac{\alpha}{2}|z\tau|\sum_{k=0}^\infty \frac{|z\tau|^{k}}{(k+1)!}
      \leq \frac{\alpha}{2} |z|\tau \frac{e^{\epsilon}-1}{\epsilon}.
  \end{aligned}
\end{equation*}
Since $f(\epsilon)=\frac{e^\epsilon-1}{\epsilon}$ is increasing in $\epsilon$ for $\epsilon\in(0,\infty)$ and
$\lim_{\epsilon\to0^+} f(\epsilon)=1$, for any $\alpha\in(0,1)$, there exists an $\epsilon\in(0,1)$
such that $\frac{\alpha(e^\epsilon-1)}{2\epsilon} <1$. By ratio test, $
|\sum_{k=2}^\infty C(\tfrac{1}{\alpha},k)(-\tfrac{\alpha}{2}+\tfrac{\alpha}{2}e^{-z\tau})^k|\leq c|z|^2\tau^2$, and
thus \eqref{eqn:est-g} holds. Meanwhile, for $|z|\tau>\epsilon$, there exists a $\delta_1>0$ (independent of $\tau$)
such that for $\delta\in(0,\delta_1]$ and $\theta\in({\pi}/{2},{\pi}/{2}+\delta_1]$,
$|g(z)|\leq c$. Since $| z|\tau\leq \pi/\sin\theta$ for
$z\in \Gamma_{\theta,\delta}^\tau$, this again yields \eqref{eqn:est-g}, showing the estimate \eqref{eqn:est-g-2}.

Next we turn to the third estimate \eqref{eqn:est-g-3}. Since $|z|\tau\leq c$ for
$z\in\Gamma_{\theta,\delta}^\tau$, like before, it suffices to show \eqref{eqn:est-g-3}
for $|z|\tau\leq 1$. For $|z|\tau\leq 1$, by Taylor expansion, we deduce
\begin{equation*}
  1-e^{-z\tau} = z\tau \sum_{j=1}^\infty\frac{(-z\tau)^{j-1}}{j!} = z\tau + z\tau \sum_{j=2}^\infty\frac{(-z\tau)^{j-1}}{j!}.
\end{equation*}
In the identity
$
  \sum_{j=2}^\infty \frac{(-z\tau)^{j-1}}{j!} = \frac{-z\tau}{2} + (-z\tau)^2\sum_{j=3}^\infty\frac{(-z\tau)^{j-2}}{j!},
$
we have
\begin{equation*}
  |\sum_{j=3}^\infty \frac{(-z\tau)^{j-2}}{j!}| \leq \sum_{j=3}^\infty\frac{1}{j!}\leq e\quad \mbox{and}\quad
  |\sum_{j=2}^\infty \frac{(-z\tau)^{j-1}}{j!}| \leq 
  |z|\tau(e-2)<|z|\tau.
\end{equation*}
Thus we have
\begin{equation*}
  \begin{aligned}
    (1-e^{-z\tau})^\alpha &= z^\alpha\tau^\alpha \big(1+ \sum_{j=2}^\infty\frac{(-z\tau)^{j-1}}{j!}\big)^\alpha \\
      & = z^\alpha \tau^\alpha  + \alpha z^\alpha \tau^\alpha \sum_{j=2}^\infty \frac{(-z\tau)^{j-1}}{j!} + z^\alpha\tau^\alpha\sum_{k=2}^\infty C(\alpha,k)\big(\sum_{j=2}^\infty\frac{(-z\tau)^{j-1}}{j!}\big)^k\\
      & = z^\alpha \tau^\alpha - \tfrac{\alpha}{2}z^{\alpha+1}\tau^{\alpha+1} + O(|z|^{\alpha+2}\tau^{\alpha+2}),
  \end{aligned}
\end{equation*}
and $  \tau^\alpha z^\alpha (1-\tfrac{\alpha}{2}+\tfrac{\alpha}{2}e^{-z\tau}) = \tau^\alpha z^\alpha -\tfrac{\alpha}{2}\tau^{\alpha+1}z^{\alpha+1} +O(|z|^{\alpha+2}\tau^{\alpha+2}).$
Combining the last two estimates completes the proof of the lemma.
\end{proof}

The next lemma gives a crucial sector mapping property of the function $\beta_\tau(e^{-z\tau})^\alpha$.
The proof relies on the fact that $\beta_\tau(e^{-z\tau})^\alpha$ is
very close to $\beta_\tau(e^{-\mathrm{i}s})^\alpha$ for $z\in\Gamma_{\theta,+}^\tau$
(if $\theta\in(\pi/2,\pi)$ is sufficiently close to $\pi/2$) and uses the result $\beta_\tau(e^{-\mathrm{i}s})^\alpha
\in \Sigma_{\alpha\pi/2}$ from \cite[Theorem 6.1]{JLZ}.

\begin{lemma}\label{lem:est-hom-2}
For $\alpha\in(0,1)$, let $\phi\in(\alpha\pi/2,\pi)$ be fixed.
Then there exists a $\delta_{0}>0$ {\rm(}independent of $\tau${\rm)} such that for $\delta\in(0,\delta_{0}]$ and
$\theta\in({\pi}/{2},{\pi}/{2}+\delta_{0}]$, we have
\begin{align}\label{angle_beta-tau}
&\beta_\tau(e^{-z\tau})^\alpha\in \Sigma_{\phi} \, \quad \forall \, z\in \Gamma^\tau_{\theta,\delta}  \, \cup \, \overline\Sigma_{\pi/2}\backslash\{0\} .
\end{align}
Moreover, the operator $(\beta_\tau(e^{-z\tau})^\al-\Delta_h)^{-1}$ is analytic with respect to $z$ in the region enclosed by the curves $\Gamma^\tau$, $\Gamma^\tau_{\theta,\delta}$ and $\Gamma_{\pm}^\tau:={\mathbb R}\pm \mathrm{i}\pi/\tau$, and satisfies
\begin{equation} \label{beta:resolv}
    \|(\beta_\tau(e^{-z\tau})^\al-\Delta_h)^{-1}\|\le c |\beta_\tau(e^{-z\tau})|^{-\alpha} ,\quad \forall z\in \Gamma_{\theta,\delta}^\tau \, ,
\end{equation}
where the constant $c$ is independent of $\tau$ {\rm(}but may depend on $\phi${\rm)}.
\end{lemma}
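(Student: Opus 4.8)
The plan is to establish the sector mapping property \eqref{angle_beta-tau} first, and then deduce the resolvent estimate \eqref{beta:resolv} and analyticity as routine consequences. Write $\beta_\tau(e^{-z\tau})^\alpha = \tau^{-\alpha}(1-e^{-z\tau})^\alpha (1-\tfrac\alpha2+\tfrac\alpha2 e^{-z\tau})^{-1}$. For $z = \mathrm{i}\xi$ on the imaginary axis, $\tau^{-\alpha}(1-e^{-\mathrm{i}\xi\tau})^\alpha(1-\tfrac\alpha2+\tfrac\alpha2 e^{-\mathrm{i}\xi\tau})^{-1} = \beta_\tau(e^{-\mathrm{i}s})^\alpha$ with $s = \xi\tau$, and by \cite[Theorem 6.1]{JLZ} this lies in $\overline\Sigma_{\alpha\pi/2}$; since $\phi > \alpha\pi/2$ is fixed, there is room to spare. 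The idea is then to show that for $z$ on the slanted rays $\Gamma^\tau_{\theta,\pm}$ with $\theta$ close to $\pi/2$, the value $\beta_\tau(e^{-z\tau})^\alpha$ is a small perturbation of the boundary value $\beta_\tau(e^{-\mathrm{i}s})^\alpha$ (with $s=\Im(z)\tau$), so that it still lies in $\Sigma_\phi$. Concretely, I would parametrize $z = r e^{\mathrm{i}\theta}$ on $\Gamma^\tau_{\theta,+}$, write $\theta = \pi/2 + \varepsilon$ so $e^{-z\tau} = e^{-\mathrm{i}s} e^{\gamma s}$ with $\gamma = -\cot\theta = \tan\varepsilon \to 0$ and $s = r\tau\sin\theta \in (0,\pi]$, and then estimate $|\beta_\tau(e^{-z\tau})^\alpha - \beta_\tau(e^{-\mathrm{i}s})^\alpha|$ against $|\beta_\tau(e^{-\mathrm{i}s})^\alpha|$ uniformly in $s$, using $|\gamma| \le \delta_0$. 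The bounds \eqref{eqn:est-g-1} from Lemma \ref{lem:g} keep the denominator $(1-\tfrac\alpha2+\tfrac\alpha2 e^{-z\tau})$ bounded away from $0$ and $\infty$, so everything reduces to controlling $(1-e^{-z\tau})^\alpha$ versus $(1-e^{-\mathrm{i}s})^\alpha$; the harder corner is $s$ near $0$, where $|1-e^{-\mathrm{i}s}|\to 0$ and one must argue that the relative perturbation stays small — here I would switch to the small-$|z|\tau$ expansion $(1-e^{-z\tau})^\alpha = (z\tau)^\alpha(1+O(z\tau))$ and $(1-e^{-\mathrm{i}s})^\alpha = (\mathrm{i}s)^\alpha(1+O(s))$, reducing the claim to $\arg(z^\alpha)$ being close to $\arg((\mathrm{i}s)^\alpha) = \alpha\pi/2$, which holds since $\arg z = \pi/2 + \varepsilon$.

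For the part of \eqref{angle_beta-tau} over the closed right half-plane $\overline\Sigma_{\pi/2}\backslash\{0\}$ (needed so that the deformation of contours in Proposition \ref{lem:solurep} is legitimate), I would argue that $\beta_\tau(e^{-z\tau})^\alpha$ maps the imaginary axis into $\Sigma_{\alpha\pi/2}$ (the quoted result) and is holomorphic and bounded on $\overline\Sigma_{\pi/2}$ minus a neighborhood of $0$ — more precisely that $e^{-z\tau}$ maps $\Sigma_{\pi/2}$ into the open unit disk, on which $(1-\xi)^\alpha$ and $(1-\tfrac\alpha2+\tfrac\alpha2\xi)^{-1}$ are analytic and nonvanishing — and then use a Phragmén–Lindelöf / argument-principle type reasoning, or simply the explicit structure, to conclude $\beta_\tau(e^{-z\tau})^\alpha\in\Sigma_\phi$ there as well. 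In fact the cleanest route is: for $\xi$ in the closed unit disk with $\xi \ne 1$, $1-\xi$ lies in the closed right half-plane, so $(1-\xi)^\alpha \in \overline\Sigma_{\alpha\pi/2}$; and $1-\tfrac\alpha2+\tfrac\alpha2\xi$ lies in the disk of radius $\tfrac\alpha2$ centered at $1-\tfrac\alpha2$, hence in a fixed sector $\Sigma_{\psi_\alpha}$ with $\psi_\alpha = \arcsin(\tfrac\alpha2/(1-\tfrac\alpha2)) < \pi/2$; therefore $\beta_\tau(e^{-z\tau})^\alpha \in \Sigma_{\alpha\pi/2 + \psi_\alpha}$. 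This is a fixed sector independent of $\tau$, but it may exceed $\phi$, so for the portion near the imaginary axis one still needs the sharper input from \cite[Theorem 6.1]{JLZ}; the two estimates are then patched: sharp near $\Im z = 0$-ish region via the perturbation argument, crude bound elsewhere only if it happens to fit, otherwise one refines $\delta_0$.

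Once \eqref{angle_beta-tau} is in hand, the resolvent bound \eqref{beta:resolv} is immediate from the resolvent estimate \eqref{eqn:resol} for $\Delta_h$: since $\beta_\tau(e^{-z\tau})^\alpha \in \Sigma_\phi$ and $\phi$ can be taken in $(\alpha\pi/2,\pi)$ with $\Delta_h$ sectorial on $\Sigma_\theta$ for any $\theta<\pi$, we get $\|(\beta_\tau(e^{-z\tau})^\alpha - \Delta_h)^{-1}\| \le c|\beta_\tau(e^{-z\tau})^\alpha|^{-1} \cdot |\beta_\tau(e^{-z\tau})^\alpha| \cdot |\beta_\tau(e^{-z\tau})|^{-\alpha}$... more simply, directly $\|(\beta_\tau(e^{-z\tau})^\alpha-\Delta_h)^{-1}\|\le c|\beta_\tau(e^{-z\tau})^\alpha|^{-1} = c|\beta_\tau(e^{-z\tau})|^{-\alpha}$ by \eqref{eqn:resol} applied with $z$ replaced by $\beta_\tau(e^{-z\tau})^\alpha$. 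Analyticity in the region enclosed by $\Gamma^\tau$, $\Gamma^\tau_{\theta,\delta}$ and $\Gamma^\tau_\pm$ follows because in that region $e^{-z\tau}$ avoids the value $1$ (so $\beta_\tau(e^{-z\tau})^\alpha$ is well-defined and holomorphic) and, by \eqref{angle_beta-tau} extended to that region via the same perturbation argument, it stays in $\Sigma_\phi$, hence in the resolvent set of $\Delta_h$; composition of holomorphic maps then gives analyticity of the resolvent. The main obstacle I anticipate is the uniform (in $\tau$ and in $s\in(0,\pi]$, especially as $s\to 0^+$) control of the perturbation $\beta_\tau(e^{-z\tau})^\alpha$ away from $\beta_\tau(e^{-\mathrm{i}s})^\alpha$ while keeping the argument strictly below $\phi$ — this is exactly where the smallness of $\delta_0$ must be quantified, and it is the crux of why the lemma is stated as "there exists $\delta_0$".
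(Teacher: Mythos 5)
Your treatment of the rays $\Gamma^\tau_{\theta,\pm}$ is essentially the paper's argument: view $\beta_\tau(e^{-z\tau})^\alpha$ as a perturbation of the unit-circle value $\beta_\tau(e^{-\mathrm{i}s})^\alpha$ (which lies in $\Sigma_{\alpha\pi/2}$ by the quoted result of \cite[Theorem 6.1]{JLZ}), bound the perturbation \emph{relative} to $|\beta_\tau(e^{-\mathrm{i}s})^\alpha|$ with a factor proportional to $|\cot\theta|\sim|\theta-\pi/2|$, and handle the delicate regime $s\to0^+$ by the small-$|z|\tau$ expansion $(1-e^{-z\tau})^\alpha=(z\tau)^\alpha(1+O(z\tau))$; the same expansion disposes of the arc $\Gamma_\delta$, and your derivation of \eqref{beta:resolv} from the sectoriality of $\Delta_h$ applied at the point $\beta_\tau(e^{-z\tau})^\alpha$, as well as the analyticity argument (holomorphy of the composition plus the values staying in the resolvent set, swept over the intermediate contours), coincide with the paper's proof.

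The one genuine gap is the case $z\in\overline\Sigma_{\pi/2}\backslash\{0\}$. Your ``cleanest route'' only yields $\beta_\tau(e^{-z\tau})^\alpha\in\Sigma_{\alpha\pi/2+\psi_\alpha}$ with $\psi_\alpha=\arcsin\bigl(\tfrac{\alpha/2}{1-\alpha/2}\bigr)$, and since $\phi$ may be taken arbitrarily close to $\alpha\pi/2$ (and $\psi_\alpha\to\pi/2$ as $\alpha\to1$), this sector need not be contained in $\Sigma_\phi$; moreover the fallback you offer, ``refine $\delta_0$'', cannot repair it, because neither $\phi$ nor the half-plane statement depends on $\delta_0$ at all. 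The correct resolution is that the cited result holds on the whole closed unit disk, not merely on its boundary: \cite[proof of Theorem 6.1]{JLZ} gives $\beta_\tau(\xi)^\alpha\in\Sigma_{\alpha\pi/2}$ for all $|\xi|\le1$, $\xi\ne1$, which is exactly what the paper invokes for $|e^{-z\tau}|\le1$. Alternatively, your Phragm\'en--Lindel\"of remark can be made rigorous (the argument of the nonvanishing holomorphic function $\beta_\tau(\xi)^\alpha$ is a bounded harmonic function on the open disk, so its bound $\alpha\pi/2$ on the boundary circle propagates to the interior), but as written this step is left as a vague ``patching'' and the half-plane inclusion is what legitimizes both the contour deformation in Proposition \ref{lem:solurep} and the analyticity claim, so it must be closed explicitly.
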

\begin{proof}
For the proof, we split the contour $\Gamma_{\theta,\delta}^\tau$ into two parts, i.e.,
\begin{equation}\label{Def-Gamma_theta_delta-tau}
  \Gamma_{\theta,\delta}^\tau:=\Gamma_{\delta}\cup\Gamma_{\theta,\pm}^\tau:=\left\{z\in \mathbb{C}: |z|=\delta, |\arg z|\le \theta\right\}\cup
  \big\{z\in \mathbb{C}: z=r e^{\pm \mathrm{i}\theta},\,\,  \delta\le r \le {\pi}/({\tau|\sin(\theta)|})\big\} .
\end{equation}
To prove \eqref{angle_beta-tau}, we consider the following three cases $z\in \Gamma_\delta$, $z\in \Gamma_{\theta,\pm}^\tau$ and $z\in \overline\Sigma_{\pi/2}\backslash\{0\}$, separately. First, for $z\in\Gamma_\delta
\subset\Sigma_{\theta}$, by choosing $\delta>0$ sufficiently small and using Taylor's  expansion, we have
\begin{equation*}
  \beta_\tau(e^{-z\tau})^\alpha
  = \frac{(1-e^{-z\tau})^\alpha}{\tau^\alpha(1-\tfrac\al2+\tfrac\al2e^{-z\tau})}
  =|z|^\alpha  e^{{\rm i}\alpha\,{\rm arg}(z)} (1+O(z\tau))
  \in \Sigma_{\alpha\theta+\varepsilon_\delta},
\end{equation*}
for some $\varepsilon_\delta>0$ with
$\lim_{\delta\rightarrow 0^+}\varepsilon_\delta=0$, showing the relation \eqref{angle_beta-tau}
for $z\in\Gamma_\delta$. Second, for $z=|z|e^{\mathrm{i}\theta}\in\Gamma_{\theta,+}^\tau$, we
have $e^{-z\tau}=e^{-s \cot(\theta)}e^{-\mathrm{i} s}$,
$s=|z|\tau \sin(\theta)\in(0,\pi)$. Let $\gamma_\tau(\xi):=\beta_\tau(\xi)^\alpha$. Then
there exists some $\sigma(s)\in(0,1)$ such that
\begin{equation*}
  |\beta_\tau(e^{-z\tau})^\alpha-\beta_\tau(e^{-\mathrm{i} s})^\alpha|
 =|\gamma_\tau(e^{-z\tau}) -\gamma_\tau(e^{-\mathrm{i} s})|
  \le cs|\cot\theta||\gamma_\tau'(e^{-\sigma(s) s\cot(\theta)}e^{-\mathrm{i} s}) |
\end{equation*}
Straightforward computation gives $\gamma_\tau^\prime(\xi)=-\alpha\tau^{-\alpha}\frac{(1-\xi)^{\alpha-1}(3-\alpha
+(\alpha-1)\xi)}{2(1-\frac{\alpha}{2}+\frac{\alpha}{2}\xi)^2}$. For $\theta\in(\pi/2,\pi)$
sufficiently close to $\pi/2$, $e^{-\sigma(s) s\cot(\theta)}\approx 1$. Then Lemma \ref{lem:g} implies
\begin{equation*}
  |\gamma_\tau'(e^{-\sigma(s) s\cot(\theta)}e^{-\mathrm{i} s})|
  \leq c\tau^{-\alpha}|1-e^{-\sigma(s)s\cot\theta}e^{-\mathrm{i}s}|^{\alpha-1}.
\end{equation*}
Combining the preceding two estimates with the inequality $|\cot\theta| \leq c|\theta-\pi/2|$ yields
\begin{equation*}
  |\beta_\tau(e^{-z\tau})^\alpha-\beta_\tau(e^{-\mathrm{i} s})^\alpha| \leq c\tau^{-\alpha}|\theta-\pi/2|s|1-e^{-\sigma(s)s\cot\theta}e^{-\mathrm{i}s}|^{\alpha-1}.
\end{equation*}
If $s\in(0,\pi)$ is small, then Taylor's expansion yields $\beta_\tau
(e^{-\mathrm{i}s})^\alpha\approx \tau^{-\alpha}s^\alpha e^{\mathrm{i}\alpha\pi/2}$ and
$1-e^{-s\sigma(s)\cot(\theta)}e^{-\mathrm{i}s}\approx s\sigma(s)\cot(\theta)+\mathrm{i}s$
asymptotically. Consequently, we have
\begin{equation*}
  |\beta_\tau(e^{-z\tau})^\alpha-\beta_\tau(e^{-\mathrm{i} s})^\alpha|
  \le c\tau^{-\alpha}|\theta-\pi/2|s^{\alpha} \le c|\theta-\pi/2||\beta_\tau(e^{-\mathrm{i}s})^\alpha| .
\end{equation*}
Since $\beta_\tau(e^{-\mathrm{i}s})^\alpha\in\Sigma_{\alpha\pi/2}$ \cite[Proof of Theorem 6.1]{JLZ},
it follows that $\beta_\tau(e^{-z\tau})^\alpha\in \Sigma_{\phi}$ when $s$ is sufficiently small.
Meanwhile, if $s\in(0,\pi)$ is away from $0$, then $|\beta_\tau(e^{-\mathrm{i}s})^\alpha|\ge c\tau^{-\alpha}$ and so
\begin{equation*}
  |\beta_\tau(e^{-z\tau})^\alpha-\beta_\tau(e^{-\mathrm{i} s})^\alpha|
  \le c|\theta-\pi/2|\tau^{-\alpha}\le c|\theta-\pi/2| |\beta_\tau(e^{-\mathrm{i}s})^\alpha| .
\end{equation*}
By choosing $\theta\in(\pi/2,\pi)$ sufficiently close to $\pi/2$, we again have $\beta_\tau(e^{-z\tau})^\alpha\in \Sigma_{\phi}$.
The proof for the case $z=|z|e^{\mathrm{i}\theta}\in\Gamma_{\theta,-}^\tau$ is similar as the case $\Gamma_{\theta,+}^\tau$ and thus omitted.
Third and last, for $z\in \overline\Sigma_{\pi/2}\backslash\{0\}$, we have
$|e^{-z\tau}|\le 1$. In this case, \cite[Proof of Theorem 6.1]{JLZ} implies
$$
\beta_\tau(e^{-z\tau})^\alpha\in \Sigma_{\alpha\pi/2}\subset\Sigma_{\phi}.
$$
Next we show the analyticity. Since the spectrum of the operator $\Delta_h$ is contained in the
negative part of the real line, the result \eqref{angle_beta-tau} (with arbitrary
$\delta\in(0,\delta_0]$ and $\theta\in({\pi}/{2},{\pi}/{2}+\delta_0]$) implies
that the operator $(\beta_\tau(e^{-z\tau})^\alpha-\Delta_h)^{-1}$ is analytic with
respect to $z$ on the right side of the curve $\Gamma_{\theta_0,\delta_0}^\tau$,
with $\theta_0:= {\pi}/{2}+\delta_0$. The resolvent estimate \eqref{beta:resolv}
follows immediately from \eqref{Deltah-resolvent} and \eqref{angle_beta-tau}.
\end{proof}

\subsection{Error analysis for the homogeneous problem}\label{ssec:homo}
First we analyze the homogeneous problem, i.e., $f\equiv0$. By \eqref{eqn:semisol} and
Proposition \ref{lem:solurep}, we have
\begin{equation*}
  \begin{aligned}
     w_h(t_n) &=-\frac{1}{2\pi \mathrm{i}}\int_{\Gamma_{\theta,\delta}} e^{zt_n}  K(z)\Delta_h v_h  dz \quad\mbox{and}\quad
     W_h^n &= -\frac{1}{2\pi \mathrm{i}}\int_{\Gamma^{\tau}_{\theta,\delta}} e^{zt_{n}}
 \mu(e^{-z\tau}) K( \beta_\tau(e^{-z\tau}))\Delta_h v_h dz.
  \end{aligned}
\end{equation*}
Hence, the convergence analysis hinges on properly bounding the approximation error of the
kernel $K( \beta_\tau(e^{-z\tau}))$ to $K(z)$ along the contour $\Gamma_{\theta,\delta}^\tau$.
The next lemma provides the crucial estimate on $\mu$ and $\beta_\tau$.
\begin{lemma}\label{lem:est-hom-1}
Let $\alpha\in(0,1)$ be given, and $\mu(\xi)$, $\beta_\tau(\xi)$ be defined as \eqref{eqn:fun0} and
the constant $\delta_1$ be given in Lemma \ref{lem:g}. Then for $\delta\in(0,\delta_1]$ and
$\theta\in({\pi}/{2},{\pi}/{2}+\delta_1]$, we have for any $z\in \Gamma_{\theta,\delta}^\tau $
\begin{equation}\label{eqn:beta-mu}
  |\mu(e^{-z\tau})-1| \le c \tau^2 |z|^2,\quad
  |\beta_\tau(e^{-z\tau})  - z | \le c \tau^{2} |z|^{3},\quad
 \mbox{and}\quad |\beta_\tau(e^{-z\tau})^\al - z^\al| \le c \tau^{2} |z|^{2+\al},
\end{equation}
and
\begin{equation}
    c_0 |z| \le |\beta_\tau(e^{-z\tau})| \le c_1 |z|. \label{abs_beta}
\end{equation}
The constants $c_0$, $c_1$ and $c$
are independent of $\tau$, $\theta$ and $\delta$ {\rm(}but may depend on $\delta_1${\rm)}.
\end{lemma}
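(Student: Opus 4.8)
The plan is to reduce every assertion to Lemma~\ref{lem:g} together with elementary Taylor expansions, exploiting that $|z|\tau\le\pi/\sin\theta\le c$ for $z\in\Gamma_{\theta,\delta}^\tau$ so that all higher powers of $z\tau$ are harmless. I would write $g(z):=(1-\tfrac\alpha2+\tfrac\alpha2 e^{-z\tau})^{1/\alpha}$, so that $\beta_\tau(e^{-z\tau})=(1-e^{-z\tau})/(\tau g(z))$, $\beta_\tau(e^{-z\tau})^\alpha=(1-e^{-z\tau})^\alpha/(\tau^\alpha g(z)^\alpha)$ and $\mu(e^{-z\tau})=(3e^{-z\tau}-e^{-2z\tau})/(2 g(z))$. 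By \eqref{eqn:est-g-1}, both $|g(z)|$ and $|g(z)^\alpha|=|1-\tfrac\alpha2+\tfrac\alpha2 e^{-z\tau}|$ lie between two positive constants, while by \eqref{eqn:est-g-2}, $g(z)=1-\tfrac{z\tau}{2}+r(z)$ with $|r(z)|\le c\tau^2|z|^2$; these two facts carry most of the argument.

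For the three bounds in \eqref{eqn:beta-mu} I would use a single device: express the relevant difference over the common denominator and cancel its leading terms. For $\mu$, Taylor expansion gives $\tfrac12(3e^{-z\tau}-e^{-2z\tau})=1-\tfrac{z\tau}{2}-\tfrac14(z\tau)^2+O(|z\tau|^3)$; subtracting $g(z)$ the part $1-\tfrac{z\tau}{2}$ cancels, the remainder has modulus $\le c\tau^2|z|^2$ (using $|z\tau|^3\le c\tau^2|z|^2$), and dividing by $g(z)$, bounded below, yields $|\mu(e^{-z\tau})-1|\le c\tau^2|z|^2$. For $\beta_\tau$, write $\beta_\tau(e^{-z\tau})-z=\big((1-e^{-z\tau})-z\tau g(z)\big)/(\tau g(z))$; since $1-e^{-z\tau}=z\tau-\tfrac12(z\tau)^2+O(|z\tau|^3)$ and $z\tau g(z)=z\tau-\tfrac12(z\tau)^2+z\tau r(z)$, the numerator is $O(|z\tau|^3)=O(\tau^3|z|^3)$, and division by $\tau|g(z)|$ gives $|\beta_\tau(e^{-z\tau})-z|\le c\tau^2|z|^3$. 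For $\beta_\tau^\alpha$ no further expansion is needed: $\beta_\tau(e^{-z\tau})^\alpha-z^\alpha=\big((1-e^{-z\tau})^\alpha-\tau^\alpha z^\alpha(1-\tfrac\alpha2+\tfrac\alpha2 e^{-z\tau})\big)/\big(\tau^\alpha(1-\tfrac\alpha2+\tfrac\alpha2 e^{-z\tau})\big)$, whose numerator is $\le c|z|^{2+\alpha}\tau^{2+\alpha}$ by \eqref{eqn:est-g-3} and whose denominator has modulus $\ge c\tau^\alpha$, so $|\beta_\tau(e^{-z\tau})^\alpha-z^\alpha|\le c\tau^2|z|^{2+\alpha}$.

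For \eqref{abs_beta}, the upper estimate is immediate from the bound on $\beta_\tau$ just obtained: $|\beta_\tau(e^{-z\tau})|\le|z|+c\tau^2|z|^3=|z|\big(1+c(\tau|z|)^2\big)\le c_1|z|$. The lower estimate reduces, via $|\beta_\tau(e^{-z\tau})|=|1-e^{-z\tau}|/(\tau|g(z)|)$ and the upper bound on $|g(z)|$, to showing $|1-e^{-z\tau}|\ge c\,\tau|z|$ on $\Gamma_{\theta,\delta}^\tau$; this is the one ingredient not handed to us by Lemma~\ref{lem:g}. I would obtain it by recalling that $w\mapsto w/(1-e^{-w})$ is analytic and nowhere vanishing on the strip $\{|\Im w|<2\pi\}$ (its only candidate zero, $w=0$, being a removable singularity with value $1$), so that $|(1-e^{-w})/w|$ is bounded below by a positive constant on any set $\{w:|\Im w|\le\pi,\ \Re w\le M\}$ (there it is continuous, nonvanishing, and tends to $+\infty$ as $\Re w\to-\infty$). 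For every $z\in\Gamma_{\theta,\delta}^\tau$ with $\delta\le\delta_1$ and $\theta\in(\pi/2,\pi/2+\delta_1]$ the point $w=z\tau$ lies in this set with $M=\delta_1 T$, since $|\Im(z)\tau|\le\pi$ by the definition of $\Gamma_{\theta,\delta}^\tau$ and $\Re(z)\tau\le\delta\tau\le\delta_1 T$ on the circular arc while $\Re z<0$ on the two rays. This yields $|1-e^{-z\tau}|\ge c\,\tau|z|$ with $c$ depending only on $\delta_1$, hence $|\beta_\tau(e^{-z\tau})|\ge c_0|z|$.

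I expect the only genuinely delicate point to be this last uniformity claim: one must verify that, as $z$ sweeps $\Gamma_{\theta,\delta}^\tau$, the scaled argument $z\tau$ never leaves a fixed subset of the strip $\{|\Im w|<2\pi\}$ on which $(1-e^{-w})/w$ stays bounded away from $0$, uniformly in $\tau$, $\theta$ and $\delta$ — in particular $\Re(z\tau)$ must be bounded above independently of $\tau$. Everything else is routine bookkeeping of Taylor remainders, all dominated by powers of $|z|\tau\le c$, together with the two-sided bounds on $g(z)$ already furnished by Lemma~\ref{lem:g}.
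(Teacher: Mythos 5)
Your proposal is correct and follows essentially the same route as the paper: the three bounds in \eqref{eqn:beta-mu} are obtained exactly as intended, namely by combining the estimates \eqref{eqn:est-g-1}--\eqref{eqn:est-g-3} of Lemma \ref{lem:g} with elementary Taylor expansions in $z\tau$ (using $|z|\tau\le c$ on $\Gamma_{\theta,\delta}^\tau$), and the upper bound in \eqref{abs_beta} by the triangle inequality from the second estimate. The only deviation is the lower bound in \eqref{abs_beta}: the paper simply cites $c_0|z|\le |(1-e^{-z\tau})/\tau|\le c_1|z|$ from an earlier work and combines it with the boundedness of $|1-\tfrac{\alpha}{2}+\tfrac{\alpha}{2}e^{-z\tau}|$, whereas you prove this ingredient directly via the nonvanishing of $(1-e^{-w})/w$ on the half-strip $\{|\Im w|\le\pi,\ \Re w\le M\}$ (with $|\Im(z\tau)|\le\pi$ guaranteed by the truncation of the contour), which is a correct, self-contained substitute.
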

\begin{proof}
The three estimates in \eqref{eqn:beta-mu} are direct consequences of Lemma \ref{lem:g}.
The upper bound in \eqref{abs_beta} follows from
Lemma \ref{lem:est-hom-1} and the triangle equality
\begin{align*}
  \begin{aligned}
    |\beta_\tau(e^{-z\tau})|
    &\le (|\beta_\tau(e^{-z\tau}) -z | + |z| ) \le (1+ c^2\tau^2 |z|^2) |z| \\
    &\le
    \left\{\begin{array}{ll}
    (1+c^2\tau^2\delta^2)  |z|, &\mbox{for}\,\,\,z\in\Gamma_\delta,\\
    (1+c^2(\pi/\sin\theta)^2)  |z|, &\mbox{for}\,\,\,z\in\Gamma_\theta .
    \end{array}\right.
  \end{aligned}
\end{align*}
Since $c_0 |z| \le |\tfrac{1-e^{-z\tau}}{\tau}|\le c_1|z|$ \cite[Lemma 3.1]{JinLazarovZhou:2016ima},
the lower bound in \eqref{abs_beta} follows from the fact that $| 1-\tfrac\al2+\tfrac\al2 e^{-z\tau } |$ is
uniformly bounded from below in $\tau$ for all $z\in \Gamma_{\theta,\delta}^\tau $, cf. Lemma \ref{lem:g}.
\end{proof}

Using Lemmas \ref{lem:est-hom-2} and \ref{lem:est-hom-1}, we have the following error estimate of the
kernel $K(\beta_\tau(e^{-z\tau}))$.
\begin{lemma}\label{lem:est-hom-3}
Let $\delta_0$ and $\delta_1$ be defined in Proposition \ref{lem:solurep} and Lemma \ref{lem:g}, respectively.
Then by choosing $\delta=\min(\delta_0,\delta_1)$ and $\theta={\pi}/{2}+\delta$, we have
\begin{equation*}
\| \mu(e^{-z\tau}) K( \beta_\tau(e^{-z\tau}))- K(z) \| \le   c \tau^2 |z|^{1-\al},\quad \forall\,z\in \Gamma_{\theta,\delta}^\tau ,
\end{equation*}
where the constant $c$ is independent of $\tau$.
\end{lemma}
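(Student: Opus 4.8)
The plan is to compare the two kernels by a single telescoping decomposition and then estimate each piece using Lemmas \ref{lem:est-hom-1} and \ref{lem:est-hom-2}, which supply exactly the kernel-difference bounds and resolvent bounds needed. Abbreviate $\beta:=\beta_\tau(e^{-z\tau})$ and $\mu:=\mu(e^{-z\tau})$, and recall $K(z)=-z^{-1}(z^\al-\Delta_h)^{-1}$, so $\mu K(\beta)=-\mu\beta^{-1}(\beta^\al-\Delta_h)^{-1}$. Adding and subtracting $z^{-1}(\beta^\al-\Delta_h)^{-1}$ gives
\begin{equation*}
  \mu(e^{-z\tau})K(\beta_\tau(e^{-z\tau}))-K(z)
  = \big(z^{-1}-\mu\beta^{-1}\big)(\beta^\al-\Delta_h)^{-1}
   - z^{-1}\big[(\beta^\al-\Delta_h)^{-1}-(z^\al-\Delta_h)^{-1}\big] =: \mathrm{I}+\mathrm{II}.
\end{equation*}
Both terms will be shown to be $O(\tau^2|z|^{1-\al})$.

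For the scalar factor in $\mathrm{I}$, write $z^{-1}-\mu\beta^{-1}=\beta^{-1}\big((\beta z^{-1}-1)+(1-\mu)\big)=\beta^{-1}\big(z^{-1}(\beta-z)+(1-\mu)\big)$. By the choice $\delta=\min(\delta_0,\delta_1)$, $\theta=\pi/2+\delta$, Lemma \ref{lem:est-hom-1} is applicable and yields $|\beta-z|\le c\tau^2|z|^3$, $|1-\mu|\le c\tau^2|z|^2$, and $c_0|z|\le|\beta|\le c_1|z|$, hence $|\beta^{-1}|\le c|z|^{-1}$; combining these gives $|z^{-1}-\mu\beta^{-1}|\le c|z|^{-1}(c\tau^2|z|^2+c\tau^2|z|^2)\le c\tau^2|z|$. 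For the operator factor, Lemma \ref{lem:est-hom-2} guarantees $\beta^\al\in\Sigma_\phi$ and the resolvent bound \eqref{beta:resolv}, which together with \eqref{abs_beta} gives $\|(\beta^\al-\Delta_h)^{-1}\|\le c|\beta|^{-\al}\le c|z|^{-\al}$. Therefore $\|\mathrm{I}\|\le c\tau^2|z|\cdot c|z|^{-\al}=c\tau^2|z|^{1-\al}$.

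For $\mathrm{II}$ I use the resolvent identity $(\beta^\al-\Delta_h)^{-1}-(z^\al-\Delta_h)^{-1}=(z^\al-\beta^\al)(\beta^\al-\Delta_h)^{-1}(z^\al-\Delta_h)^{-1}$. Here $|z^\al-\beta^\al|\le c\tau^2|z|^{2+\al}$ by Lemma \ref{lem:est-hom-1}; $\|(\beta^\al-\Delta_h)^{-1}\|\le c|z|^{-\al}$ as above; and since $z\in\Gamma_{\theta,\delta}^\tau\subset\Sigma_\theta$, the resolvent estimate \eqref{eqn:resol} gives $\|(z^\al-\Delta_h)^{-1}\|\le c|z|^{-\al}$. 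Hence $\|\mathrm{II}\|\le|z|^{-1}\cdot c\tau^2|z|^{2+\al}\cdot c|z|^{-\al}\cdot c|z|^{-\al}=c\tau^2|z|^{1-\al}$. Adding the bounds on $\mathrm{I}$ and $\mathrm{II}$ completes the proof, the constant being independent of $\tau$ because all the invoked estimates are.

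The argument is essentially mechanical once Lemmas \ref{lem:est-hom-1} and \ref{lem:est-hom-2} are in hand; the only points requiring care are (i) checking that the chosen $\delta,\theta$ lie within the validity ranges of all three prior results (Proposition \ref{lem:solurep}, Lemma \ref{lem:g}/\ref{lem:est-hom-1}, Lemma \ref{lem:est-hom-2}), so that $\beta_\tau(e^{-z\tau})^\al$ stays in a sector on which $(\beta_\tau(e^{-z\tau})^\al-\Delta_h)^{-1}$ is bounded, and (ii) bookkeeping the powers of $|z|$ so that the singular factor $z^{-1}$ in $K$ is exactly absorbed. The genuine analytic difficulty — controlling $\beta_\tau(e^{-z\tau})^\al$ off the imaginary axis — has already been dispatched in Lemma \ref{lem:est-hom-2}, so no new obstacle arises here.
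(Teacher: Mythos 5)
Your proof is correct and follows essentially the same route as the paper: a triangle-inequality/telescoping split of $\mu K(\beta_\tau)-K(z)$ into a scalar-difference term and a resolvent-difference term, each bounded via the estimates of Lemma \ref{lem:est-hom-1}, the resolvent bounds \eqref{eqn:resol} and \eqref{beta:resolv}, and the standard resolvent identity. The only cosmetic difference is that you fold the factor $\mu$ into the scalar difference $z^{-1}-\mu\beta_\tau^{-1}$, whereas the paper first peels off $|\mu-1|\,\|K(z)\|$ and then estimates $\|K(\beta_\tau)-K(z)\|$; the ingredients and the resulting bound $c\tau^2|z|^{1-\al}$ are identical.
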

\begin{proof}
By the triangle inequality, we obtain
\begin{equation*}
\begin{split}
     \| \mu(e^{-z\tau}) K( \beta_\tau(e^{-z\tau}))- K(z) \|
     \le  | \mu(e^{-z\tau})-1| \|  K(z) \| + |\mu(e^{-z\tau})| \| K( \beta_\tau(e^{-z\tau}))- K(z) \|=:\mathrm{ I}+ \mathrm{II}.
\end{split}
\end{equation*}
The bound on the first term $\mathrm{I}$ follows from
\eqref{eqn:resol} and Lemma \ref{lem:est-hom-1}. Appealing to
Lemma \ref{lem:est-hom-1} again yields
\begin{equation}\label{eqn:difc1}
 |\beta_\tau(e^{-z\tau})^{-1}-z^{-1}|= |z-\beta_\tau(e^{-z\tau})| |\beta_\tau(e^{-z\tau})|^{-1} |z|^{-1} \le c\tau^2|z|.
\end{equation}
Similarly, by using \eqref{eqn:resol} and \eqref{beta:resolv}, and Lemma \ref{lem:est-hom-1}, and the
identity $(\beta_\tau(e^{-z\tau})^\al-\Delta_h)^{-1}-(z^\al-\Delta_h)^{-1} = (z^\alpha-\beta_{\tau}(e^{-z\tau}))(\beta_\tau(e^{-z\tau})^\alpha-\Delta_h)^{-1}(z^\alpha-\Delta_h)^{-1} $, we obtain
\begin{equation}\label{eqn:difc2}
\begin{aligned}
 & \| (\beta_\tau(e^{-z\tau})^\al-\Delta_h)^{-1}-(z^\al-\Delta_h)^{-1} \| \\
 \le& |\beta_\tau(e^{-z\tau})^\al - z^\al |
 \|(\beta_\tau(e^{-z\tau})^\al-\Delta_h)^{-1}\|\|(z^\al-\Delta_h)^{-1} \| \\
 \le& c\tau^2|z|^{2+\alpha}\|(\beta_\tau(e^{-z\tau})^\al-\Delta_h)^{-1}\|
 \|(z^\al-\Delta_h)^{-1} \| \le c\tau^2 |z|^{2-\al},
\end{aligned}
\end{equation}
and hence, the second term $\mathrm{II}$ can be bounded by
\begin{equation*}
\mathrm{II} \le c|\beta_\tau(e^{-z\tau})^{-1}-z^{-1}| \| (z^\al-\Delta_h)^{-1}  \| + c |z|^{-1}\| (\beta_\tau(e^{-z\tau})^\al-\Delta_h)^{-1}-(z^\al-\Delta_h)^{-1}\|
 \le c\tau^2 |z|^{1-\al},
\end{equation*}
which completes the proof of the lemma.
\end{proof}

Now we state the temporal error for smooth initial data $v\in D(\Delta)$.
\begin{theorem}\label{thm:v-smooth}
Let $f=0$, and $u_h$ and $U_h^n$ be the solutions of \eqref{eqn:fdesemidis} and \eqref{eqn:fully-mod},
respectively, with $v\in D(\Delta)$ and $U_h^0= v_h:=R_hv$. Then there holds
\begin{equation*}
   \| u_h(t_n)-U_h^n \|_{L^2(\Omega)} \le c  \tau^2 t_n^{\al-2} \| \Delta v \|_{L^2(\Omega)}, \quad n \ge 1.
\end{equation*}
\end{theorem}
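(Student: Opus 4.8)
The plan is to combine the two integral representations for $w_h(t_n)=u_h(t_n)-v_h$ and $W_h^n=U_h^n-v_h$ in the homogeneous case, and to estimate the difference through the kernel error already established in Lemma \ref{lem:est-hom-3}. First I would recall from the displayed formulas at the start of Section \ref{ssec:homo} that
\begin{equation*}
  w_h(t_n)=-\frac{1}{2\pi\mathrm{i}}\int_{\Gamma_{\theta,\delta}}e^{zt_n}K(z)\Delta_hv_h\,dz,\qquad
  W_h^n=-\frac{1}{2\pi\mathrm{i}}\int_{\Gamma^\tau_{\theta,\delta}}e^{zt_n}\mu(e^{-z\tau})K(\beta_\tau(e^{-z\tau}))\Delta_hv_h\,dz,
\end{equation*}
where the parameters $\delta=\min(\delta_0,\delta_1)$ and $\theta=\pi/2+\delta$ are chosen as in Lemma \ref{lem:est-hom-3}, so that all the kernel estimates of that lemma and the resolvent bound \eqref{eqn:resol} apply on the truncated contour. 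Since $u_h(t_n)-U_h^n = w_h(t_n)-W_h^n$, I would split this into a ``tail'' contribution, namely the part of $\Gamma_{\theta,\delta}$ with $|\Im z|>\pi/\tau$ that is present in the first integral but not in the truncated contour $\Gamma^\tau_{\theta,\delta}$, and a ``main'' contribution on $\Gamma^\tau_{\theta,\delta}$ where the integrands are compared directly.

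For the main part, on $\Gamma^\tau_{\theta,\delta}$ the integrand difference is $e^{zt_n}\bigl(\mu(e^{-z\tau})K(\beta_\tau(e^{-z\tau}))-K(z)\bigr)\Delta_hv_h$, whose operator norm is bounded by $c\tau^2|z|^{1-\alpha}\|\Delta_hv_h\|$ using Lemma \ref{lem:est-hom-3}; here I would use that $\|\Delta_hv_h\|=\|\Delta_hR_hv\|\le c\|\Delta v\|$ by the standard stability of the Ritz projection in this norm. Then I would estimate the contour integral of $e^{\Re(z)t_n}|z|^{1-\alpha}$ over $\Gamma^\tau_{\theta,\delta}$ in the usual way: parametrize the two rays by $z=re^{\pm\mathrm{i}\theta}$ with $r\in[\delta,\pi/(\tau\sin\theta)]$ and the small arc by $z=\delta e^{\mathrm{i}\varphi}$, use $\Re(z)=r\cos\theta\le -cr$ on the rays (since $\theta>\pi/2$) and $|e^{zt_n}|\le e^{\delta t_n}\le c$ on the arc (bounded since $\delta t_n\le\delta T$), and obtain
\begin{equation*}
  \int_{\Gamma^\tau_{\theta,\delta}}|e^{zt_n}|\,|z|^{1-\alpha}\,|dz|\le c\int_\delta^\infty e^{-crt_n}r^{1-\alpha}\,dr + c\,\delta^{2-\alpha}\le c\,t_n^{\alpha-2},
\end{equation*}
after the substitution $r=s/t_n$ and using $\delta\le c/t_n$ is \emph{not} needed because the arc term is a constant times $\delta^{2-\alpha}$ which, since $t_n\le T$, is itself $\le c t_n^{\alpha-2}$ up to absorbing constants — more carefully, one chooses to keep $\delta$ fixed and notes the whole bound is dominated by $c t_n^{\alpha-2}$ for $t_n\le T$. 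This yields the contribution $c\tau^2 t_n^{\alpha-2}\|\Delta v\|$.

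The tail part is the place that needs a little care but is by now standard: on the portion of $\Gamma_{\theta,\delta}$ with $|\Im z|>\pi/\tau$ one has $|z|\ge c/\tau$, and using only the bound $\|K(z)\|\le c|z|^{-1-\alpha}$ from \eqref{eqn:resol} together with $\Re(z)\le -c|z|$ there, the integral of $|e^{zt_n}|\,|z|^{-1-\alpha}$ over this tail is bounded by $c\int_{c/\tau}^\infty e^{-crt_n}r^{-1-\alpha}\,dr\le c(\tau)^{1+\alpha}\int_{c/\tau}^\infty e^{-crt_n}\,dr\cdot(\text{adjust})$; more simply, since $r^{-1-\alpha}\le c\tau^{2}r^{1-\alpha}$ for $r\ge c/\tau$, the tail is controlled by the same $c\tau^2 t_n^{\alpha-2}$ bound as the main term, and hence absorbed. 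Collecting the two pieces and the norm bound on $\Delta_hv_h$ gives $\|u_h(t_n)-U_h^n\|_{L^2(\Omega)}\le c\tau^2 t_n^{\alpha-2}\|\Delta v\|_{L^2(\Omega)}$, which is the claim. I expect the main obstacle to be purely bookkeeping — correctly matching the two contours, justifying the truncation of $\Gamma_{\theta,\delta}$ to $\Gamma^\tau_{\theta,\delta}$ via the periodicity and analyticity already furnished by Proposition \ref{lem:solurep} and Lemma \ref{lem:est-hom-2}, and verifying that every $\tau$-dependent constant appearing in the ray length $\pi/(\tau\sin\theta)$ and the lower endpoint $\delta$ interacts correctly with the exponential decay to produce the clean $t_n^{\alpha-2}$ factor.
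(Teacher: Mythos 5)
Your proposal follows essentially the same route as the paper's own proof: the identical split into the compared integrands on $\Gamma^\tau_{\theta,\delta}$ (estimated via Lemma \ref{lem:est-hom-3}) plus the truncated tail of $\Gamma_{\theta,\delta}$ (estimated via \eqref{eqn:resol} and $r^{-1-\alpha}\le c\tau^2 r^{1-\alpha}$ for $r\ge c/\tau$), finishing with $\|\Delta_h R_h v\|\le \|\Delta v\|$ from $\Delta_hR_h=P_h\Delta$. The only cosmetic deviation is keeping $\delta$ fixed on the small arc instead of choosing $\delta\le 1/t_n$ as in the paper, which is harmless since the constant may depend on $T$.
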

\begin{proof}
With the constants $\delta_0$ and $\delta_1$ given in Proposition \ref{lem:solurep} and Lemma
\ref{lem:g}, respectively, we choose $\delta=\min(\delta_0,\delta_1)$ and $\theta=
{\pi}/{2}+\delta$. By \eqref{eqn:semisol} and Proposition \ref{lem:solurep}, we split
the error into
\begin{equation*}
\begin{split}
 u_h(t_n)-U_h^n
 &=  -\frac{1}{2\pi\mathrm{i}} \int_{\Gamma_{\theta,\delta}^\tau } e^{zt_n}
  \left( K(z)-\mu(e^{-zt})K(\beta_\tau(e^{-z\tau}))\right)\Delta_h v_h\,dz\\
  &\quad -\frac{1}{2\pi\mathrm{i}} \int_{\Gamma_{\theta,\delta}\backslash\Gamma_{\theta,\delta}^\tau } e^{zt_n}K(z)\Delta_h v_h dz
  =: \mathrm{I} + \mathrm{II}.
\end{split}
\end{equation*}
By Lemma \ref{lem:est-hom-3} and choosing $\delta \le 1/t_n$, we bound the first term $\mathrm{I}$ by
\begin{equation*}
\begin{split}
    \| \mathrm{I} \|_{L^2(\Omega)} &\le c \tau^2 \| \Delta_hv_h \|_{L^2(\Omega)}\bigg(\int_{\delta}^{\pi/(\tau\sin\theta)}  e^{r t_n\cos\theta} r^{1-\alpha}dr
     + \int_{-\theta}^{\theta}   e^{\delta t_n|\cos\psi|} \delta^{2-\alpha}d\psi \bigg)\\
     &\le c (t_n^{\alpha-2}+\delta^{2-\alpha})\tau^2 \|\Delta_hv_h\|_{L^2(\Omega)}
    \le c \tau^2 t_n^{\alpha-2} \|\Delta_hv_h\|_{L^2(\Omega)} .
\end{split}
\end{equation*}
For the second term $\mathrm{II}$, by the estimate \eqref{eqn:resol} and the
change of variables $s=rt_n$, we obtain
\begin{equation*}
\begin{split}
    \| \mathrm{II} \|_{L^2(\Omega)} &\le c \| \Delta_hv_h \|_{L^2(\Omega)} \int_{\pi/(\tau\sin\theta)}^\infty  e^{r t_n\cos\theta} r^{-\alpha-1} \,dr\\
    &{\le c \tau^2 \| \Delta_hv_h \|_{L^2(\Omega)} \int_{0}^\infty  e^{r t_n\cos\theta} r^{1-\alpha} \,dr\qquad (\because r\geq \pi/(\tau\sin\theta))}\\
    &{ \leq c\tau^2 t_n^{\alpha-2} \|\Delta_hv_h\| \int_0^\infty e^{s\cos\theta}s^{1-\alpha}ds \le  { c\tau^2} t_n^{\alpha-2} \| \Delta_hv_h \|_{L^2(\Omega)},}
\end{split}
\end{equation*}
{where the last inequality follows since for $\theta\in(\pi/2,\pi)$ and $\alpha\in(0,1)$, the integral $\int_0^\infty e^{s\cos\theta}s^{1-\alpha}ds<c$.}
Now the desired estimate follows from the triangle inequality and the
identity $\Delta_hR_h=P_h\Delta$.
\end{proof}

Next, we turn to nonsmooth initial data $v\in L^2(\Omega)$.
We begin with an estimate on the kernel.

\begin{lemma}\label{lem:est-hom-4}
Let $K_s(z)=  (z^{\al}-\Delta_h)^{-1}\Delta_h.$ By choosing
$\delta=\min(\delta_0,\delta_1)$ and $\theta={\pi}/{2}+\delta$,
there exists a $c>0$ independent of $\tau$ such that
\begin{equation*}
\| \mu(e^{-z\tau}) \beta_\tau(e^{-z\tau})^{-1} K_s( \beta_\tau(e^{-z\tau}))- z^{-1}K_s(z) \| \le   c \tau^2 |z| ,\quad \forall\, z\in \Gamma_{\theta,\delta}^\tau .
\end{equation*}
\end{lemma}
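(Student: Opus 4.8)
The plan is to mimic the proof of Lemma \ref{lem:est-hom-3}, but now handling the rescaled kernel $z^{-1}K_s(z) = -K(z)\Delta_h$ that appears in the nonsmooth-data solution representation. First I would write $\mu(e^{-z\tau})\beta_\tau(e^{-z\tau})^{-1}K_s(\beta_\tau(e^{-z\tau})) - z^{-1}K_s(z)$ and insert the intermediate term $z^{-1}K_s(\beta_\tau(e^{-z\tau}))$ (or equivalently split off the factor $\mu-1$ first), so that by the triangle inequality the quantity is bounded by
\begin{equation*}
  |\mu(e^{-z\tau})-1|\,\|z^{-1}K_s(z)\| + |\mu(e^{-z\tau})|\,\| \beta_\tau(e^{-z\tau})^{-1}K_s(\beta_\tau(e^{-z\tau})) - z^{-1}K_s(z)\| =: \mathrm{I}+\mathrm{II}.
\end{equation*}
For $\mathrm{I}$, note $\|K_s(z)\| = \|(z^\alpha-\Delta_h)^{-1}\Delta_h\| \le \|I + z^\alpha(z^\alpha-\Delta_h)^{-1}\| \le c$ by the resolvent bound \eqref{eqn:resol}, hence $\|z^{-1}K_s(z)\|\le c|z|^{-1}$, and combined with $|\mu(e^{-z\tau})-1|\le c\tau^2|z|^2$ from Lemma \ref{lem:est-hom-1} this gives $\mathrm{I}\le c\tau^2|z|$.

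For $\mathrm{II}$, the key is that $\mu$ is bounded on the contour (a consequence of $|\mu(e^{-z\tau})-1|\le c\tau^2|z|^2$ together with $|z|\tau\le c$ on $\Gamma_{\theta,\delta}^\tau$), so it suffices to bound $\| \beta_\tau(e^{-z\tau})^{-1}K_s(\beta_\tau(e^{-z\tau})) - z^{-1}K_s(z)\|$. I would split this again as
\begin{equation*}
  |\beta_\tau(e^{-z\tau})^{-1} - z^{-1}|\,\|K_s(\beta_\tau(e^{-z\tau}))\| + |z|^{-1}\,\|K_s(\beta_\tau(e^{-z\tau})) - K_s(z)\|.
\end{equation*}
The first piece uses $|\beta_\tau(e^{-z\tau})^{-1}-z^{-1}|\le c\tau^2|z|$ (proved in \eqref{eqn:difc1}) and the uniform bound $\|K_s(\beta_\tau(e^{-z\tau}))\|\le c$, which holds because $\beta_\tau(e^{-z\tau})^\alpha\in\Sigma_\phi$ with $\phi<\pi$ by Lemma \ref{lem:est-hom-2}, so the same argument as for $\|K_s(z)\|\le c$ applies with $z^\alpha$ replaced by $\beta_\tau(e^{-z\tau})^\alpha$; this contributes $c\tau^2|z|$. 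For the second piece I would use the resolvent identity, writing $K_s(\beta_\tau(e^{-z\tau}))-K_s(z) = (\beta_\tau(e^{-z\tau})^\alpha-\Delta_h)^{-1}\Delta_h - (z^\alpha-\Delta_h)^{-1}\Delta_h$. Using $(\beta_\tau^\alpha-\Delta_h)^{-1}-(z^\alpha-\Delta_h)^{-1} = (z^\alpha-\beta_\tau^\alpha)(\beta_\tau^\alpha-\Delta_h)^{-1}(z^\alpha-\Delta_h)^{-1}$ together with $|\beta_\tau(e^{-z\tau})^\alpha-z^\alpha|\le c\tau^2|z|^{2+\alpha}$, the resolvent bounds \eqref{eqn:resol} and \eqref{beta:resolv} with $|\beta_\tau(e^{-z\tau})|\asymp|z|$ from \eqref{abs_beta}, and the boundedness of $\Delta_h(z^\alpha-\Delta_h)^{-1}$, I get $\|K_s(\beta_\tau(e^{-z\tau}))-K_s(z)\|\le c\tau^2|z|^{2+\alpha}\cdot|z|^{-\alpha}\cdot|z|^{-\alpha}\cdot|z|^\alpha = c\tau^2|z|^2$, so the second piece contributes $c\tau^2|z|$ as well. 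Collecting terms yields $\mathrm{II}\le c\tau^2|z|$, and the lemma follows.

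The main obstacle, as in Lemma \ref{lem:est-hom-3}, is organizing the commutator/resolvent-difference estimate for $K_s$ so that the extra factor of $\Delta_h$ is absorbed correctly: one must exploit that $\Delta_h(z^\alpha-\Delta_h)^{-1} = -I + z^\alpha(z^\alpha-\Delta_h)^{-1}$ is uniformly bounded (and likewise with $\beta_\tau(e^{-z\tau})^\alpha$ in place of $z^\alpha$), rather than naively estimating $\|\Delta_h\|$, which would blow up with $h$. All the requisite ingredients — the sector mapping \eqref{angle_beta-tau}, the resolvent estimates \eqref{eqn:resol} and \eqref{beta:resolv}, the kernel-difference bounds in \eqref{eqn:beta-mu}–\eqref{abs_beta}, and the pieces \eqref{eqn:difc1}–\eqref{eqn:difc2} — are already available, so beyond careful bookkeeping of powers of $|z|$ the argument is routine.
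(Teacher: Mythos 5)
Your proposal is correct and follows essentially the same route as the paper: a triangle-inequality splitting that isolates the scalar factors (controlled by Lemma \ref{lem:est-hom-1} and \eqref{eqn:difc1}), the uniform bounds $\|K_s(z)\|\le c$ and $\|K_s(\beta_\tau(e^{-z\tau}))\|\le c$ via the identity $K_s(\zeta)=-I+\zeta^\alpha(\zeta^\alpha-\Delta_h)^{-1}$ and the resolvent estimates, and a resolvent-difference argument giving $\|K_s(\beta_\tau(e^{-z\tau}))-K_s(z)\|\le c\tau^2|z|^2$. The only differences are cosmetic (the paper groups $\mu\beta_\tau^{-1}$ together and invokes \eqref{eqn:difc2}, while you split off $\mu-1$ first and absorb $\Delta_h$ into $(z^\alpha-\Delta_h)^{-1}\Delta_h$ directly), so the argument is sound as written.
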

\begin{proof}
By the triangle inequality and Lemma \ref{lem:est-hom-1}, we have
\begin{equation*}
\begin{split}
&\| \mu(e^{-z\tau}) \beta_\tau(e^{-z\tau})^{-1} K_s( \beta_\tau(e^{-z\tau}))- z^{-1}K_s(z) \| \\
\le& | \mu(e^{-z\tau}) \beta_\tau(e^{-z\tau})^{-1}  - z^{-1}| \| K_s(z) \|
+|\mu(e^{-z\tau}) \beta_\tau(e^{-z\tau})^{-1}|\| K_s( \beta_\tau(e^{-z\tau}))-K_s( z)  \| \\
\le&| \mu(e^{-z\tau}) \beta_\tau(e^{-z\tau})^{-1}  - z^{-1}| \| K_s(z) \|
+{c}|z|^{-1}\| K_s( \beta_\tau(e^{-z\tau}))-K_s( z)  \| =: \mathrm{I} + {c}\mathrm{II}.
\end{split}
\end{equation*}
The first term $\mathrm{I}$ can be bounded directly using Lemma \ref{lem:est-hom-1}, \eqref{eqn:difc1}
and the inequality $   \|  K_s(z)  \| = \| I-z^\al(z^\al -\Delta_h)^{-1}   \| \le c.$
For the second term $\mathrm{II}$, it suffices to show
\begin{equation*}
  |z|\mathrm{II}=  \| K_s( \beta_\tau(e^{-z\tau}))-K_s( z)  \| \le c\tau^2|z|^2.
\end{equation*}
Using \eqref{eqn:resol}, triangle inequality, Lemma \ref{lem:est-hom-1} and \eqref{eqn:difc2}, we get
\begin{equation*}
\begin{split}
  |z|\mathrm{II}
   & = \|  \beta_\tau(e^{-z\tau})^{\al}( \beta_\tau(e^{-z\tau})^{\al} -\Delta_h)^{-1}  - z^\al(z^\al-\Delta_h)^{-1}  \| \\
   &\le | z^\al -  \beta_\tau(e^{-z\tau})^{\al}| \| ( z^{\al} -\Delta_h)^{-1} \|
   + |\beta_\tau(e^{-z\tau})|^\al \|   ( \beta_\tau(e^{-z\tau})^{\al} -\Delta_h)^{-1}-(z^\al-\Delta_h)^{-1}\| \\
   &\le c | z^\al -  \beta_\tau(e^{-z\tau})^{\al}| \| ( z^{\al} -\Delta_h)^{-1} \|
   +{c} |z|^\al \|   ( \beta_\tau(e^{-z\tau})^{\al} -\Delta_h)^{-1}-(z^\al-\Delta_h)^{-1} \| \le c\tau^2|z|^2.
\end{split}
\end{equation*}
Now the triangle inequality completes the proof of the lemma.
\end{proof}

Now we can state the temporal error for nonsmooth initial data $v\in L^2(\Omega)$.
\begin{theorem}\label{thm:v-nonsmooth}
Let $f=0$, $u_h$ and $U_h^n$ be the solutions of \eqref{eqn:fdesemidis} and \eqref{eqn:fully-mod} with
$ v \in L^2(\Omega) $, and $U_h^0= v_h=P_hv$, respectively. Then, there holds
\begin{equation*}
   \| u_h(t_n)-U_h^n \|_{L^2(\Omega)} \le c\tau^2 t_n^{-2} \| v \|_{L^2(\Omega)}, \quad n \ge 1.
\end{equation*}
\end{theorem}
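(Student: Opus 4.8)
The plan is to repeat the argument of Theorem \ref{thm:v-smooth}, but first to rearrange the solution representations so that the data enters as $v_h=P_hv$ rather than as $\Delta_h v_h$ (which cannot be controlled by $\|v\|_{L^2(\Omega)}$ for merely $L^2$ data). The device is the algebraic identity $K(z)\Delta_h = -z^{-1}(z^\al-\Delta_h)^{-1}\Delta_h = -z^{-1}K_s(z)$, with $K_s(z)=(z^\al-\Delta_h)^{-1}\Delta_h$ the kernel from Lemma \ref{lem:est-hom-4}. Applying this to \eqref{eqn:semisol} and to the representation \eqref{Rep-Wh} of Proposition \ref{lem:solurep} with $f\equiv0$ (so that $F_h^0=0$ and $\widetilde G_h\equiv0$), and recalling $u_h(t_n)-U_h^n=w_h(t_n)-W_h^n$, I would obtain $w_h(t_n)=\frac{1}{2\pi\mathrm i}\int_{\Gamma_{\theta,\delta}}e^{zt_n}z^{-1}K_s(z)v_h\,dz$ and $W_h^n=\frac{1}{2\pi\mathrm i}\int_{\Gamma^\tau_{\theta,\delta}}e^{zt_n}\mu(e^{-z\tau})\beta_\tau(e^{-z\tau})^{-1}K_s(\beta_\tau(e^{-z\tau}))v_h\,dz$.

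Next, with $\delta_0,\delta_1$ as in Proposition \ref{lem:solurep} and Lemma \ref{lem:g}, I would choose $\delta=\min(\delta_0,\delta_1,1/t_n)$ and $\theta=\pi/2+\delta$, and split $u_h(t_n)-U_h^n=\mathrm{I}+\mathrm{II}$, where $\mathrm I$ is the integral over $\Gamma^\tau_{\theta,\delta}$ of $e^{zt_n}\big(z^{-1}K_s(z)-\mu(e^{-z\tau})\beta_\tau(e^{-z\tau})^{-1}K_s(\beta_\tau(e^{-z\tau}))\big)v_h$ and $\mathrm{II}$ is the tail integral over $\Gamma_{\theta,\delta}\backslash\Gamma^\tau_{\theta,\delta}$ of $e^{zt_n}z^{-1}K_s(z)v_h$. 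For $\mathrm I$, Lemma \ref{lem:est-hom-4} bounds the operator norm of the integrand's symbol by $c\tau^2|z|$; parametrizing the arc $r\mapsto re^{\pm\mathrm i\theta}$ and the circular part, using $\cos\theta<0$ and $\delta\le 1/t_n$, gives $\|\mathrm I\|_{L^2(\Omega)}\le c\tau^2\big(\int_\delta^{\pi/(\tau\sin\theta)}e^{rt_n\cos\theta}r\,dr+\int_{-\theta}^\theta e^{\delta t_n|\cos\psi|}\delta^2\,d\psi\big)\|v_h\|_{L^2(\Omega)}\le c\tau^2(t_n^{-2}+\delta^2)\|v_h\|_{L^2(\Omega)}\le c\tau^2t_n^{-2}\|v_h\|_{L^2(\Omega)}$, exactly as in Theorem \ref{thm:v-smooth}.

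The only genuinely new point, which I expect to be the mild obstacle, is that no factor $\tau^2$ is visible in $\mathrm{II}$. Here I would exploit that on $\Gamma_{\theta,\delta}\backslash\Gamma^\tau_{\theta,\delta}$ one has $|z|=r\ge\pi/(\tau\sin\theta)$, hence $\tau r\ge\pi$ and therefore $r^{-1}\le\pi^{-2}\tau^2 r$. Together with the uniform bound $\|K_s(z)\|=\|I-z^\al(z^\al-\Delta_h)^{-1}\|\le c$ from \eqref{eqn:resol}, this yields $\|\mathrm{II}\|_{L^2(\Omega)}\le c\|v_h\|_{L^2(\Omega)}\int_{\pi/(\tau\sin\theta)}^\infty e^{rt_n\cos\theta}r^{-1}\,dr\le c\tau^2\|v_h\|_{L^2(\Omega)}\int_0^\infty e^{rt_n\cos\theta}r\,dr=c\tau^2t_n^{-2}\|v_h\|_{L^2(\Omega)}$, using the change of variables $s=rt_n$ and $\cos\theta<0$. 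Finally $\|v_h\|_{L^2(\Omega)}=\|P_hv\|_{L^2(\Omega)}\le\|v\|_{L^2(\Omega)}$ by $L^2$-stability of $P_h$, and combining the estimates on $\mathrm I$ and $\mathrm{II}$ with the triangle inequality gives the claimed bound. Everything beyond the trick $r^{-1}\le\pi^{-2}\tau^2 r$ on the truncated contour is a routine repetition of the smooth-data argument with $K_s$ in place of $K$.
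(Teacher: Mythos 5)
Your proposal is correct and follows essentially the same route as the paper: the same rewriting of both representations through $K_s(z)=(z^\al-\Delta_h)^{-1}\Delta_h$ acting on $v_h$, the same splitting into the integral over $\Gamma^\tau_{\theta,\delta}$ (bounded via Lemma \ref{lem:est-hom-4} with $\delta\le 1/t_n$) and the truncated tail, and the same mechanism $r\ge \pi/(\tau\sin\theta)\Rightarrow r^{-1}\le c\tau^2 r$ for extracting $\tau^2$ from the tail, finishing with the $L^2$-stability of $P_h$.
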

\begin{proof}
We choose $\delta=\min(\delta_0,\delta_1)$ and $\theta={\pi}/{2}+\delta$.
By Proposition \ref{lem:solurep}, we split the error into
\begin{equation*}
\begin{split}
 u_h(t_n)-U_h^n &=  \frac{1}{2\pi\mathrm{i}} \int_{\Gamma_{\theta,\delta}^\tau } e^{zt_n}
  \left(  z^{-1}K_s(z)-\mu(e^{-z\tau}) \beta_\tau(e^{-z\tau})^{-1} K_s(\beta_\tau(e^{-z\tau}))\right) v_h\,dz\\
  &\quad +\frac{1}{2\pi\mathrm{i}} \int_{\Gamma_{\theta,\delta}\backslash\Gamma_{\theta,\delta}^\tau } e^{zt_n} z^{-1}K_s(z) v_h dz
  =:\mathrm{I} + \mathrm{II}.
\end{split}
\end{equation*}
By Lemma \ref{lem:est-hom-4} and choosing $\delta \le 1/t_n$, we bound the first term $\mathrm{I}$ by
\begin{equation*}
    \| \mathrm{I} \|_{L^2(\Omega)}  \le c \tau^2 \|  v_h \|_{L^2(\Omega)}\bigg(\int_{\delta}^{\pi/(\tau\sin\theta)}  e^{r t_n\cos\theta} r dr
     + \int_{-\theta}^{\theta} e^{\delta t_n|\cos\psi|} \delta^2 d\psi \bigg) \le ct_n^{ -2}\tau^2 \| v_h\|_{L^2(\Omega)}.
\end{equation*}
For the second term $\mathrm{II}$, we appeal to the resolvent estimate \eqref{eqn:resol} and obtain
\begin{equation*}
    \| \mathrm{II} \|_{L^2(\Omega)} \le c \| v_h \|_{L^2(\Omega)} \int_{\pi/(\tau\sin\theta)}^\infty  e^{r t_n\cos\theta} r^{-1} \,dr
     \le c\tau^2 t_n^{ -2} \|  v_h \|_{L^2(\Omega)}.
\end{equation*}
Now the desired result follows directly from the $L^2(\Omega)$-stability of $P_h$.
\end{proof}

\begin{remark}\label{rem:correct}
The initial correction compensates the solution singularity at $t=0$, which is
crucial to achieve the $O(\tau^2)$ convergence. Otherwise, we can only derive an $O(\tau)$ rate
\begin{equation*}
    \| u_h(t_n)-U_h^n  \|_{L^2\II} \le c \tau t_n^{\al-1} \| \Delta v \|_{L^2\II},
\end{equation*}
even if the initial data $v$ is smooth. This was numerically verified in
Table \ref{tab:1st} in Section \ref{subsec:fully}. The key of correction is to choose a proper
function $\mu$ in \eqref{eqn:fun0}, such that the estimate $|\mu(e^{-z\tau})-1| \le c \tau^2 |z|^2$
from Lemma \ref{lem:est-hom-1} holds. The choice of $\mu$ is clearly nonunique; see Section
\ref{sec:numerics} for another choice. The correction in \eqref{eqn:fully-mod} is probably
the most practical one, since it only changes the first two steps.
\end{remark}

\begin{remark}\label{rmk:exponent}{
By the proof of Theorems \ref{thm:v-smooth} and \ref{thm:v-nonsmooth} and an interpolation argument,
we deduce that for $v\in D((-\Delta)^s)$, $s\in[0,1]$, with $v_h=P_hv$, there holds
\begin{equation*}
  \|u_h(t_n)-U_h^n\|_{L^2\II} \leq c\tau^2 t_n^{s\alpha-2}\|(-\Delta_h)^sv_h\|_{L^2\II}.
\end{equation*}}
\end{remark}

\subsection{Error analysis for the inhomogeneous problem}
Now we turn to the inhomogeneous problem $f\neq0$ and $v=0$. By Proposition
\ref{lem:solurep}, it suffices to analyze the two terms involving $F_h^0=f_h(0)$ and $\widehat{G}_h$ in the integral representation.
First, assume that $f$ is time-independent. Then by \eqref{eqn:semisol}, we have
\begin{equation*}
  u_h(t_n)-U_h^n= -\frac{1}{2\pi\mathrm{i}}\int_{\Gamma_{\theta,\delta}}e^{zt_n}K(z)F_h^0dz + \frac{1}{2\pi\mathrm{i} }\int_{\Gamma_{\theta,\delta}^\tau }e^{zt_{n}}\mu(e^{-z\tau}) K( \beta_\tau(e^{-z\tau}))F_h^0  \,dz  .
\end{equation*}
Then by Lemma \ref{lem:est-hom-3} and repeating the argument in the proof of Theorem \ref{thm:v-nonsmooth}, we deduce
\begin{equation}\label{eqn:fx}
  \|u_h(t_n)-U_h^n\|_{L^2(\Omega)} \le c \tau^{2} t_n^{\al-2} \| F_h^0  \|_{L^2(\Omega)} \le  c \tau^{2} t_n^{\al-2} \|f \|_{L^2(\Omega)}.
\end{equation}
Second, with $f(0)=0$, by the Taylor expansion of integral form
\begin{equation}\label{eqn:f}
    f_h = f_h(0) + tf_h'(0)+ t*f_h''= tf_h'(0)+ t*f_h'',
\end{equation}
it suffices to bound the errors for source terms of the form $tg_h$ and $t\ast g_h$, which is done next.
The next lemma gives an error estimate for $tg_h$.
\begin{lemma}\label{lem:inh-1}
Let $v=0$, and $u_h$ and $U_h^n$ be the solutions of \eqref{eqn:fdesemidis} with $f_h = t g_h(x) \in X_h$,
and \eqref{eqn:fully-mod} with $F_h^n= f_h(t_n)$, respectively.
Then, there holds
\begin{equation*}
   \| U_h^n - u_h(t_n)\|_{L^2(\Omega)} \le c \tau^2 t_n^{\alpha-1} \| g_h \|_{L^2(\Omega)}.
\end{equation*}
\end{lemma}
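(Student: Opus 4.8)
The plan is to follow the contour–integral argument of Theorems~\ref{thm:v-smooth} and~\ref{thm:v-nonsmooth}, the only new ingredient being a scalar estimate on the discrete ``double integration'' symbol attached to the source. Since $f_h(0)=0$, we have $F_h^0=0$ and $G_h^n=F_h^n=n\tau g_h$, so $\widetilde G_h(\xi)=\sum_{n\ge0}n\tau g_h\,\xi^n=\tau g_h\,\xi/(1-\xi)^2$ (extending the source by $t_ng_h$ for $n>N$ leaves $U_h^n$ unchanged for $n\le N$, so this is harmless); the only singularity $\xi=1$ maps, under $\xi=e^{-z\tau}$, to $z=0$, which lies outside the region enclosed by $\Gamma^\tau$, $\Gamma^\tau_{\theta,\delta}$ and $\Gamma^\tau_\pm$, and hence the representation \eqref{Rep-Wh} is valid. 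Since $\widehat f_h(z)=z^{-2}g_h$ (the Laplace transform of $t\mapsto tg_h$), it follows from \eqref{eqn:semisol}--\eqref{eqn:kernel} with $\Delta_hv_h=0$, from \eqref{Rep-Wh} with $\Delta_hv_h=0$ and $F_h^0=0$, and from the identity $-\zeta K(\zeta)=(\zeta^\al-\Delta_h)^{-1}$, that
\begin{align*}
  u_h(t_n)&=\frac{1}{2\pi\mathrm{i}}\int_{\Gamma_{\theta,\delta}}e^{zt_n}(z^\al-\Delta_h)^{-1}z^{-2}g_h\,dz,\\
  U_h^n&=\frac{1}{2\pi\mathrm{i}}\int_{\Gamma^\tau_{\theta,\delta}}e^{zt_n}(\beta_\tau(e^{-z\tau})^\al-\Delta_h)^{-1}\frac{\tau^2 e^{-z\tau}}{(1-e^{-z\tau})^2}g_h\,dz.
\end{align*}
Fixing $\delta=\min(\delta_0,\delta_1,t_n^{-1})$ and $\theta=\pi/2+\delta$, and splitting $\Gamma_{\theta,\delta}=\Gamma^\tau_{\theta,\delta}\cup(\Gamma_{\theta,\delta}\setminus\Gamma^\tau_{\theta,\delta})$, we write $U_h^n-u_h(t_n)=\mathrm I+\mathrm{II}$, with $\mathrm I$ the integral over $\Gamma^\tau_{\theta,\delta}$ of $e^{zt_n}$ times the difference of the two kernels acting on $g_h$, and $\mathrm{II}=-\frac{1}{2\pi\mathrm{i}}\int_{\Gamma_{\theta,\delta}\setminus\Gamma^\tau_{\theta,\delta}}e^{zt_n}(z^\al-\Delta_h)^{-1}z^{-2}g_h\,dz$ the high-frequency tail.

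The heart of the proof is the kernel estimate
\begin{equation*}
  \Big\|(\beta_\tau(e^{-z\tau})^\al-\Delta_h)^{-1}\frac{\tau^2 e^{-z\tau}}{(1-e^{-z\tau})^2}-(z^\al-\Delta_h)^{-1}z^{-2}\Big\|\le c\,\tau^2|z|^{-\al},\qquad z\in\Gamma^\tau_{\theta,\delta}.
\end{equation*}
By the triangle inequality the left-hand side is at most $|\tfrac{\tau^2 e^{-z\tau}}{(1-e^{-z\tau})^2}-z^{-2}|\,\|(\beta_\tau(e^{-z\tau})^\al-\Delta_h)^{-1}\|+|z|^{-2}\,\|(\beta_\tau(e^{-z\tau})^\al-\Delta_h)^{-1}-(z^\al-\Delta_h)^{-1}\|$. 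The second term is bounded by $c\tau^2|z|^{-\al}$ via \eqref{eqn:difc2}; for the first, \eqref{beta:resolv} and \eqref{abs_beta} give $\|(\beta_\tau(e^{-z\tau})^\al-\Delta_h)^{-1}\|\le c|\beta_\tau(e^{-z\tau})|^{-\al}\le c|z|^{-\al}$, so it remains to establish the scalar bound $|\tfrac{\tau^2 e^{-z\tau}}{(1-e^{-z\tau})^2}-z^{-2}|\le c\tau^2$ on $\Gamma^\tau_{\theta,\delta}$. I would prove this using $1-e^{-\zeta}=2e^{-\zeta/2}\sinh(\zeta/2)$, which yields the clean identity $\tfrac{\tau^2 e^{-z\tau}}{(1-e^{-z\tau})^2}=\tfrac{1}{z^2}\big(\tfrac{z\tau/2}{\sinh(z\tau/2)}\big)^2$; since $\big(\tfrac{u}{\sinh u}\big)^2-1$ is analytic for $|u|<\pi$ and vanishes to second order at $u=0$, while $|z\tau/2|\le\pi/(2\sin\theta)<\pi$ on $\Gamma^\tau_{\theta,\delta}$ when $\theta$ is close to $\pi/2$, we get $\big|\big(\tfrac{z\tau/2}{\sinh(z\tau/2)}\big)^2-1\big|\le c|z\tau|^2$, hence $|\tfrac{\tau^2 e^{-z\tau}}{(1-e^{-z\tau})^2}-z^{-2}|\le c\tau^2$, completing the kernel estimate.

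With the kernel estimate in hand, the two terms are treated exactly as in Theorems~\ref{thm:v-smooth}--\ref{thm:v-nonsmooth}. For $\mathrm I$, parametrize $\Gamma^\tau_{\theta,\delta}$ by the two rays $z=re^{\pm\mathrm{i}\theta}$, $\delta\le r\le\pi/(\tau\sin\theta)$, and the arc $|z|=\delta$; using $\int_\delta^\infty e^{rt_n\cos\theta}r^{-\al}\,dr=t_n^{\al-1}\int_{\delta t_n}^\infty e^{s\cos\theta}s^{-\al}\,ds\le c\,t_n^{\al-1}$ (convergent since $\al<1$ and $\cos\theta<0$) together with $\int_{-\theta}^{\theta}e^{\delta t_n|\cos\psi|}\delta^{1-\al}\,d\psi\le c\,\delta^{1-\al}\le c\,t_n^{\al-1}$ (by $\delta\le t_n^{-1}$), one gets $\|\mathrm I\|_{L^2\II}\le c\tau^2 t_n^{\al-1}\|g_h\|_{L^2\II}$. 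For $\mathrm{II}$ one has $r\ge\pi/(\tau\sin\theta)$, so $r^{-2}\le c\tau^2$, and by the resolvent bound \eqref{eqn:resol}, $\|\mathrm{II}\|_{L^2\II}\le c\|g_h\|_{L^2\II}\int_{\pi/(\tau\sin\theta)}^\infty e^{rt_n\cos\theta}r^{-2-\al}\,dr\le c\tau^2\|g_h\|_{L^2\II}\int_0^\infty e^{rt_n\cos\theta}r^{-\al}\,dr\le c\tau^2 t_n^{\al-1}\|g_h\|_{L^2\II}$. Adding the two bounds yields the assertion. I expect the scalar kernel estimate $|\tau^2 e^{-z\tau}/(1-e^{-z\tau})^2-z^{-2}|\le c\tau^2$ to be the only genuinely new point of the argument: the backward-Euler ``double integration'' symbol $(\tau/(1-e^{-z\tau}))^2$ alone is only first-order accurate, and it is the extra factor $e^{-z\tau}$---arising precisely because the sampled source $f_h(t_n)=t_ng_h$ has generating function $\tau g_h\,\xi/(1-\xi)^2$---that restores $O(\tau^2)$ accuracy, so that no starting correction is needed for a source of this form.
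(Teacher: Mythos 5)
Your argument is correct and follows essentially the same route as the paper: the same integral representations of $u_h(t_n)$ and $U_h^n$, the same kernel estimate $\|\tau^2e^{-z\tau}(1-e^{-z\tau})^{-2}(\beta_\tau(e^{-z\tau})^\alpha-\Delta_h)^{-1}-z^{-2}(z^\alpha-\Delta_h)^{-1}\|\le c\tau^2|z|^{-\alpha}$ obtained from \eqref{beta:resolv}, \eqref{abs_beta} and \eqref{eqn:difc2}, and the same splitting into the contour integral over $\Gamma^\tau_{\theta,\delta}$ and the high-frequency tail with identical contour bounds. The only (harmless) deviation is cosmetic: the paper proves the scalar symbol bound via the series estimate $|(1-e^{-z\tau})^2e^{z\tau}\tau^{-2}-z^2|\le c\tau^2|z|^4$, while you obtain the equivalent bound $|\tau^2e^{-z\tau}(1-e^{-z\tau})^{-2}-z^{-2}|\le c\tau^2$ directly through the $\sinh$ identity.
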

\begin{proof}
Like before, we choose $\delta=\min(\delta_0,\delta_1)$ and $\theta={\pi}/{2}+\delta$.
By \eqref{eqn:semisol} and Proposition \ref{lem:solurep}, the semidiscrete Galerkin solution $u_h(t_n)$
and fully discrete solution $U_h^n$ are given by
\begin{equation*}
     u_h(t_n)  = \frac{1}{2\pi\mathrm{i}}\int_{\Gamma_{\theta,\delta}} e^{zt_{n}}z^{-2}(z^\alpha-\Delta_h)^{-1} g_h \, dz ,
     \end{equation*}
  and
  \begin{equation*}
U_h^n  = \frac{1}{2\pi\mathrm{i}}\int_{\Gamma_{\theta,\delta}^\tau } e^{zt_{n}} \frac{\tau^2 e^{-z\tau} }{ (1-e^{-z\tau})^2}(\beta_\tau(e^{-z\tau})^\alpha-\Delta_h)^{-1} g_h \, dz,
\end{equation*}
respectively. Next, we claim the following estimate on the kernels in the solution representations
\begin{equation}\label{eqn:est1}
   \| \frac{\tau^2 e^{-z\tau} }{ (1-e^{-z\tau})^2}(\beta_\tau(e^{-z\tau})^\alpha-\Delta_h)^{-1} - z^{-2}(z^\al-\Delta_h)^{-1} \| \le c \tau^2 |z|^{-\alpha}\quad z\in \Gamma_{\theta,\delta}^\tau.
\end{equation}
This is a direct consequence of Lemmas \ref{lem:est-hom-2} and \ref{lem:est-hom-1}  and the following
inequality
\begin{equation*}
\begin{split}
    |(1-e^{-z\tau})^2e^{z\tau}\tau^{-2} - z^2| 
    & \le c\tau^2|z|^4\sum_{n=1}^\infty \frac{2}{(2n+2)!} \tau^{2n-2}|z|^{2n-2}\le c\tau^2|z|^4,
    \quad\forall\, z\in \Gamma_{\theta,\delta}^\tau ,
\end{split}
\end{equation*}
where the last step holds since $|z|\tau\leq c$ for $z\in \Gamma_{\theta,\delta}^\tau$.
Next, we split the error into
\begin{equation*}
\begin{split}
 u_h(t_n)-U_h^n &=  \frac{1}{2\pi\mathrm{i}} \int_{\Gamma_{\theta,\delta}^\tau } e^{zt_n}
  \left( z^{-2}(z^\al-\Delta_h)^{-1} - \frac{\tau^2 e^{-z\tau} }{ (1-e^{-z\tau})^2}(\beta_\tau(e^{-z\tau})^\alpha-\Delta_h)^{-1} \right) g_h\,dz\\
  &\quad +\frac{1}{2\pi\mathrm{i}} \int_{\Gamma_{\theta,\delta}\backslash\Gamma_{\theta,\delta}^\tau } e^{zt_n} z^{-2}(z^\alpha-\Delta_h)^{-1} g_h dz
  :=\mathrm{I} + \mathrm{II}.
\end{split}
\end{equation*}
Using the estimate \eqref{eqn:est1} and choosing $\delta\le1/t_n$, we bound the first term $\mathrm{I}$ by
\begin{equation*}
    \|\mathrm{I}\|_{L^2(\Omega)}  \le c\tau^2 \|  g_h \|_{L^2(\Omega)}\bigg(\int_{\delta}^{\pi/(\tau\sin\theta)}  e^{r t_n\cos\theta} r^{-\alpha} dr
     + \int_{-\theta}^{\theta}  e^{\delta t_n|\cos\psi|} \delta^{1-\alpha}d\psi \bigg) \le c \tau^2t_n^{ \alpha-1} \| g_h\|_{L^2(\Omega)}.
\end{equation*}
Similarly, by appealing to the resolvent estimate \eqref{eqn:resol}, we obtain
\begin{equation*}
\begin{split}
    \| \mathrm{II} \|_{L^2(\Omega)} &\le c\| g_h \|_{L^2(\Omega)} \int_{\pi/(\tau\sin\theta)}^\infty  e^{r t_n\cos\theta} r^{-2-\alpha} \,dr
     \le  c\tau^2 t_n^{ \alpha-1} \|  g_h \|_{L^2(\Omega)}.
\end{split}
\end{equation*}
Now the desired result follows from the triangle inequality.
\end{proof}

The next lemma gives an error estimate for $t\ast g_h$.
\begin{lemma}\label{lem:inh-2}
Let $v_h=0$, $u_h$ and $U_h^n$ be the solutions of \eqref{eqn:fdesemidis} with $f_h = t * g_h \in X_h$
and \eqref{eqn:fully-mod} with $F_h^n= f_h(t_n)$, respectively. Then, there holds
\begin{equation*}
   \| U_h^n - u_h(t_n)\|_{L^2(\Omega)} \le c \tau^2 \int_0^{t_n} (t_n-s)^{\alpha-1} \| g_h(s) \|_{L^2(\Omega)} \,ds.
\end{equation*}
\end{lemma}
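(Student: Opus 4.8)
The idea is to reduce the estimate to Lemma~\ref{lem:inh-1} by superposition over the time variable of $g_h$. First, since $v_h=0$ and $f_h=t\ast g_h$ vanishes at $t=0$, we have $F_h^0=0$, so $G_h^n=F_h^n$ and the correction terms in \eqref{eqn:fully-mod} disappear; thus the fully discrete scheme coincides with \eqref{eqn:CN} with $v_h=0$. On $[0,t_n]$ we may write $f_h=\int_0^{t_n}\phi_\sigma\,g_h(\sigma)\,d\sigma$, where $\phi_\sigma(t):=(t-\sigma)_+$; for $\sigma\ge t_n$ the source $\phi_\sigma g_h(\sigma)$ vanishes at $t_0,\dots,t_n$ and contributes nothing to either solution. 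Because the semidiscrete problem \eqref{eqn:fdesemidis} and the fully discrete scheme \eqref{eqn:fully-mod} (including the $\theta$-weighting of the right-hand side and the two corrected steps, with $v_h=0$) are linear in the source, $u_h(t_n)-U_h^n=\int_0^{t_n}(u_h^{[\sigma]}(t_n)-U_h^{n,[\sigma]})\,d\sigma$, with $u_h^{[\sigma]},U_h^{n,[\sigma]}$ the corresponding solutions for the source $\phi_\sigma g_h(\sigma)$. Hence it suffices to prove, for $0<\sigma<t_n$,
\begin{equation*}
 \|u_h^{[\sigma]}(t_n)-U_h^{n,[\sigma]}\|_{L^2(\Omega)}\le c\,\tau^2(t_n-\sigma)^{\al-1}\|g_h(\sigma)\|_{L^2(\Omega)},
\end{equation*}
and then integrate over $\sigma$.

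To prove this, fix $\sigma\in(0,t_n)$, set $j:=\lceil\sigma/\tau\rceil$, $\eta:=j-\sigma/\tau\in[0,1)$ (so $\sigma=t_j-\eta\tau$), $N':=n-j\ge0$, and note $t_n-\sigma=t_{N'}+\eta\tau$. An elementary summation gives $\sum_{k\ge0}\phi_\sigma(t_k)\xi^k=\tau\xi^{j}\frac{\eta+(1-\eta)\xi}{(1-\xi)^2}$, while $\widehat{\phi_\sigma}(z)=e^{-z\sigma}z^{-2}$. Inserting these into \eqref{eqn:semisol} and Proposition~\ref{lem:solurep} (with $F_h^0=0$), and using $\xi=e^{-z\tau}$, $e^{zt_n}\xi^{j}=e^{zt_{N'}}$ and $e^{zt_n}e^{-z\sigma}=e^{zt_{N'}}e^{z\eta\tau}$,
\begin{equation*}
 u_h^{[\sigma]}(t_n)=\frac{1}{2\pi\mathrm i}\int_{\Gamma_{\theta,\delta}}e^{zt_{N'}}\,e^{z\eta\tau}z^{-2}(z^\al-\Delta_h)^{-1}g_h(\sigma)\,dz ,\quad U_h^{n,[\sigma]}=\frac{1}{2\pi\mathrm i}\int_{\Gamma_{\theta,\delta}^\tau}e^{zt_{N'}}\,\tfrac{\tau^2(\eta+(1-\eta)e^{-z\tau})}{(1-e^{-z\tau})^2}(\beta_\tau(e^{-z\tau})^\al-\Delta_h)^{-1}g_h(\sigma)\,dz .
\end{equation*}
The pair $(u_h^{[\sigma]}(t_n),U_h^{n,[\sigma]})$ thus has the same structure as in the proof of Lemma~\ref{lem:inh-1}, but at time $t_{N'}$ and with the kernels modified by the $\eta$-dependent factors $e^{z\eta\tau}$ and $\eta+(1-\eta)e^{-z\tau}$ in place of $1$ and $e^{-z\tau}$ (when $\eta=0$ one recovers Lemma~\ref{lem:inh-1} exactly). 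The key new estimate, uniform in $\eta\in[0,1)$, is
\begin{equation*}
 \Bigl\|\tfrac{\tau^2(\eta+(1-\eta)e^{-z\tau})}{(1-e^{-z\tau})^2}(\beta_\tau(e^{-z\tau})^\al-\Delta_h)^{-1}-e^{z\eta\tau}z^{-2}(z^\al-\Delta_h)^{-1}\Bigr\|\le c\,\tau^2|z|^{-\al},\qquad z\in\Gamma_{\theta,\delta}^\tau ,
\end{equation*}
which generalizes \eqref{eqn:est1}; it follows by splitting the left-hand side into (difference of the scalar multipliers)$\cdot(\beta_\tau(e^{-z\tau})^\al-\Delta_h)^{-1}$ plus $e^{z\eta\tau}z^{-2}[(\beta_\tau(e^{-z\tau})^\al-\Delta_h)^{-1}-(z^\al-\Delta_h)^{-1}]$ and invoking \eqref{eqn:difc2}, \eqref{beta:resolv}, \eqref{abs_beta} and the scalar bound $\bigl|\tfrac{\tau^2(\eta+(1-\eta)e^{-z\tau})}{(1-e^{-z\tau})^2}-e^{z\eta\tau}z^{-2}\bigr|\le c\tau^2$, the last of which comes from Lemma~\ref{lem:g} and a Taylor expansion in $z\tau$ with constants independent of $\eta$ (since $\eta,1-\eta\in[0,1]$).

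Given this, decompose $u_h^{[\sigma]}(t_n)-U_h^{n,[\sigma]}=\mathrm I+\mathrm{II}$ exactly as in the proof of Lemma~\ref{lem:inh-1}: $\mathrm I$ is the integral over $\Gamma_{\theta,\delta}^\tau$ of $e^{zt_{N'}}$ times the kernel difference applied to $g_h(\sigma)$, and $\mathrm{II}$ is the integral over $\Gamma_{\theta,\delta}\setminus\Gamma_{\theta,\delta}^\tau$ of $e^{zt_{N'}}e^{z\eta\tau}z^{-2}(z^\al-\Delta_h)^{-1}g_h(\sigma)$. In $\mathrm{II}$ the exponential factor is $e^{z(t_n-\sigma)}$ (as $t_{N'}+\eta\tau=t_n-\sigma$), which furnishes the decay $e^{r(t_n-\sigma)\cos\theta}$ along the rays of $\Gamma_{\theta,\delta}$, and the standard estimate of $\mathrm{II}$ gives $\|\mathrm{II}\|_{L^2(\Omega)}\le c\tau^2(t_n-\sigma)^{\al-1}\|g_h(\sigma)\|_{L^2(\Omega)}$. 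For $\mathrm I$: if $N'\ge1$, choosing $\delta=\min(\delta_0,\delta_1,1/t_{N'})$ and proceeding as in Lemma~\ref{lem:inh-1} yields $\|\mathrm I\|_{L^2(\Omega)}\le c\tau^2 t_{N'}^{\al-1}\|g_h(\sigma)\|_{L^2(\Omega)}$, and since $t_{N'}=(t_n-\sigma)-\eta\tau\ge\tfrac12(t_n-\sigma)$ this is $\le c\tau^2(t_n-\sigma)^{\al-1}\|g_h(\sigma)\|_{L^2(\Omega)}$; if $N'=0$ (so $t_n-\sigma=\eta\tau\le\tau$), keep $\delta=\min(\delta_0,\delta_1)$ and use the truncation $|z|\le\pi/(\tau\sin\theta)$ on $\Gamma_{\theta,\delta}^\tau$ to get $\|\mathrm I\|_{L^2(\Omega)}\le c\tau^2\tau^{\al-1}\|g_h(\sigma)\|_{L^2(\Omega)}\le c\tau^2(t_n-\sigma)^{\al-1}\|g_h(\sigma)\|_{L^2(\Omega)}$. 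Integrating the resulting slicewise bound over $\sigma\in(0,t_n)$ completes the proof. I expect the only genuinely new work to be the uniform kernel estimate above and the bookkeeping caused by the non-grid-aligned shift $\sigma$ — the offset $\eta$, the reduced time $t_{N'}$, and the degenerate case $N'=0$ — none of which goes essentially beyond the techniques already used for Lemma~\ref{lem:inh-1}.
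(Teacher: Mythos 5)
Your proof is correct, but it is organized quite differently from the paper's. The paper keeps the convolution structure intact: it introduces the operators $\mathcal{E}(t)$ and the discrete measure $\mathcal{E}_\tau(t)=\sum_n\mathcal{E}_\tau^n\delta_{t_n}$, writes $u_h(t_n)=((\mathcal{E}*t)*g_h)(t_n)$ and $U_h^n=((\mathcal{E}_\tau*t)*g_h)(t_n)$, proves the grid-point bound $\|((\mathcal{E}-\mathcal{E}_\tau)*t)(t_n)\|\le c\tau^2t_n^{\alpha-1}$ by the argument of Lemma~\ref{lem:inh-1}, and then extends it to off-grid times $t\in(t_{n-1},t_n)$ by Taylor expansion at $t_n$, which requires the auxiliary estimates $\|((\mathcal{E}-\mathcal{E}_\tau)*1)(t_n)\|\le c\tau t_n^{\alpha-1}$, $\|\mathcal{E}(t)\|\le ct^{\alpha-1}$ and $\|\mathcal{E}_\tau^n\|\le c\tau t_n^{\alpha-1}$; convolving the resulting pointwise kernel bound with $\|g_h\|$ gives the lemma. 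You instead superpose over shifted ramps $(t-\sigma)_+g_h(\sigma)$ and estimate each slice directly by contour integration; your slicewise quantity is in fact exactly $((\mathcal{E}-\mathcal{E}_\tau)*t)(t_n-\sigma)\,g_h(\sigma)$, so the two arguments establish the same underlying off-grid kernel estimate by different means. What your route buys is directness: no Taylor trick and none of the auxiliary operator bounds, only a generalization of \eqref{eqn:est1} with the offset-dependent multipliers $e^{z\eta\tau}$ and $\eta+(1-\eta)e^{-z\tau}$, proved uniformly in $\eta\in[0,1)$ from \eqref{eqn:difc2}, \eqref{beta:resolv}, \eqref{abs_beta} and a Taylor expansion in $z\tau$ (your expansions and the generating function $\tau\xi^{j}\frac{\eta+(1-\eta)\xi}{(1-\xi)^2}$ check out, and your use of Proposition~\ref{lem:solurep} with the unmodified ramp data is legitimate for the same reason it is in Lemma~\ref{lem:inh-1}, since the only singularity $\xi=1$, i.e.\ $z=0$, lies outside the deformation region). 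The price is the bookkeeping you already identified: the per-slice choice of $\delta\le 1/t_{N'}$, the comparison $t_{N'}\ge\tfrac12(t_n-\sigma)$, and the degenerate slice $N'=0$, all of which you handle correctly; the paper's approach localizes that bookkeeping into a single Taylor-remainder computation instead.
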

\begin{proof}
We choose $\delta=\min(\delta_0,\delta_1)$ and $\theta={\pi}/{2}+\delta$ like before.
First, we introduce the operator $\mathcal{E}(t)$ defined by
$\mathcal{E}  (t)   =  \frac{1}{2\pi\mathrm{i}}\int_{\Gamma_{\theta,\delta}}
 e^{z t } (z^\alpha-\Delta_h)^{-1}\, dz$.
Then, the semidiscrete Galerkin solution $u_h(t_n)$ can be represented by
\begin{equation*}
  u_h(t_n) = (\mathcal{E}*f_h)(t_n)=(\mathcal{E}*(t*g_h))(t_n)=((\mathcal{E}*t)*g_h)(t_n).
\end{equation*}
Next we derive the representation of the fully discrete solution $U_h^n$.
Using the generating function $ \widetilde f_h(\xi)=\sum_{n=0}^\infty f_h(t_n)\xi^n$,
and $ \widetilde U_h(\xi) =  ({\beta_\tau(\xi)^\alpha}-\Delta_h)^{-1} \widetilde f_h(\xi)
    =: \widetilde {\mathcal{E}} (\beta_\tau(\xi)) \widetilde f_h(\xi),
$
we represent $U_h^n$ by
\begin{equation*}
  U_h^n = \sum_{j=0}^n \mathcal{E}_\tau^{n-j}f_h(t_j) \qquad\text{with}\qquad
  \widetilde {\mathcal{E}}(\beta_\tau(\xi)) = \sum_{n=0}^\infty \mathcal{E}_\tau^n \xi^n.
\end{equation*}
Simple computation yields the following integral representation
\begin{equation*}
    \mathcal{E}_\tau^n = \frac{\tau}{2\pi\mathrm{i}}\int_{\Gamma_{\theta,\delta}^\tau } e^{zn\tau} ({ \beta_\tau(e^{-z\tau}) ^\alpha}-\Delta_h)^{-1}\,dz.
\end{equation*}
Using Lemma \ref{lem:est-hom-1}, we have the following estimate
\begin{equation}\label{eqn:Etaun}
    \|   \mathcal{E}_\tau^n  \| \le c\tau t_{n}^{\al-1}.
\end{equation}
Let $\mathcal{E}_\tau(t) = \sum_{ n=0}^\infty\mathcal{E}_\tau^{n}\delta_{t_n}(t) $,
with $\delta_{t_n}$ being the Dirac-delta function at $t_n$ (from the left side). Then we have
\begin{equation*}
  U_h^n = (\mathcal{E}_\tau*f_h)(t_n)=(\mathcal{E}_\tau*(t*g_h))(t_n)
  =((\mathcal{E}_\tau*t)*g_h)(t_n).
\end{equation*}
By the discrete convolution rule, we have
\begin{equation*}
 \widetilde{(\mathcal{E}_\tau*t)}(\xi) =  \sum_{n=0}^\infty \sum_{j=0}^{n} \mathcal{E}_\tau^{n-j} t_j \xi^n
 =  \big( \sum_{j=0}^\infty  \mathcal{E}_\tau^{j}\xi^j\big) \big( \sum_{j=0}^\infty  t_j \xi^j \big)
  = \widetilde {\mathcal{E}} (\beta_\tau(\xi))\frac{\tau \xi}{(1-\xi)^2},
\end{equation*}
and consequently, by repeating the argument in the proof of Lemma \ref{lem:inh-1}, we deduce
\begin{equation*}
 \| ((\mathcal{E}_\tau-\mathcal{E})*t)(t_n)\|  \le c\tau^2 t_n^{\alpha-1}.
\end{equation*}
Next, we derive that for $t>0$
\begin{equation}\label{eqn:tn}
 \| ((\mathcal{E}_\tau-\mathcal{E})*t)(t)  \| \le c\tau^2 t^{\alpha-1},\quad \forall t\in(t_{n-1},t_n).
\end{equation}
To see the claim, we recall the Taylor expansion of $\mathcal{E}(t)$ at $t=t_n$
\begin{equation*}
    (\mathcal{E}*t)(t) = (\mathcal{E}*t)(t_n) + (t-t_n) (\mathcal{E}*1)(t_n)
    + \int_{t_n}^t (t-s)\mathcal{E}(s)\,ds.
\end{equation*}
This expansion holds also for $(\mathcal{E}_\tau*t)(t) $. Then the preceding argument yields
\begin{equation*}
  \|  ((\mathcal{E}-\mathcal{E}_\tau)*t)(t_n)\| \le c\tau^2t_n^{\al-1}
   \quad \text{and}\quad \|(\mathcal{E}-\mathcal{E}_\tau)*1)(t_n)|| \le c\tau t_n^{\al-1}.
\end{equation*}
Meanwhile, by the resolvent estimate \eqref{eqn:resol}, we have $\|\mathcal{E}(t)\|\leq ct^{\alpha-1}$,
and consequently, there holds
\begin{equation*}
    \bigg\| \int_{t_n}^t (t-s)\mathcal{E}(s)\,ds \bigg\|
    \le c \int_t^{t_n}(s-t) s^{\al-1} \,ds \le c\tau^2 t^{\alpha-1} .
\end{equation*}
Similarly, appealing to \eqref{eqn:Etaun}, we deduce
\begin{equation*}
    \bigg\| \int_{t_n}^t (t-s)\mathcal{E}_\tau(s)\,ds\bigg\|
   \le  \tau \| \mathcal{E}_\tau^n \| \le c \tau^2 t_n^{\alpha-1}.
\end{equation*}
Then \eqref{eqn:tn} follows directly by $t_n^{\alpha-1}\le t^{\alpha-1}$ for $t\in(t_{n-1},t_n)$
and $\al\in(0,1)$, concluding the proof.
\end{proof}

By \eqref{eqn:fx} and Lemmas \ref{lem:inh-1} and \ref{lem:inh-2}, we obtain
the error estimate for the inhomogeneous problem.
\begin{theorem}\label{thm:inhomog-fully}
Let $v=0$, $f\in  W^{1, \infty} (0,T;
L^2(\Omega))$, $\int_0^t (t-s)^{\alpha-1}  \| f''(s)  \|_{L^2(\Omega)}ds\in L^\infty(0,T)$,
and $u_h$ and $U_h^n$ be the solutions of \eqref{eqn:fdesemidis} with $f_h=P_hf$
and \eqref{eqn:fully-mod} with $F_h^n=P_hf(t_n)$, respectively. Then, there holds
\begin{equation*}
\begin{split}
\| u_h(t_n)-U_h^n &\|_{L^2(\Omega)}\le c\tau^2\bigg(
t_n^{\alpha-2}\|  f(0) \|_{L^2(\Omega)}+t_n^{\alpha-1}\|  f'(0) \|_{L^2(\Omega)}+
\int_0^{t_n} (t_n-s)^{\alpha-1}\| f''(s)\|_{L^2(\Omega)} \,ds \bigg).
\end{split}
\end{equation*}
\end{theorem}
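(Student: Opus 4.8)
The plan is to reduce the estimate to the three special cases already treated, namely the time-independent source \eqref{eqn:fx}, the source of the form $t g_h$ (Lemma \ref{lem:inh-1}), and the source of the form $t\ast g_h$ (Lemma \ref{lem:inh-2}). Since $v=0$, the semidiscrete scheme \eqref{eqn:fdesemidis}, the corrected scheme \eqref{eqn:fully-mod}, and hence the representations \eqref{eqn:semisol} and \eqref{Rep-Wh}, are all linear in the data $f$. Using the Taylor expansion with integral remainder \eqref{eqn:f}, I write
\begin{equation*}
  f_h(t) = f_h(0) + t\,f_h'(0) + (t\ast f_h'')(t),
\end{equation*}
where $f_h(0)=P_hf(0)$, $f_h'(0)=P_hf'(0)$, and $f_h''(s)=P_hf''(s)$; the stated hypotheses on $f$ guarantee that $f(0),f'(0)\in L^2(\Omega)$, that this identity holds pointwise in $t$, and that $F_h^n=P_hf(t_n)$ is exactly the restriction of its right-hand side to the grid. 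Splitting $u_h(t_n)-U_h^n$ into the three errors associated with the three source components and applying the triangle inequality reduces the proof to estimating each piece.

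Each piece is covered by an existing result combined with the $L^2(\Omega)$-stability of $P_h$. The contribution of the time-independent part $f_h(0)$ is bounded by \eqref{eqn:fx} applied to the constant source $f(0)$, giving $c\tau^2t_n^{\alpha-2}\|f(0)\|_{L^2(\Omega)}$. The part $t\,f_h'(0)$ has the form $t g_h$ with the time-independent function $g_h=P_hf'(0)\in X_h$, so Lemma \ref{lem:inh-1} yields $c\tau^2t_n^{\alpha-1}\|f'(0)\|_{L^2(\Omega)}$. The part $t\ast f_h''$ has the form $t\ast g_h$ with $g_h(s)=P_hf''(s)$, so Lemma \ref{lem:inh-2} yields $c\tau^2\int_0^{t_n}(t_n-s)^{\alpha-1}\|f''(s)\|_{L^2(\Omega)}\,ds$, which is finite by the assumption $\int_0^t(t-s)^{\alpha-1}\|f''(s)\|_{L^2(\Omega)}\,ds\in L^\infty(0,T)$. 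Summing the three bounds gives the claimed estimate.

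Since the analytically hard estimates are already in place, there is no serious new obstacle; the only point needing care is the bookkeeping that the Taylor decomposition is consistent with the structure of the corrected scheme, which treats $F_h^0$ (entering the extra terms at the first two steps in \eqref{eqn:fully-mod}) differently from $G_h^n=F_h^n-F_h^0$. This is harmless: the first component carries all of $F_h^0$ and has $G_h^n\equiv0$, so \eqref{eqn:fx}---which is precisely the time-independent case and incorporates the initial corrections---applies verbatim to it; the remaining two components vanish at $t=0$, so for them $F_h^0=0$ and the representation \eqref{Rep-Wh} collapses to the $\widetilde G_h$ term analyzed in Lemmas \ref{lem:inh-1} and \ref{lem:inh-2}. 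As \eqref{eqn:semisol} and \eqref{Rep-Wh} depend linearly on the pair $(F_h^0,\widetilde G_h)$, the superposition is legitimate, and the theorem follows by assembling the three bounds.
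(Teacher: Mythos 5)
Your proposal is correct and follows essentially the same route as the paper: the paper likewise decomposes $f_h$ via the Taylor expansion \eqref{eqn:f} into a constant part, a $t g_h$ part, and a $t\ast g_h$ part, and then invokes \eqref{eqn:fx}, Lemma \ref{lem:inh-1}, and Lemma \ref{lem:inh-2} together with linearity and the $L^2(\Omega)$-stability of $P_h$. Your extra remark on the consistency of the splitting with the treatment of $F_h^0$ versus $G_h^n$ in \eqref{eqn:fully-mod} is a valid clarification of a point the paper leaves implicit.
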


\begin{remark}
The estimate in Theorem \ref{thm:inhomog-fully} agrees with
the regularity theory for \eqref{eqn:fde}. In case $v=0$, following the splitting
\eqref{eqn:f}, one can show that the solution $u$ of problem \eqref{eqn:fde} satisfies
\begin{equation*}
\begin{split}
  \|  \partial_t^2 u(t)  \|_{L^2\II} 
  &\le c \bigg( t^{\al-2} \| f(0)  \|_{L^2\II} + t^{\al-1} \| f'(0)  \|_{L^2\II} + \int_0^t (t-s)^{\al-1}\| f''(s)  \|_{L^2(\Omega)} \,ds\bigg).
\end{split}
\end{equation*}
Hence, in order to have an $O(\tau^2)$ rate, we require $f(0)$, $f^\prime(0)\in L^2(\Omega)$
and a certain integrability of $f^{\prime\prime}(t)$. Otherwise,  the
scheme \eqref{eqn:fully-mod} might lose its second-order accuracy.
\end{remark}

\section{Numerical experiments and discussions}\label{sec:numerics}

Now we present examples on the unit square $\Omega=(0,1)^2$ to illustrate the scheme \eqref{eqn:fully-mod}.
In the computations, we divide the unit interval $(0,1)$ into $M$ equally
spaced subintervals, with a mesh size $h=1/M$, which partitions the domain $\Omega$ into $M^2$ small
squares. Then we get a symmetric mesh by connecting the diagonal of each small square.
We fix the time step size $\tau$ at $\tau=t/N$, where $t$ is the time of interest.
To examine the temporal convergence rates, we always fix the mesh size $h$ at $h=1/500$, and employ a time step
size $\tau=t/1000$ to compute the reference solution $u_h(t)$. Throughout, we measure the error $e^n= u_h(t_n)-U_h^n $ by
the normalized $L^2(\Omega)$ error $\| e^n\|_{L^2\II}/\| v \|_{L^2\II}$. Since the spatial discretization
error has been examined in \cite{JinLazarovZhou:SIAM2013,JinLazarovPasciakZhou:IMA2014}, we shall
focus on the temporal error below.

We consider the following four examples to illustrate the convergence analysis.
\begin{itemize}
  \item[$\mathrm{(a)}$]  $v=xy(1-x)(1-y)\in D(\Delta)$ and $f=0$;
  \item[$\mathrm{(b)}$] $v=\chi_{(0,1/2]\times(0,1)}(x,y) \in D((-\Delta)^{1/4-\epsilon})$ with $\epsilon\in(0,1/4)$ and $f=0$;
  \item[$\mathrm{(c)}$] $v=0$, and $f= (1+t^{1.5}) \chi_{(0,1/2]\times(0,1)}(x,y)$;
  \item[$\mathrm{(d)}$] $v=0$, and $f= t^\beta \chi_{(0,1/2]\times(0,1)}(x,y)$ with $\beta\in(0,1)$;
\end{itemize}

First we examine the convergence for the homogenous problem. The numerical results for cases (a) and (b) at
the time $t=1$ are given in Tables \ref{tab:a-error} and \ref{tab:b-error}, where \texttt{rate}
in the last column refers to the empirical convergence rate, and the numbers in the bracket denote theoretical
predictions from Section \ref{sec:conv}. It is observed that the corrected scheme
\eqref{eqn:fully-mod} exhibits a steady $O(\tau^2)$ convergence for both
smooth and nonsmooth data, which shows clearly its robustness. In Tables \ref{tab:a-error} and
\ref{tab:b-error}, we also include the numerical results by CQ generated by the second-order
BDF (SBD) and L1-2 scheme. In theory, SBD is $O(\tau^2)$ accurate for both smooth and nonsmooth problem
data \cite{JinLazarovZhou:SISC2016}, but a complete convergence analysis of the L1-2 scheme is still to be
developed, with a local truncation error $O(\tau^{3-\al})$  \cite{GaoSunZhang:2014}.
The L1-2 scheme exhibits only an $O(\tau)$ convergence, due to the insufficient solution regularity even
for smooth problem data, which contrasts sharply with the scheme \eqref{eqn:fully-mod} and SBD. Numerically,
with the same time step size $\tau$, the scheme \eqref{eqn:fully-mod} is slightly more accurate than SBD.

\begin{table}[htb!]
\caption{The $L^2$-error for Example (a) at $t=1$ with $h=1/500$.
}\label{tab:a-error}
\centering
     \begin{tabular}{|c|c|ccccc|c|}
     \hline
      Scheme &$\alpha\backslash N$  &$10$ &$20$ &$40$ & $80$ & $160$ &rate \\
     \hline
     &$0.2$   &4.87e-5 &1.14e-5   &2.76e-6 &6.73e-7 &1.62e-7 &$\approx$ 2.06 (2.00)\\
     CN &$0.5$   &1.19e-4 &2.78e-5 &6.70e-6 &1.63e-6 &3.93e-7 &$\approx$ 2.06 (2.00)\\
     &$0.8$   &1.22e-4 &2.68e-5 &6.46e-6 &1.57e-6 &3.78e-7  &$\approx$ 2.05 (2.00)\\
           \hline
         &$0.2$   &7.78e-5 &1.82e-5   &4.38e-6 &1.07e-6 &2.64e-7 &$\approx$ 2.02 (2.00)\\
     SBD   &$0.5$   &1.44e-4 &3.34e-5 &8.03e-6 &1.96e-6 &4.82e-7 &$\approx$ 2.05 (2.00)\\
         &$0.8$   &1.67e-4 &3.88e-5 &9.26e-6 &2.26e-6 &5.52e-7  &$\approx$ 2.05 (2.00)\\
           \hline
     &$0.2$   &5.02e-4 &2.49e-4   &1.23e-4 &6.00e-5 &2.87e-5 &$\approx$ 1.03 (2.80)\\
     L1-2 &$0.5$   &1.57e-3 &7.72e-4 &3.73e-4 &1.78e-4 &8.29e-5 &$\approx$ 1.06 (2.50)\\
     &$0.8$   &1.63e-3 &9.42e-4 &4.67e-4 &2.23e-4 &1.04e-4  &$\approx$ 1.00 (2.20)\\
      \hline
     \end{tabular}
\end{table}

\begin{table}[htb!]
\caption{The $L^2$-error for Example (b)
for $\alpha=0.5$ with $h=1/500$.}
\label{tab:b-error}
\centering
     \begin{tabular}{|c|c|ccccc|c|}
     \hline
     scheme &$t\backslash N$  &$10$ &$20$ &$40$ & $80$ & $160$ &rate \\
     \hline
     &$1$   &7.48e-5 &1.74e-5   &4.18e-6 &1.02e-6 &2.45e-7 &$\approx$ 2.05 (2.00)\\
     CN &$0.01$   &4.74e-4 &1.10e-4 &2.66e-5 &6.51e-6 &1.56e-6 &$\approx$ 2.05 (2.00)\\
     &$0.001$   &5.45e-4 &1.28e-4 &3.09e-5 &7.55e-6 &1.82e-6  &$\approx$ 2.05 (2.00)\\
      \hline
           &$1$   &8.98e-5 &2.08e-5   &5.00e-6 &1.22e-6 &3.00e-7 &$\approx$ 2.05 (2.00)\\
     SBD &$0.01$   &5.23e-4 &1.22e-4 &2.94e-5 &7.22e-6 &1.77e-6 &$\approx$ 2.04 (2.00)\\
     &$0.001$   &5.46e-4 &1.29e-4 &3.10e-5 &7.63e-6 &1.88e-6  &$\approx$ 2.04 (2.00)\\
      \hline
                                &$1$   &3.52e-4 &1.73e-4   &8.43e-5 &4.04e-5 &1.90e-5 &$\approx$ 1.05 (2.50)\\
     L1-2 &$0.01$   &2.29e-3 &1.14e-3 &5.53e-4 &2.66e-4 &1.25e-4 &$\approx$ 1.05 (2.50)\\
                                &$0.001$   &3.32e-3 &1.65e-3 &8.11e-4 &3.93e-4 &1.97e-4  &$\approx$ 1.04 (2.50)\\
      \hline
     \end{tabular}
\end{table}

\begin{figure}[hbt!]
\centering
\subfigure[case (a): smooth data]{
\includegraphics[trim = .1cm .1cm .1cm .1cm, clip=true,width=0.47\textwidth]{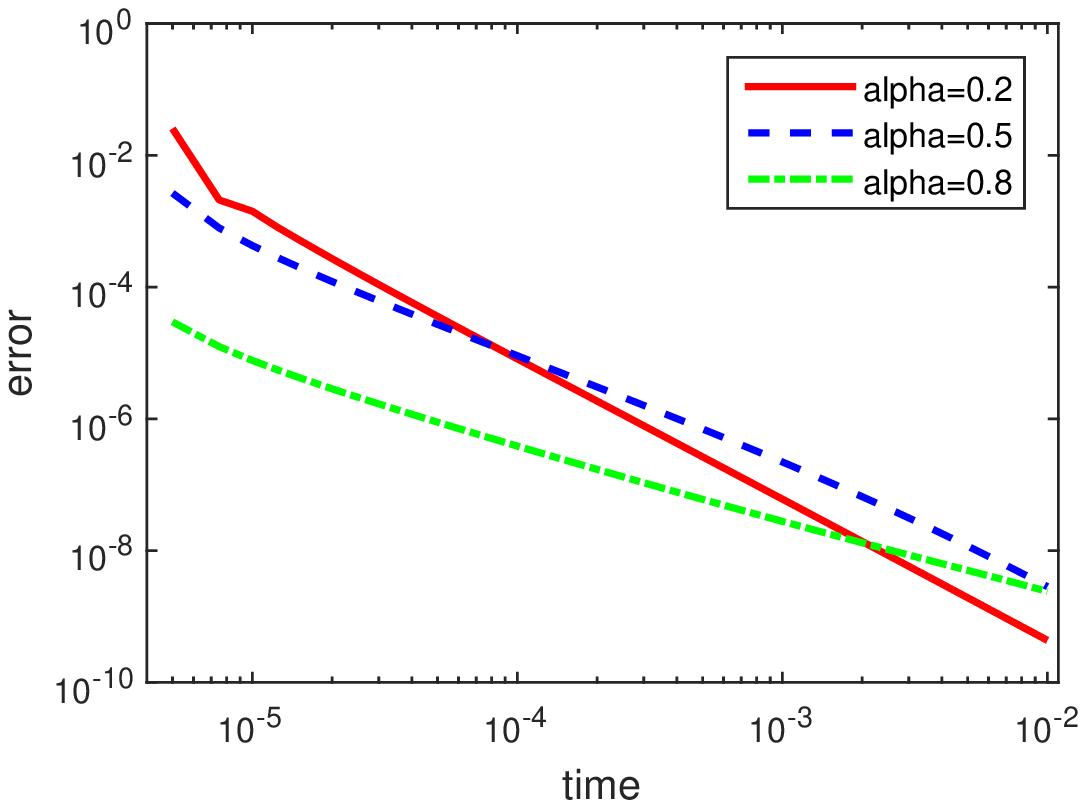}}
\subfigure[case (b): nonsmooth data]{
\includegraphics[trim = .1cm .1cm .1cm .1cm, clip=true,width=0.47\textwidth]{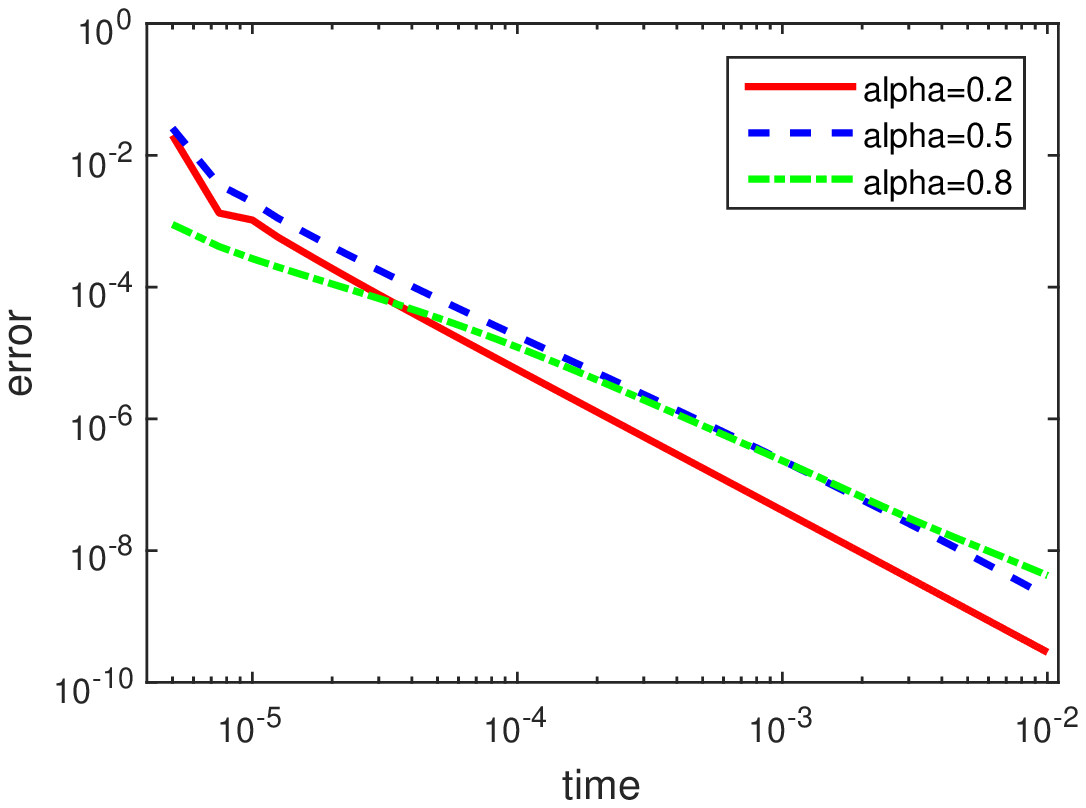}}
 \caption{The $L^2$-error versus the time $t$ with $\alpha=0.2$, $0.5$ and $0.8$, $\tau=2\times10^{-6}$, for cases (a) and (b). }\label{fig:R}
\end{figure}

\begin{table}[htb!]
\caption{The $L^2$-error for Examples (a) and (b) for $\alpha=0.5$
as $t\rightarrow0$ with $h=1/500$ and $N=10$.}
\label{tab:sing-ab}
\centering
     \begin{tabular}{|c|cccccc|c|}
     \hline
      $t$ & 1e-3 &1e-4 & 1e-5 & 1e-6 & 1e-7 & 1e-8 &rate \\
     \hline
       (a)&5.24e-4 &2.07e-4 & 6.90e-5  &2.22e-5 &7.06e-6 &2.24e-6 & 0.49 (0.50) \\
       (b) &5.43e-4 &4.06e-4 &2.97e-4 &2.23e-4 &1.67e-4 &1.24e-4& 0.13 (0.13) \\
      \hline
     \end{tabular}
\end{table}

Due to the insufficient regularity in time for problem \eqref{eqn:fde}, the temporal error
deteriorates as the time $t_n\to0$ irrespective of the data regularity; {see
Fig. \ref{fig:R} for the evolution of the $L^2$ errors with time for cases (a) and (b). For
both smooth and nonsmooth initial data, the error increases as $t$ tends to $t=0$, and
the rate is larger for smaller $\alpha$, concurring with the analysis in Section \ref{ssec:homo}.}
Next we verify the sharpness of the prefactor in the error estimates for small $t_n$. {By
Remark \ref{rmk:exponent}}, the $L^2(\Omega)$ error decays at a rate like $O(t_n^{\alpha})$
and $O(t_n^{\al/4-\al\epsilon})$ for $v\in D(\Delta)$ and $v\in D
((-\Delta)^{1/4-\epsilon})$, for any $\epsilon\in(0,1/4)$, respectively.
Hence, for a fixed $N$ and with $\alpha=1/2$, the error should behave like $O(t_n^{1/2})$ and $O(t_n^{1/8})$
for cases (a) and (b), respectively, which are fully confirmed by Table \ref{tab:sing-ab},
verifying the sharpness of the error analysis.

Next we examine the scheme \eqref{eqn:fully-mod} for inhomogeneous problems.
In case (c), $ f\in L^\infty(0,T;L^2\II)$ and $ \int_0^{t} (t-s)^{\al-1} \| f''(s)  \|_{L^2\II}\,ds
\in L^\infty(0,T)$ for any $\alpha\in(0,1)$. Theorem \ref{thm:inhomog-fully} predicts an $O(\tau^2)$ convergence,
which is fully confirmed by the results in Table \ref{tab:c-error}. The preceding observations
on the SBD and L1-2 scheme remain valid for the inhomogeneous problem: the SBD
is $O(\tau^2)$ accurate, but the L1-2 scheme can only achieve an $O(\tau)$ rate. The purpose
of case (d) is to explore the limit of the scheme \eqref{eqn:fully-mod}: In case
(d), for small exponent $\beta>0$, the source term $f$ is not smooth enough to apply
Theorem \ref{thm:inhomog-fully}, and the $O(\tau^2)$ convergence does not
hold. To see the possible convergence rate, consider the fractional
ODE $\partial_t^\alpha u(t)=t^\beta$ with  $u(0)=0$, whose solution
is given by $u(t)=\frac{\Gamma(\alpha+1)}{\Gamma(\alpha+\beta+1)}t^{\alpha+\beta}$.
The temporal regularity of the solution lies in $H^{\alpha+\beta+1/2-\epsilon}(0,T)$, from which one can
expect at best a rate $O(\tau^{\min(\alpha+\beta+1/2,2)})$. The empirical rate is of
order $O(\tau^{\min(1+\beta,2)})$ for the case $\alpha=1/2$, cf. Table \ref{tab:d-error}, which agrees well with the
expected solution regularity, thereby further verifying the robustness of the scheme \eqref{eqn:fully-mod}.

\begin{table}[htb!]
\caption{The $L^2$-error for Example (c) at $t=1$ with $h=1/500$.}
\label{tab:c-error}
\centering
     \begin{tabular}{|c|c|ccccc|c|}
     \hline
     Scheme &$\alpha\backslash N$  &$10$ &$20$ &$40$ & $80$ & $160$ &rate \\
     \hline
     &$0.2$   &1.66e-6 &3.93e-7 &9.55e-8 &2.34e-8 &5.64e-9 &$\approx$ 2.05 (2.00)\\
     CN &$0.5$   &3.52e-6 &8.23e-7 &1.99e-7 &4.86e-8 &1.17e-8 &$\approx$ 2.05 (2.00)\\
     &$0.8$   &4.49e-6 &6.80e-7 &1.63e-7 &3.97e-8 &9.54e-9 &$\approx$ 2.06 (2.00)\\
     \hline
     &$0.2$   &1.97e-6 &4.65e-7 &1.13e-7 &2.78e-8 &6.83e-9 &$\approx$ 2.04 (2.00)\\
     SBD &$0.5$   &4.32e-6 &1.00e-6 &2.42e-7 &5.91e-8 &1.45e-8 &$\approx$ 2.05 (2.00)\\
     &$0.8$   &3.49e-6 &8.03e-7 &1.90e-7 &4.61e-8 &1.13e-8 &$\approx$ 2.07 (2.00)\\
     \hline
     &$0.2$   &9.79e-6 &4.85e-6 &2.39e-6 &1.16e-6 &5.49e-7 &$\approx$ 1.04 (2.80)\\
     L1-2  &$0.5$   &1.67e-5 &8.16e-6 &3.96e-6 &1.90e-6 &8.87e-7 &$\approx$ 1.06 (2.50)\\
     &$0.8$    &1.05e-5 &4.70e-6 &2.06e-6 &8.91e-7 &3.84e-7&$\approx$ 1.19 (2.20)\\
     \hline
     \end{tabular}
\end{table}

\begin{table}[htb!]
\caption{The $L^2$-error for Example (d) at $t=1$ with $h=1/500$ and $\alpha=0.5$.
}\label{tab:d-error}
\centering
     \begin{tabular}{|c|ccccc|c|}
     \hline
      $\beta\backslash N$  &$10$ &$20$ &$40$ & $80$ & $160$ &rate \\
     \hline
     $0.2$   &9.12e-6 &3.98e-6 &1.73e-6 &7.44e-7 &3.15e-7 &$\approx$ 1.21 ($--$)\\
     $0.5$   &2.83e-6 &9.90e-7 &3.47e-7 &1.22e-7 &4.22e-8 &$\approx$ 1.52 ($--$)\\
     $0.8$   &1.07e-6 &2.97e-7 &8.28e-8 &2.31e-8 &6.43e-9 &$\approx$ 1.85 ($--$)\\
      \hline
     \end{tabular}
\end{table}

Last, we revisit the correction at starting steps. As indicated in Remark \ref{rem:correct}, there are many
possible corrections to the scheme \eqref{eqn:CN} in order to restore the
$O(\tau^2)$ accuracy. According to the convergence analysis in Section \ref{sec:conv}, the only requirement on
the correction is to choose an auxiliary function $\mu(\xi)$ in \eqref{Rep-Wh} such that
the estimate on $\mu$ in Lemma \ref{lem:est-hom-1} holds
and the resulting scheme only changes the first few steps (and thus easy to implement).
For example, the following choice satisfies the estimate
\begin{equation}\label{eqn:mu0}
    \mu_0(\xi)= (4\xi-3\xi^2+\xi^3)/[{2(1-\tfrac\al2+\tfrac\al2\xi)^{1/\al}}].
\end{equation}
Then the corresponding fully discrete scheme modifies the first three steps:
\begin{equation*}
\begin{split}
   \bPtau^\alpha (U_h-v_h)^1 - (1-\tfrac\alpha2)\Delta_h U_h^1 -  (1-\tfrac\alpha2) \Delta_h v_h&=  (1-\tfrac\alpha2) (F_h^1+ F_h^0),\\
  \bPtau^\alpha (U_h-v_h)^2  - (1-\tfrac\alpha2)\Delta_hU_h^2 - \tfrac\alpha2 \Delta_hU_h^1  -(\tfrac{3\alpha}4-\tfrac12)\Delta_h v_h
    &= (1-\tfrac\alpha2)F_h^2+\tfrac\alpha2F_h^1 + (\tfrac{3\alpha}4-\tfrac12) F_h^0,\\
  \bPtau^\alpha (U_h-v_h)^3  - (1-\tfrac\alpha2)\Delta_hU_h^3 - \tfrac\alpha2 \Delta_hU_h^2  + \tfrac\al4\Delta_h v_h
    &= (1-\tfrac\alpha2)F_h^3+\tfrac\alpha2F_h^2 -\tfrac\al4 F_h^0.
\end{split}
\end{equation*}
Then the error estimates in Section \ref{sec:conv} hold also for the correction \eqref{eqn:mu0}.
Numerically, the scheme also achieves a very steady second-order convergence, cf. Table \ref{tab:mu0-error}.

\begin{table}[htb!]
\caption{The $L^2$-error for Examples (b) and (c) at
$t=0.1$, with $h=1/500$, by the correction \eqref{eqn:mu0}.}
\label{tab:mu0-error}
\centering
     \begin{tabular}{|c|c|ccccc|c|}
     \hline
     Case & $\alpha\backslash N$  &$10$ &$20$ &$40$ & $80$ & $160$ &rate \\
     \hline
      &$0.2$   &4.61e-5 &1.08e-5   &2.61e-6 &6.37e-7 &1.53e-7 &$\approx$ 2.05 (2.00)\\
     (b) &$0.5$   &2.20e-4 &5.14e-5 &1.24e-5 &3.03e-6 &7.28e-7 &$\approx$ 2.04 (2.00)\\
      &$0.8$   &8.47e-4 &1.86e-4 &4.49e-5 &1.09e-5 &2.63e-6  &$\approx$ 2.05 (2.00)\\
      \hline
      &$0.2$   &2.16e-6 &5.09e-7   &1.22e-7 &2.92e-8 &6.97e-9 &$\approx$ 2.07 (2.00)\\
     (c) &$0.5$   &1.03e-5 &2.42e-6 &5.82e-7 &1.41e-7 &3.31e-8 &$\approx$ 2.06 (2.00)\\
      &$0.8$   &3.92e-5 &9.07e-6 &2.17e-6 &5.28e-7 &1.24e-7  &$\approx$ 2.05 (2.00)\\
      \hline
     \end{tabular}
\end{table}

\section{Conclusion}
In this work, we have analyzed a fractional Crank-Nicolson scheme for discretizing the
subdiffusion model, which naturally generalizes the classical Crank-Nicolson scheme for the heat equation to the
fractional case. We have developed essential initial corrections to robustify the scheme
which changes only the starting two steps so that
it is easy to implement and meanwhile can achieve the desired second-order
convergence for both smooth and nonsmooth problem data. A
complete convergence analysis was provided, and optimal error estimates in time directly
with respect to the data regularity were established. The accuracy, efficiency and
robustness of the corrected scheme were fully confirmed by extensive numerical experiments, and a
comparative study was included to indicate its competitiveness with existing schemes in terms of the
accuracy and efficiency.

\section*{Acknowledgements}
The research of B. Jin has been partly supported by UK EPSRC EP/M025160/1, and that of B. Li
was partially supported by the startup fund (A/C code: 1-ZE6L) provided by The Hong Kong Polytechnic University.
The research of Z. Zhou is supported in part by the AFOSR  MURI center for Material Failure Prediction through peridynamics
and the ARO MURI Grant W911NF-15-1-0562.

\bibliographystyle{abbrv}
\bibliography{frac_CN}

\begin{thebibliography}{10}

\bibitem{Alikhanov:2015}
A.~A. Alikhanov.
\newblock A new difference scheme for the time fractional diffusion equation.
\newblock {\em J. Comput. Phys.}, 280:424--438, 2015.

\bibitem{ABHN}
W.~Arendt, C.~J. Batty, M.~Hieber, and F.~Neubrander.
\newblock {\em Vector-valued Laplace Transforms and Cauchy Problems, Second
  Edition}.
\newblock Birkh\"auser, second edition, 2011.

\bibitem{CaoZhangKarniadakis:2015}
W.~Cao, Z.~Zhang, and G.~E. Karniadakis.
\newblock Time-splitting schemes for fractional differential equations {I}:
  smooth solutions.
\newblock {\em SIAM J. Sci. Comput.}, 37(4):A1752--A1776, 2015.

\bibitem{CrankNicolson:1947}
J.~Crank and P.~Nicolson.
\newblock A practical method for numerical evaluation of solutions of partial
  differential equations of the heat conduction type.
\newblock {\em Proc. Cambridge Phil. Soc.}, 43(1):50--67, 1947.

\bibitem{CuestaLubichPalencia:2006}
E.~Cuesta, C.~Lubich, and C.~Palencia.
\newblock Convolution quadrature time discretization of fractional
  diffusion-wave equations.
\newblock {\em Math. Comp.}, 75(254):673--696, 2006.

\bibitem{Diethelm:1997}
K.~Diethelm.
\newblock An algorithm for the numerical solution of differential equations of
  fractional order.
\newblock {\em Electron. Trans. Numer. Anal.}, 5:1--6, 1997.

\bibitem{Dimitrov:2014}
Y.~Dimitrov.
\newblock Numerical approximations for fractional differential equations.
\newblock {\em J. Fract. Calc. Appl.}, 5(suppl. 3S):Paper no. 22, 45, 2014.

\bibitem{GaoSunZhang:2014}
G.-H. Gao, Z.-Z. Sun, and H.-W. Zhang.
\newblock A new fractional numerical differentiation formula to approximate the
  {C}aputo fractional derivative and its applications.
\newblock {\em J. Comput. Phys.}, 259:33--50, 2014.

\bibitem{JinLazarovPasciakZhou:IMA2014}
B.~Jin, R.~Lazarov, J.~Pasciak, and Z.~Zhou.
\newblock Error analysis of semidiscrete finite element methods for
  inhomogeneous time-fractional diffusion.
\newblock {\em IMA J. Numer. Anal.}, 35(2):561--582, 2015.

\bibitem{JinLazarovZhou:SIAM2013}
B.~Jin, R.~Lazarov, and Z.~Zhou.
\newblock Error estimates for a semidiscrete finite element method for
  fractional order parabolic equations.
\newblock {\em SIAM J. Numer. Anal.}, 51(1):445--466, 2013.

\bibitem{JinLazarovZhou:2016ima}
B.~Jin, R.~Lazarov, and Z.~Zhou.
\newblock An analysis of the {L}1 scheme for the subdiffusion equation with
  nonsmooth data.
\newblock {\em IMA J. Numer. Anal.}, 36(1):197--221, 2016.

\bibitem{JinLazarovZhou:SISC2016}
B.~Jin, R.~Lazarov, and Z.~Zhou.
\newblock Two fully discrete schemes for fractional diffusion and
  diffusion-wave equations with nonsmooth data.
\newblock {\em SIAM J. Sci. Comput.}, 38(1):A146--A170, 2016.

\bibitem{JLZ}
B.~Jin, B.~Li, and Z.~Zhou.
\newblock Discrete maximal regularity of time-stepping schemes for fractional
  evolution equations.
\newblock {\em preprint, arXiv:1606.07587}, 2016.

\bibitem{KaraaMustapha:2016}
S.~Karaa, K.~Mustapha, and A.~K. Pani.
\newblock A priori estimates of a finite element method for fractional
  diffusion problems by energy arguments.
\newblock pages preprint, arXiv:1605.09104, 2016.

\bibitem{KilbasSrivastavaTrujillo:2006}
A.~A. Kilbas, H.~M. Srivastava, and J.~J. Trujillo.
\newblock {\em Theory and {A}pplications of {F}ractional {D}ifferential
  {E}quations}.
\newblock Elsevier Science B.V., Amsterdam, 2006.

\bibitem{LeMcLeanLamichhane:2016}
K.~N. Le, W.~McLean, and B.~Lamichhane.
\newblock Finite element approximation of a time-fractional diffusion problem
  in a non-convex polygonal domain.
\newblock {\em preprint, arXiv:1602.00040}, 2016.

\bibitem{LinXu:2007}
Y.~Lin and C.~Xu.
\newblock Finite difference/spectral approximations for the time-fractional
  diffusion equation.
\newblock {\em J. Comput. Phys.}, 225(2):1533--1552, 2007.

\bibitem{Lubich:1986}
C.~Lubich.
\newblock Discretized fractional calculus.
\newblock {\em SIAM J. Math. Anal.}, 17(3):704--719, 1986.

\bibitem{Lubich:1988}
C.~Lubich.
\newblock Convolution quadrature and discretized operational calculus. {I}.
\newblock {\em Numer. Math.}, 52(2):129--145, 1988.

\bibitem{LubichSloanThomee:1996}
C.~Lubich, I.~H. Sloan, and V.~Thom{\'e}e.
\newblock Nonsmooth data error estimates for approximations of an evolution
  equation with a positive-type memory term.
\newblock {\em Math. Comp.}, 65(213):1--17, 1996.

\bibitem{LvXu:2016}
C.~Lv and C.~Xu.
\newblock Error analysis of a high order method for time-fractional diffusion
  equations.
\newblock {\em SIAM J. Sci. Comput.}, 38(5):A2699--A2724, 2016.

\bibitem{McLeanMustapha:2009}
W.~McLean and K.~Mustapha.
\newblock Convergence analysis of a discontinuous {G}alerkin method for a
  sub-diffusion equation.
\newblock {\em Numer. Algorithms}, 52(1):69--88, 2009.

\bibitem{Metzler:2014}
R.~Metzler, J.-H. Jeon, A.~G. Cherstvy, and E.~Barkai.
\newblock Anomalous diffusion models and their properties: non-stationarity,
  non-ergodicity, and ageing at the centenary of single particle tracking.
\newblock {\em Phys. Chem. Chem. Phys.}, 16:24128, 37 pp., 2014.

\bibitem{MustaphaAbdallahFurati:2014}
K.~Mustapha, B.~Abdallah, and K.~M. Furati.
\newblock A discontinuous {P}etrov-{G}alerkin method for time-fractional
  diffusion equations.
\newblock {\em SIAM J. Numer. Anal.}, 52(5):2512--2529, 2014.

\bibitem{Podlubny:1988}
I.~Podlubny.
\newblock {\em Fractional {D}ifferential {E}quations}.
\newblock Academic press, 1998.

\bibitem{SakamotoYamamoto:2011}
K.~Sakamoto and M.~Yamamoto.
\newblock Initial value/boundary value problems for fractional diffusion-wave
  equations and applications to some inverse problems.
\newblock {\em J. Math. Anal. Appl.}, 382(1):426--447, 2011.

\bibitem{Stynes:2016}
M.~Stynes.
\newblock Too much regularity may force too much uniqueness.
\newblock {\em preprint, arXiv:1607.01955}, 2016.

\bibitem{Thomee:2006}
V.~Thom{\'e}e.
\newblock {\em Galerkin {F}inite {E}lement {M}ethods for {P}arabolic
  {P}roblems}.
\newblock Springer-Verlag, Berlin, 2nd edition, 2006.

\bibitem{Yuste:2006}
S.~B. Yuste.
\newblock Weighted average finite difference methods for fractional diffusion
  equations.
\newblock {\em J. Comput. Phys.}, 216(1):264--274, 2006.

\bibitem{ZengLiLiuTurner:2015}
F.~Zeng, C.~Li, F.~Liu, and I.~Turner.
\newblock Numerical algorithms for time-fractional subdiffusion equation with
  second-order accuracy.
\newblock {\em SIAM J. Sci. Comput.}, 37(1):A55--A78, 2015.

\end{thebibliography}
\end{document}